\newtheorem{theorem}{Theorem}
\newtheorem{proposition}{Proposition}
\newtheorem{corollary}{Corollary}
\newtheorem{lemma}{Lemma}
\newcommand{\de}[0]{\mathrel{\mathop:}=}
\newcommand{\ed}[0]{=\mathrel{\mathop:}}
\newcommand{\ie}[0]{\mathrm{i}}
\newcommand{\dif}[1]{\mathrm{d}#1}
\newcommand{\R}[0]{\mathbb{R}}
\newcommand{\C}[0]{\mathbb{C}}
\newcommand{\Z}[0]{\mathbb{Z}}
\newcommand{\N}[0]{\mathbb{N}}
\newcommand{\D}[1]{\mathbb{D}\left(#1\right)}
\newcommand{\cD}[1]{\overline{\mathbb{D}}\left(#1\right)}
\DeclareMathOperator{\arsinh}{\mathrm{arsinh}}
\DeclareMathOperator{\Li2}{\mathrm{Li}_2}
\title[Atkinson's formula for the mean square]{Atkinson's formula for the mean square of $\zeta(s)$ with an explicit error term}
\author[A.~Simoni\v{c}]{Aleksander Simoni\v{c}}
\address{School of Science, The University of New South Wales (Canberra), ACT, Australia}
\email{a.simonic@student.adfa.edu.au}
\author[V.~V.~Starichkova]{Valeriia V. Starichkova}
\address{School of Science, The University of New South Wales (Canberra), ACT, Australia}
\email{v.starichkova@adfa.edu.au}
\subjclass[2010]{11M06, 11N37; 11Y35}
\keywords{Riemann zeta-function, Mean square theorems, Explicit results}
\begin{document}

\begin{abstract}
We provide an explicit $O\left(\log^2{T}\right)$-term of the celebrated Atkinson's formula for the error term $E(T)$ of the second power moment of the Riemann zeta-function on the critical line. As an application, we obtain an explicit version of well-known estimate $E(T)\ll_{\varepsilon} T^{\frac{1}{3}+\varepsilon}$.
\end{abstract}

\maketitle
\thispagestyle{empty}

\section{Introduction}

A classical problem in the mean value theory of the Riemann zeta-function $\zeta(s)$ is to find the asymptotic formula for the mean square
\[
I_{\sigma}(T) \de \int_{0}^{T}\left|\zeta\left(\sigma+\ie t\right)\right|^2 \dif{t},
\]
where $\sigma\geq1/2$ and $T\geq 2$. The first result in this direction was obtained independently by Landau and Schnee in 1909, namely that $I_{\sigma}(T)\sim\zeta(2\sigma)T$ for $\sigma>1/2$, which easily follows from classical approximation of $\zeta(s)$ with Dirichlet polynomials, see~\cite[Theorems 4.11 and 7.2]{Titchmarsh}. A much harder problem was to study the asymptotic behaviour of $I_{1/2}(T)$. Hardy and Littlewood proved in 1918 that $I_{1/2}(T)\sim T\log{T}$, and reproved it again in 1923 by using the approximate functional equation for $\zeta(s)$, see~\cite[Theorem 7.3]{Titchmarsh}. Littlewood announced\footnote{In fact Littlewood stated~\eqref{eq:Littlewood} without $-2\gamma$.} without proof in 1922 that
\begin{equation}
\label{eq:Littlewood}
I_{\frac{1}{2}}(T) = T\log{T} - \left(1+\log{2\pi}-2\gamma\right)T + E(T)
\end{equation}
with $E(T)\ll_{\epsilon} T^{\frac{3}{4}+\varepsilon}$, $\varepsilon>0$ and $\gamma$ is the Euler--Mascheroni constant. Six years later, Ingham~\cite{Ingham1928} proved equation~\eqref{eq:Littlewood} with
\begin{equation}
\label{eq:InghamE}
E(T)\ll \sqrt{T}\log{T},
\end{equation}
see also~\cite{TitchmarshCorput3} for an alternative proof,~\cite[Theorem 7.4]{Titchmarsh} for the proof of $E(T)\ll_{\varepsilon} T^{\frac{1}{2}+\varepsilon}$, and also~\cite{Atkinson39} for the proof of $E(T)\ll \sqrt{T}\log^2{T}$. Titchmarsh~\cite{TitchmarshCorput5} was the first who improved the exponent $1/2$. He used van der Corput's theory of exponential sums in combination with the Riemann--Siegel formula instead of the approximate functional equation to obtain $E(T)\ll T^{\frac{5}{12}}\log^2{T}$. Balasubramanian~\cite{Balasub} improved\footnote{In the introduction of~\cite{Balasub} the bound with the exponent $346/1067+\varepsilon$ is stated. However, his proof (see Lemmas 32 and 33) provides the value $27/82+\varepsilon$. See also~\cite{HBFourth} for an alternative approach to the former bound.} Titchmarsh's result to $E(T)\ll_{\varepsilon} T^{\frac{27}{82}+\varepsilon}$, and currently the strongest known estimate $E(T)\ll_{\varepsilon} T^{\frac{1515}{4816}+\varepsilon}$ is due to Bourgain and Watt~\cite{BourgainWatt}. It is conjectured that $E(T)\ll_{\varepsilon} T^{\frac{1}{4}+\varepsilon}$, which is well-supported by omega results, e.g., Good~\cite{Good1977} proved that $E(T)=\Omega\left(T^{\frac{1}{4}}\right)$. For $\Omega_{\pm}$-improvements of this result, as well as for an overview of the mean value theory, see the expository papers~\cite{MatsumotoRecent} and~\cite{Ivic2014}.

In 1849, Dirichlet proved
\begin{equation}
\label{eq:divisorSum}
D(x) \de\sum_{n\leq x}d(n) = x\log{x}+(2\gamma-1)x+\Delta(x)
\end{equation}
with $\Delta(x)\ll\sqrt{x}$, where $x\geq 1$ and $d(n)$ is the number of divisors of $n\in\N$. The estimation of $\Delta(x)$ is known as the \emph{Dirichlet divisor problem}, and conjecture $\Delta(x)\ll_{\varepsilon}x^{\frac{1}{4}+\varepsilon}$ as the \emph{divisor conjecture}. The strongest known result $\Delta(x)\ll_{\varepsilon}x^{\frac{131}{416}+\varepsilon}$ is due to Huxley. Comparing equations~\eqref{eq:Littlewood} and~\eqref{eq:divisorSum} reveals
\[
I_{\frac{1}{2}}(T) = 2\pi D\left(\frac{T}{2\pi}\right) + E(T) - 2\pi\Delta\left(\frac{T}{2\pi}\right).
\]
Deeper connection between $E(T)$ and $\Delta(x)$ was discovered by Atkinson in~\cite{Atkinson}, where he presented a new method which allows to express $E(T)$ in quite exact form. We will state the following version of his theorem.

\begin{theorem}
\label{thm:AtkinsonOriginal}
Let $A'\leq A''$ be any two fixed positive constants and $N$ a positive integer such that $A'T \leq N \leq A''T$. Then
\begin{equation}
\label{eq:GenExprForE}
E(T) = \Sigma_{1}(T,N) + \Sigma_2(T,N) + \mathfrak{E}(T),
\end{equation}
where
\begin{equation}
\label{eq:AtkSigma1}
\Sigma_1(T,N) \de \sqrt{2}\left(\frac{T}{2\pi}\right)^{\frac{1}{4}}\sum_{n\leq N}(-1)^n \frac{d(n)}{n^{\frac{3}{4}}}e(T,n)\cos{f(T,n)},
\end{equation}
\begin{equation}
\label{eq:AtkSigma2}
\Sigma_2(T,N) \de -2\sum_{n\leq \mathcal{Z}(T,N)} \frac{d(n)}{n^{\frac{1}{2}}}\left(\log{\frac{T}{2\pi n}}\right)^{-1}\cos{g(T,n)},
\end{equation}
and
\begin{equation}
\label{eq:AtkError}
\mathfrak{E}(T) \ll \log^2{T},
\end{equation}
with
\begin{gather}
    \mathcal{Z}(T,N) \de \frac{1}{4}\left(\sqrt{N+\frac{1}{2}+\frac{2T}{\pi}}-\sqrt{N+\frac{1}{2}}\right)^2, \label{eq:AtkZ} \\
    e(T,n) \de \left(1+\frac{\pi n}{2T}\right)^{-\frac{1}{4}}\left(\frac{2T}{\pi n}\right)^{-\frac{1}{2}}\left(\arsinh{\sqrt{\frac{\pi n}{2T}}}\right)^{-1}, \label{eq:Atke} \\
    f(T,n) \de 2T\arsinh{\sqrt{\frac{\pi n}{2T}}} + \sqrt{(\pi n)^2+2\pi nT} - \frac{\pi}{4}, \label{eq:Atkf} \\
    g(T,n) \de T\log{\frac{T}{2\pi n}} - T + \frac{\pi}{4}. \nonumber %\label{eq:Atkg}
\end{gather}
\end{theorem}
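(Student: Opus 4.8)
The plan is to follow the method of Atkinson~\cite{Atkinson}, carrying each implied constant explicitly and uniformly in the range $A'T\le N\le A''T$.

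\emph{Reduction to divisor sums.} The first step is a classical identity, obtained from the functional equation of $\zeta(s)$ together with a contour shift — equivalently, from an approximate functional equation for $\zeta(s)^2$ — which writes $\int_0^T|\zeta(1/2+\ie t)|^2\,\dif{t}$ as an explicit main term, a weighted divisor sum $\sum_n d(n)\,\mathcal{I}_n$ with each $\mathcal{I}_n$ an oscillatory integral over a subinterval of $[0,T]$, and a remainder. Integrating the main-term density $\log(t/2\pi)+2\gamma$ gives $T\log T-(1+\log 2\pi-2\gamma)T$; the error committed in truncating the functional equation and in the Mellin/Perron inversion must be bounded explicitly, but it feeds into $\mathfrak{E}(T)$ only at the level $O(\log T)$.

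\emph{Voronoi summation and the saddle point.} An explicit Voronoi summation formula then converts the divisor sums into dual sums over $m$ weighted by Bessel functions $Y_0$ and $K_0$ of argument comparable to $\sqrt{mT}$; inserting their large-argument expansions, with explicit remainders, reduces everything to estimating, for each $m$, an exponential integral $\int\psi(t)\exp(\ie\Phi_m(t))\,\dif{t}$. Evaluating these by an explicit saddle-point method is the heart of the proof: the stationary-phase equation $\Phi_m'(t)=0$ is solved in terms of $\arsinh\sqrt{\pi m/(2T)}$, the value of $\Phi_m$ at the saddle yields the phase $f(T,m)$, and $\Phi_m''$ at the saddle yields the amplitude appearing in~\eqref{eq:AtkSigma1}, so this family of integrals assembles into $\Sigma_1(T,N)$, the sum running up to $N$ because of the truncation of the functional equation. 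A second family of integrals has its stationary point near, or just beyond, an endpoint of the $t$-range; a non-stationary treatment of these produces $\Sigma_2(T,N)$ with phase $g(T,m)$, and $\mathcal{Z}(T,N)$ from~\eqref{eq:AtkZ} is precisely the value of $m$ at which that stationary point crosses the endpoint, so the sum stops there automatically. The accumulated error terms are summed over $m$ by elementary divisor-sum estimates, chiefly $\sum_{m\le cT}d(m)/m\le\frac12\log^2 T+O(\log T)$, and it is this bound that makes the final estimate exactly $\mathfrak{E}(T)\ll\log^2 T$ rather than a higher power of $\log T$.

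\emph{Main obstacle.} The delicate point is to make the saddle-point analysis explicit \emph{uniformly}, in particular through the transition regions where the stationary point approaches an endpoint of integration — around $m\approx\mathcal{Z}(T,N)$ for the $\Sigma_2$-family, and at the upper end of the range for the $\Sigma_1$-family — where the classical second-derivative test degenerates and one needs an explicit uniform stationary-phase lemma, or else a splitting into subranges each controlled by an explicit first- or second-derivative estimate for exponential integrals. A more pedestrian but unavoidable difficulty is to check that the numerous explicit constants coming from the functional equation, the Voronoi formula, the Bessel expansions, and every exponential-integral bound combine into a single admissible constant in~\eqref{eq:AtkError}, which forces an economical choice at each step rather than the crudest available estimate.
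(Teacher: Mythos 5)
Your outline captures the broad shape of Atkinson's argument, but as written it is a plan rather than a proof: the step you yourself call ``the heart of the proof,'' namely an explicit stationary-phase lemma uniform through the transition regions, is exactly what has to be constructed from scratch (in the paper, Proposition~\ref{prop:saddle}, with its shifted contour $\bigcup I_j$ and the three remainders $\mathcal{R}_1,\mathcal{R}_2,\mathcal{R}_3$ whose endpoint term carries the denominator $|f'(a)+k|+\sqrt{f''(a)}$ precisely to survive the degeneration you worry about), and nothing in your sketch indicates how you would obtain it or how its error terms sum to $O(\log^2 T)$. More importantly, one structural claim is wrong: you describe $\Sigma_2$ as the output of a ``non-stationary treatment'' of integrals whose stationary point sits near or beyond an endpoint. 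A non-stationary analysis can only produce error terms; the main term $-2\,d(n)n^{-1/2}\left(\log\frac{T}{2\pi n}\right)^{-1}\cos g(T,n)$ is the genuine saddle-point contribution of a \emph{second} family of exponential integrals (the outer $x$-integral of $\mathcal{E}_4$ after Vorono\"{i}'s formula is applied to $\Delta^{\ast}$; see Proposition~\ref{prop:SecondIntegral}), and the truncation at $\mathcal{Z}(T,N)$ records when that saddle point leaves the range of integration. You have the truncation mechanism right but the nature of the contribution wrong, and this matters because the dominant $\log^2 T$ error arises exactly from the terms $n\approx\mathcal{Z}(T,N)$ where this second saddle point collides with the endpoint, estimated via sums of the shape $\sum_n d(n)/|\mathcal{Z}-n|$ (Lemma~\ref{lem:summation}) rather than via $\sum_{m\le cT}d(m)/m$.

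The placement of Vorono\"{i} summation is also misdescribed. Atkinson's starting point is not the (approximate) functional equation for $\zeta^2$ but the dissection $\zeta(u)\zeta(v)=\zeta(u+v)+\mathfrak{F}(u,v)+\mathfrak{F}(v,u)$ with $\mathfrak{F}(u,v)=\sum\sum n^{-u}(n+m)^{-v}$; Poisson summation (not Vorono\"{i}) converts $\mathfrak{F}$ into the cosine integrals whose saddle points assemble into $\Sigma_1$ for $n\le N$, while Vorono\"{i}'s formula enters only for the tail $n>N$, through $\Delta^{\ast}(x)$ inside $\mathcal{E}_4$ (hence the $Y_1,K_1$ expansion of Lemma~\ref{lem:Voronoi}, not $Y_0,K_0$). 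If you instead apply a Vorono\"{i} summation formula to the whole divisor sum you must explain where the split at $N$ and, especially, the alternating sign $(-1)^n$ in $\Sigma_1$ come from; the truncated Vorono\"{i} formula for $\Delta(x)$ has no such sign, which is precisely why Jutila had to work with $-\Delta(x)+2\Delta(2x)-\frac{1}{2}\Delta(4x)$. Finally, the pieces your sketch omits --- the boundary term $\mathcal{E}_2$, the non-oscillatory term $\mathcal{E}_3$ with its residue computation, and the Stirling remainder $R(T)$ --- are not merely bookkeeping: they are where the explicit bound $|\Delta^{\ast}(x)|\le x^{1/3}\log(ex)$ and second mean-value arguments are actually needed before everything can be absorbed into $\mathfrak{E}(T)$.
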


Equation~\eqref{eq:GenExprForE}, together with its main terms~\eqref{eq:AtkSigma1} and~\eqref{eq:AtkSigma2}, is known as \emph{Atkinson's formula} for $E(T)$. Our formulation of Theorem~\ref{thm:AtkinsonOriginal} differs from the original statement, see\footnote{In the formulation of the main theorem of~\cite{Atkinson} the sign in front of the second sum should be $+$ instead of $-$.}~\cite{Atkinson,Jutila83} or~\cite[p.~442]{Ivic}; there $N$ can be an arbitrary positive real number and $\mathcal{Z}(T,N)$ is defined as $\mathcal{Z}(T,N-1/2)$ from~\eqref{eq:AtkZ}. Both versions are equivalent in the sense that one implies the other with the same bound~\eqref{eq:AtkError} on the error term.

As it is already noticed by Atkinson, trivial estimation of $\Sigma_1$ and $\Sigma_2$ reproves Ingham's result $E(T)\ll \sqrt{T}\log{T}$. However, the first nontrivial applications of Atkinson's formula were found by Heath-Brown almost thirty years later. In~\cite{HBTheMeanValue} he proved
\[
\int_{2}^{T}E^2(t)\dif{t} = \frac{2\zeta^{4}\left(\frac{3}{2}\right)}{3\sqrt{2\pi}\zeta(3)}T^{\frac{3}{2}} + O\left(T^{\frac{5}{4}}\log^{2}{T}\right),
\]
which immediately implies $E(T)=\Omega\left(T^{\frac{1}{4}}\right)$, and in~\cite{HBTheTwelfth} that
\[
\int_{0}^{T} \left|\zeta\left(\frac{1}{2}+\ie t\right)\right|^{12}\dif{t} \ll T^{2}\log^{17}{T}.
\]
His proof of the latter inequality (see~\cite[Section 5]{Ivic2014} for a different approach) depends on a certain estimate of the mean square in short intervals, see~\cite[Lemma 1]{HBTheTwelfth} and also~\cite[Theorem 7.2]{Ivic}, where in the integration process Atkinson's formula is used with the Gaussian exponential factor as a truncating device. Jutila extended Heath-Brown's method in~\cite{Jutila83} where, among other things, he obtained
\begin{equation}
\label{eq:JutilaOrigEst}
E(T) \ll X + T^{\frac{1}{4}}\sup_{\substack{|t-T|\leq X \\ \xi\leq M}}\left\{\left|\sum_{n\leq \xi}(-1)^{n}\frac{d(n)}{n^{\frac{3}{4}}}\cos{f(t,n)}\right|\right\}.
\end{equation}
Here, $T^{\frac{1}{4}}\leq X\leq T^{\frac{1}{3}}$ and $M=TX^{-2}\log^{8}{T}$. Observe that $\Sigma_1$ is the main contributor to the above estimate. It is expected that $\Sigma_2\ll_{\varepsilon} T^{\varepsilon}$ because the Lindel\"{o}f Hypothesis, i.e., $\zeta\left(1/2+\ie t\right)\ll_{\varepsilon} |t|^{\varepsilon}$, guarantees
\[
\sum_{n\leq x}\frac{d(n)}{n^{\frac{1}{2}+\ie T}} \ll_{\varepsilon} T^{\varepsilon}
\]
for $x\ll T$, see~\cite[Section 4]{GGYLindelof} for details. Inequality~\eqref{eq:JutilaOrigEst} further connects the Dirichlet divisor problem with $E(T)$ via the truncated Vorono\"{i} formula (see Section~\ref{sec:Voronoi}), although we need to emphasize that such connection is merely a heuristic evidence for an unproven claim that bounds for $E(T)$ and $\Delta(T)$ should be the same. Taking $X=T^{\frac{1}{3}}$ into~\eqref{eq:JutilaOrigEst} and estimating trivially, we obtain $E(T)\ll T^{\frac{1}{3}}\log^{3}{T}$. Motohashi~\cite[Theorem 9]{MotohashiLectures} refined Jutila's method to obtain
\begin{equation}
\label{eq:JutilaE}
E(T)\ll T^{\frac{1}{3}}\log^{2}{T}.
\end{equation}
Note that we cannot improve the exponent $1/3$ with such a simple process. However, it is possible to use~\eqref{eq:JutilaOrigEst} to prove $E(T)\ll_{\varepsilon} T^{\frac{35}{108}+\varepsilon}$ by applying the delicate method of Kolesnik for handling multiple exponential sums, see~\cite[Lemma 7.3]{Ivic}.

Very little work has been done on explicit estimates in the mean value theory of $\zeta(s)$. Dona, Helfgott and Zuniga Alterman obtained in~\cite[Theorem 4.3]{DHA} several lower and upper explicit bounds for $I_{\sigma}(T)$ where $\sigma\in[0,1]$. In our notation, their result on the second power moment on the critical line is
\[
-2T\sqrt{\log{T}}+0.69283\cdot T+1.242\leq E(T)\leq 2T\sqrt{\log{T}}+24.74642\cdot T+1.243
\]
for $T\geq 4$. The first author~\cite{SimonicEZDE} provided a better upper bound, namely
\[
E(T) \leq 70.26\cdot T^{\frac{3}{4}}\sqrt{\log{\frac{T}{2\pi}}}
\]
for $T\geq 2000$. Dona and Zuniga Alterman~\cite{DonaZA} succeeded to improve this result, showing that
\begin{gather}
\left|E(T)\right| \leq 18.169\sqrt{T}\log^2{T}, \nonumber \\
\left|E(T)\right| \leq 2.552\sqrt{T}\log^2{T} + 8.177\sqrt{T}\log{T} \label{eq:DonaZA}
\end{gather}
for $T\geq 100$ and $T\geq 10^{30}$, respectively, by adapting Atkinson's proof~\cite{Atkinson39}. As remarked in~\cite[p.~4]{DHA}, having an explicit version of Atkinson's formula would give, at least for large $T$, a better estimate. Moreover, in addition to the above applications such a result could be useful in a wider setting, not just in the estimation of $E(T)$. The main purpose of this paper is to provide an explicit bound on the error term $\mathfrak{E}(T)$ from Theorem~\ref{thm:AtkinsonOriginal}.

\begin{theorem}
\label{thm:OurAtkinson}
Let $A'T \leq N \leq A''T$, $N$ be a positive integer and $T\geq T_0$. Then~\eqref{eq:GenExprForE} is true with
\[
\left|\mathfrak{E}(T)\right| \leq a_1\left(A',A'',T_0\right)\log^2{T} + a_2\left(A',A'',T_0\right)\log{T} + a_3\left(A',A'',T_0\right),
\]
where values for $a_1$, $a_2$ and $a_3$ are provided in Table~\ref{tab:thm} for various $A'$, $A''$ and $T_0$.
\end{theorem}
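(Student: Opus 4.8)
The plan is to retrace Atkinson's derivation of Theorem~\ref{thm:AtkinsonOriginal} (as presented in~\cite{Atkinson}; see also~\cite[Chapter~15]{Ivic}) and to replace each occurrence of the $O$-symbol by an explicit inequality valid for all $T\geq T_0$, propagating the resulting constants to the end. Atkinson's argument begins by writing $I_{1/2}(T)$ as a limiting case of a contour integral of $\zeta(u)\zeta(w)$ against a Gamma-function kernel, and then applies his dissection to the off-diagonal part of the Dirichlet series for $\zeta(u)\zeta(w)$: writing the two summation indices as $m$ and $m+r$, the sum over $m$ reproduces $\zeta$-factors while the sum over $r$ produces two families of exponential integrals of the shape $\int g(x)\,e^{iF(T,n,x)}\,dx$ weighted by $d(n)$. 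The residue contribution is the exact polynomial $T\log{T}-(1+\log{2\pi}-2\gamma)T$, so the first task is to make the surrounding line integrals explicit; this requires an explicit form of Stirling's formula for the Gamma kernel, explicit convexity bounds for $\zeta(\sigma+it)$ on the relevant vertical lines, and an explicit treatment of the pole of $\zeta$ at $s=1$. Here I expect to reuse some of the explicit estimates for the mean square developed in~\cite{SimonicEZDE}.

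The core of the proof is the saddle-point evaluation of the exponential integrals. Each of them has a single stationary point, and its leading-order contribution, when the stationary point lies inside the range of integration, reproduces the summands $(-1)^n d(n)n^{-3/4}e(T,n)\cos{f(T,n)}$ of $\Sigma_1$ and $d(n)n^{-1/2}(\log(T/2\pi n))^{-1}\cos{g(T,n)}$ of $\Sigma_2$; the thresholds $N$ and $\mathcal{Z}(T,N)$ mark precisely the ranges of $n$ for which this happens, the first family being, up to its normalising factor, of the same shape as the truncated Vorono\"{i} formula recalled in Section~\ref{sec:Voronoi}. I would isolate an explicit saddle-point lemma --- an explicit version of the classical stationary-phase results of Atkinson and of~\cite[Chapter~2]{Ivic} for $\int_a^b g(x)e^{iF(x)}\,dx$ with $F''$ of constant sign, together with explicit first- and second-derivative tests for the endpoint and tail segments --- and apply it with explicit majorants for $g$, $F$ and their first few derivatives. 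The error terms produced this way have the shape $d(n)$ times a fixed negative power of $n$ times a power of $T$; when the stationary point sits close to an endpoint there are additional transitional contributions, all of which must be bounded explicitly and uniformly in $n$ and in the parameters $A'$ and $A''$.

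Summing the saddle-point errors over $n$ against $d(n)$ is what produces the three terms of $\mathfrak{E}(T)$. The dominant $\log^2{T}$ contribution comes from the $\Sigma_2$-range, where $\mathcal{Z}(T,N)\asymp T$ and the per-term error is of size $d(n)n^{-1}$ up to bounded factors, so that an explicit bound $\sum_{n\leq x}d(n)n^{-1}\leq\tfrac12\log^2{x}+c_1\log{x}+c_2$ feeds in directly; the $\log{T}$ and constant contributions come from the remaining saddle-point errors, from the transitional ranges around $N$ and $\mathcal{Z}(T,N)$ (treated by trivial estimation over a short range together with an explicit first-derivative bound), and from the line integrals of the first step, using also an explicit bound $\sum_{n\leq x}d(n)\leq x\log{x}+c_3 x$. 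Collecting everything yields an inequality $|\mathfrak{E}(T)|\leq a_1\log^2{T}+a_2\log{T}+a_3$ in which the $a_i$ are fully explicit but still depend on several free parameters introduced along the way: the abscissae of the contours, the points at which the $n$-ranges are split, the cut-off between the first-derivative and saddle-point regimes. The final step is to specialise $A'$, $A''$ and $T_0$ to the rows of Table~\ref{tab:thm} and to optimise these parameters numerically, which gives the tabulated constants.

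The main obstacle is the explicit saddle-point analysis. The classical stationary-phase estimates are stated only up to unspecified constants, and producing a version whose constants are simultaneously correct and not hopelessly pessimistic --- in particular uniformly as the stationary point approaches an endpoint of integration, where the standard asymptotic expansion degenerates and transitional (Airy-type) behaviour takes over --- is genuinely delicate. A close second obstacle is the bookkeeping: Atkinson's proof branches into many cases, each contributing its own $O$-term, and every one of them must be turned into an explicit inequality valid for all $T\geq T_0$ with its constant carried through; keeping the dependence on $A'$ and $A''$ in check throughout, and preventing a blow-up of the final constants, is precisely what makes the numerical part of the statement nontrivial.
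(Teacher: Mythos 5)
Your plan follows essentially the same route as the paper: make Atkinson's original derivation explicit (explicit Stirling for the Gamma-kernel term, an explicit Vorono\"{i}-type input, an explicit saddle-point lemma applied to the two families of exponential integrals, explicit divisor-sum estimates producing the $\log^2$ contribution, and a final numerical optimisation of the free parameters for each choice of $A'$, $A''$, $T_0$), which is exactly how the paper proceeds in Sections~\ref{sec:RevAtk}--\ref{sec:proof}. The only notable deviations are inessential: no convexity bounds for $\zeta$ are needed in the first (exact) part of the argument, and no Airy-type transitional analysis is required, because the explicit saddle-point lemma (Proposition~\ref{prop:saddle}) already carries endpoint terms of first-derivative type with denominators $\left|f'(a)+k\right|+\sqrt{f''(a)}$ that remain valid uniformly as the stationary point approaches an endpoint.
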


\begin{table}[h]
   \centering
\begin{tabular}{lllllll}
\toprule
$A'$ & $A''$ & $\alpha_0$ & $a_1\left(A',A'',T_0\right)$ & $a_2\left(A',A'',T_0\right)$ & \multicolumn{2}{l}{$a_3\left(A',A'',T_0\right)$} \\
\midrule
$0.9$ & $1.1$ & $0.05$ & $95$ & $275$ & $2.4\cdot 10^{20}$ & $3.5\cdot10^{12}$ \\
 & & & & & $7.3\cdot10^{18}$ & $8.5\cdot10^{10}$ \\
 & & & & & $2.1\cdot10^{17}$ & $2.1\cdot10^{9}$ \\
 & & & & & $5.5\cdot10^{15}$ & $4.9\cdot10^{7}$ \\
 & & & & & $1.4\cdot10^{14}$ & $825$ \\
\midrule
$0.5$ & $1.5$ & $0.04$ & $217$ & $814$ & $3.4\cdot 10^{20}$ & $5.0\cdot10^{12}$ \\
 & & & & & $1.1\cdot10^{19}$ & $1.3\cdot10^{11}$ \\
 & & & & & $3.0\cdot10^{17}$ & $3.0\cdot10^{9}$ \\
 & & & & & $7.8\cdot10^{15}$ & $7.1\cdot10^{7}$ \\
 & & & & & $2.0\cdot10^{14}$ & $2194$ \\
\midrule
$0.1$ & $1.9$ & $0.02$ & $2444$ & $13197$ & $2.0\cdot 10^{21}$ & $3.1\cdot10^{13}$ \\
 & & & & & $6.3\cdot10^{19}$ & $7.5\cdot10^{11}$ \\
 & & & & & $1.8\cdot10^{18}$ & $1.8\cdot10^{10}$ \\
 & & & & & $4.8\cdot10^{16}$ & $4.3\cdot10^{8}$ \\
 & & & & & $1.3\cdot10^{15}$ & $33903$ \\
\bottomrule
\end{tabular}
   \caption{Values for $a_1$, $a_2$ and $a_3$ for particular parameters $A'$, $A''$ and $T_0$, where $T_0=10^{20},10^{30},\ldots,10^{100},10^{1000}$. Numbers in the last two columns are written in this order from left up -- left down to right up -- right down.}
   \label{tab:thm}
\end{table}

Our choice for $A'$ and $A''$ is reasonable because in all of the previously mentioned applications of Atkinson's formula $N=\lfloor{T}\rfloor$ is taken. The values in the last two columns in Table~\ref{tab:thm} are large due to large error terms in the saddle point lemma (Proposition~\ref{prop:saddle}). Also, our proof\footnote{There exist three different proofs of Theorem~\ref{thm:AtkinsonOriginal}. Motohashi~\cite{MotohashiLectures} used his version of the approximate functional equation for $\zeta^{2}(s)$ to prove~\eqref{eq:AtkError} with even better error term $O\left(\log{T}\right)$, while Jutila~\cite{Jutila97} and Lukkarinen~\cite{Lukkarinen} derived~\eqref{eq:AtkError} via the Laplace transform.} strongly relies on Atkinson's original approach. As an application of Theorem~\ref{thm:OurAtkinson}, we make~\eqref{eq:JutilaE} explicit by following Jutila's proof of inequality~\eqref{eq:JutilaOrigEst}, see Section~\ref{sec:Coro}. However, careful examination of the log-terms in his approach allows us to improve~\eqref{eq:JutilaE} to
\begin{equation}
\label{eq:ourJutila}
E(T)\ll_{\varepsilon} T^{\frac{1}{3}}\log^{\frac{3}{2}+\varepsilon}{T}
\end{equation}
with fully explicit constants which depend on $\varepsilon>0$. For the sake of simplicity we set $\varepsilon=1/6$ to obtain the following.

\begin{corollary}
\label{cor:main}
We have
\begin{equation}
\label{eq:ExplJut}
\left|E(T)\right| \leq J\left(T_0\right) T^{\frac{1}{3}}\log^{\frac{5}{3}}{T}
\end{equation}
for $T\geq 1.1T_0$, where values for $J\left(T_0\right)$ are given by Table~\ref{tab:cor}.
\end{corollary}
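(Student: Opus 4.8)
The plan is to derive Corollary~\ref{cor:main} from the explicit Atkinson formula of Theorem~\ref{thm:OurAtkinson} by making Jutila's reduction~\eqref{eq:JutilaOrigEst} fully quantitative. First I would fix a parameter $X$ in the range $T^{1/4}\le X\le T^{1/3}$ and, following Motohashi's and Jutila's idea, integrate~\eqref{eq:GenExprForE} against a suitable smooth nonnegative weight supported in an interval of length roughly $X$ around $T$ (a Gaussian, or the Fej\'er-type kernel Jutila uses), normalised to have unit mass. Averaging $E(t)$ over such an interval and using the elementary bound $|E(t+h)-E(t)|\ll h\log T+\text{(something small)}$ — which itself follows from $I_{1/2}$ being increasing with a known derivative — lets one replace $E(T)$ by $\frac1{\text{(mass)}}\int E(t)w(t)\,dt$ up to an error $O(X\log T)$. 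This is where the term $X$ in~\eqref{eq:JutilaOrigEst} comes from.

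Next I would insert the decomposition $E=\Sigma_1(\cdot,N)+\Sigma_2(\cdot,N)+\mathfrak E$ with $N=\lfloor t\rfloor$ (or a fixed $N\asymp T$) into that averaged integral. The error term contributes $O(\log^2 T)$ uniformly by Theorem~\ref{thm:OurAtkinson}, with the explicit constants $a_1,a_2,a_3$ read off from Table~\ref{tab:thm} for the choice $A'=0.9$, $A''=1.1$ (consistent with $N=\lfloor t\rfloor$ and $|t-T|\le X\le T^{1/3}$, so that $A'T\le N\le A''T$ holds once $T\ge T_0$); this is exactly why the hypothesis in the corollary is stated as $T\ge 1.1\,T_0$. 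The sum $\Sigma_2$ is estimated directly: it is a Dirichlet-type sum $\sum_{n\le \mathcal Z}d(n)n^{-1/2}(\log(T/2\pi n))^{-1}$, and partial summation against $D(x)$ from~\eqref{eq:divisorSum} together with the trivial bound $(\log(T/2\pi n))^{-1}\ll \log T$ for $n\le \mathcal Z(T,N)\ll T$ gives $\Sigma_2\ll \sqrt T\log T$ — but after the short-interval averaging with a well-chosen kernel one gains cancellation and reduces the contribution of $\Sigma_2$ to something of lower order, or at worst $O(T^{1/4}\log^c T)$, which is dominated by the final bound. The heart of the matter is $\Sigma_1$: after averaging, one is left with (a constant times) $T^{1/4}$ times the supremum over $|t-T|\le X$ and $\xi\le M:=TX^{-2}\log^8 T$ of the exponential sum $\bigl|\sum_{n\le\xi}(-1)^n d(n)n^{-3/4}\cos f(t,n)\bigr|$, which is precisely~\eqref{eq:JutilaOrigEst}; I would carry out Jutila's manipulation (splitting into dyadic blocks, applying the truncated Vorono\"i formula of Section~\ref{sec:Voronoi} to $f$, and Cauchy--Schwarz) keeping every logarithmic power explicit.

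To get~\eqref{eq:ourJutila} rather than merely~\eqref{eq:JutilaE}, the refinement is to not estimate the exponential sum trivially by $\sum_{n\le\xi}d(n)n^{-3/4}\ll \xi^{1/4}\log\xi$, but to exploit the oscillation of $\cos f(t,n)$ via a first- or second-derivative test (van der Corput), which on a dyadic block $n\asymp K\le M$ saves a power of $K$ and leaves only $O(\log^{1/2+\varepsilon})$ losses from the divisor function on average (by Cauchy--Schwarz and $\sum_{n\le x}d(n)^2\ll x\log^3 x$, one power of $\log$ becomes a half-power). Choosing $X=T^{1/3}$ balances the $X\log T$ error against the $T^{1/4}\cdot(\text{sum})$ term; the sum is then $\ll (T/X^2)^{1/4}\log^{c}T=T^{1/12}\log^c T$, so $T^{1/4}\cdot T^{1/12}\log^c T=T^{1/3}\log^c T$ with $c=\tfrac32+\varepsilon$ after the Cauchy--Schwarz saving. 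Specialising $\varepsilon=1/6$ gives $\log^{5/3}T$ and, tracking all constants through the averaging kernel, through Table~\ref{tab:thm}, through the Vorono\"i truncation, and through the final Cauchy--Schwarz, one collects a single explicit constant $J(T_0)$ depending only on the starting point; the tabulated values of $J(T_0)$ decrease as $T_0$ grows because the subordinate $\log T$ and constant terms in $\mathfrak E$, relative to $\log^{5/3}T$, shrink. I expect the main obstacle to be bookkeeping: every implied constant in Jutila's argument — in the short-interval mean-value lemma, in the truncated Vorono\"i formula, in the dyadic decomposition, and in the derivative tests — must be made explicit and then combined without the estimates blowing up, and one must check that the admissible ranges ($T^{1/4}\le X\le T^{1/3}$, $\xi\le M$, $A'T\le N\le A''T$) are all compatible for $T\ge 1.1T_0$ with $T_0$ as large as $10^{20}$.
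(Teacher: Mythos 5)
Your overall architecture coincides with the paper's: sandwich $E(T)$ between $E(t_1)$ and $E(t_2)$ via monotonicity of $I_{1/2}$, average against a Gaussian over a window of length $\approx T^{-1/6}$ in the variable $\sqrt{t}$ (hence $\approx T^{1/3}$ in $t$), insert the explicit Atkinson formula with $N=\lfloor T\rfloor$ and $(A',A'')=(0.9,1.1)$, and let the Gaussian damp $\Sigma_1$ beyond $n>M$ while annihilating the main term of $\Sigma_2$. However, your key refinement step — the one meant to turn $\log^{2}T$ into $\log^{3/2+\varepsilon}T$ — contains a genuine gap. You propose to exploit the oscillation of $\cos f(t,n)$ on dyadic blocks via van der Corput derivative tests together with Cauchy--Schwarz and $\sum_{n\le x}d(n)^2\ll x\log^{3}x$. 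First, derivative tests do not apply directly to sums weighted by $d(n)$; opening $d(n)=\sum_{ab=n}1$ leads to the multiple exponential sums of Kolesnik's method, a far more delicate enterprise (and the route one would need to go \emph{below} the exponent $1/3$, which the paper explicitly says cannot be done by this simple process). Second, Cauchy--Schwarz applied to $\sum_{n\le M}d(n)n^{-3/4}$ yields $\bigl(\sum_{n\le M}d(n)^2n^{-3/4}\bigr)^{1/2}\bigl(\sum_{n\le M}n^{-3/4}\bigr)^{1/2}\ll M^{1/4}\log^{3/2}M$, which is \emph{worse} than the trivial $M^{1/4}\log M$. Third, your own final numerology $T^{1/4}\cdot(T/X^{2})^{1/4}\log^{c}T$ is exactly the trivial estimate, contradicting the claimed saving of a power of $K$ on each block.

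The actual source of the logarithmic saving is purely the choice of kernel parameters: the Gaussian weight makes $G^{-1}\int e^{\widehat{f}(x,n,u)}\,\mathrm{d}u$ super-polynomially small once $n>M$, so one may take $M=G^{-2}\log^{1+2\varepsilon}T=T^{1/3}\log^{2+4\varepsilon}T$ instead of Jutila's $T^{1/3}\log^{8}T$; the trivial bound $\sum_{n\le M}d(n)n^{-3/4}\le 4M^{1/4}\log M$ then already gives $T^{1/4}\cdot T^{1/12}\log^{1/2+\varepsilon}T\cdot\log T=T^{1/3}\log^{3/2+\varepsilon}T$, and balancing this against the $Y\log T$ error from the sandwich step forces $\mu_1=1/2+\varepsilon$, whence $\varepsilon=1/6$ produces the exponent $5/3$. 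No cancellation in the exponential sum beyond the Gaussian damping is used or needed. You would also need to control the boundary contribution arising because the summation length $\mathcal{Z}\bigl((x+u)^2,\lfloor T\rfloor\bigr)$ of $\Sigma_2$ moves with $u$ (the term $\Sigma_{25}$ in the paper, which contributes an absolute constant); your sketch does not address this.
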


A precise version of Corollary~\ref{cor:main} is given in Theorem~\ref{thm:Jutila}. Observe that the last estimate improves~\eqref{eq:DonaZA} for $T\geq 2.4\cdot 10^{30}$. With the method presented here, the limit of $J\left(T_0\right)$ when $T_0\to\infty$ is around $2.2$.

\begin{table}[h]
   \centering
\begin{tabular}{lllllllll}
\toprule
$T_0$ & $10^{30}$ & $10^{40}$ & $10^{50}$ & $10^{60}$ & $10^{70}$ & $10^{80}$ & $10^{90}$ & $10^{100}$ \\
\midrule
$J\left(T_0\right)$ & $1.272\cdot10^6$ & $14.366$ & $3.194$ & $3.09$ & $3.011$ & $2.95$ & $2.9$ & $2.856$  \\
\bottomrule
\end{tabular}
   \caption{Values for $J\left(T_0\right)$ from Corollary~\ref{cor:main}.}
   \label{tab:cor}
\end{table}

There exist several extensions of Atkinson's formula: Matsumoto and Meurman proved in~\cite{MatsumotoMeurman3} the analogue of~\eqref{eq:GenExprForE} for $\sigma\in(1/2,1)$; Jutila extended~\eqref{eq:GenExprForE} to a primitive of Hardy's $Z$-function, see~\cite[Theorem 8.2]{IvicHardy}; and Motohashi in the spectral theory of the Riemann zeta-function, see~\cite[Theorem 4.1]{MotohashiSpectral}. Because the former two extensions rely on the saddle point technique and are in this view closer to the original proof, our results may prove useful in future investigations in the mean value theory of $\zeta(s)$ and $Z(t)$.

The outline of this paper is as follows. In Section~\ref{sec:RevAtk} we give a concise review of the proof of Theorem~\ref{thm:AtkinsonOriginal}. Vorono\"{i}'s formula with an explicit error term (Lemma~\ref{lem:Voronoi}) is given in Section~\ref{sec:Voronoi}, and the saddle point lemma (Proposition~\ref{prop:saddle}) is proved in Section~\ref{sec:saddle}. Proposition~\ref{prop:saddle} is used in Section~\ref{sec:TwoIntegrals} to obtain explicit bounds for two integrals (Propositions~\ref{prop:FirstIntegral} and~\ref{prop:SecondIntegral}), which are then applied in Section~\ref{sec:proof} to provide the proof of Theorem~\ref{thm:OurAtkinson}. Finally, the proof of Corollary~\ref{cor:main} is given in Section~\ref{sec:Coro}.

\section{Quick review of the proof of Atkinson's formula}
\label{sec:RevAtk}

The proof of Theorem~\ref{thm:AtkinsonOriginal} consists of two main parts: derivation of the exact expression for $E(T)$ in terms of four definite integrals and negligible error term, see equation~\eqref{eq:MainForE}, and delicate estimation of these integrals by the saddle point technique (Proposition~\ref{prop:saddle}). Because our aim is to obtain an explicit error term in Atkinson's formula, it is clear that the second part of the proof is of main interest in the present paper, and thus it will be completely presented in the forthcoming sections. However, for the sake of completeness, we will provide the principal ideas from the first part of the proof. The reader is advised to consult details in~\cite[pp.~443--451]{Ivic} or~\cite{Atkinson}, see also~\cite{Ivic2014} for a similar review.

The proof begins with the simple equation\footnote{In spectral theory setting also known as the Atkinson dissection, see~\cite[p.~151]{MotohashiSpectral}.}
\begin{gather}
\zeta(u)\zeta(v) = \sum_{n=1}^{\infty}\sum_{m=1}^{\infty}\frac{1}{n^{u}m^{v}} = \zeta(u+v) + \mathfrak{F}(u,v) + \mathfrak{F}(v,u), \label{eq:AtkinsonStart} \\
\mathfrak{F}(u,v) \de \sum_{n=1}^{\infty}\sum_{m=1}^{\infty}\frac{1}{n^{u}(n+m)^{v}}, \nonumber
\end{gather}
which is valid for $\Re\{u\}>1$ and $\Re\{v\}>1$. It is clear that in this domain the function $\mathfrak{F}(u,v)$ is holomorphic in both variables. By means of partial summation it is possible to obtain analytic continuation of $\mathfrak{F}(u,v)$, which will be denoted by the same letter, to a larger domain
\[
\left\{(u,v)\in\C^2 \colon \Re\{u+v\}>0,u+v\notin\{1,2\},v\neq 1\right\}.
\]
Define
\begin{equation}
\label{eq:frakG}
\mathfrak{G}(u,v) \de \mathfrak{F}(u,v) - \frac{\Gamma(1-u)\Gamma(u+v-1)}{\Gamma(v)}\zeta(u+v-1).
\end{equation}
Then $\mathfrak{G}(u,v)$ is a holomorphic function on the domain
\[
\left\{(u,v)\in\C^2 \colon \Re\{u+v\}>0,u+v\notin\{1,2\},u\notin\N,v\notin\Z_{\leq 1} \right\}.
\]
After combining~\eqref{eq:AtkinsonStart} and~\eqref{eq:frakG}, we obtain the equality
\begin{multline*}
\zeta(u)\zeta(v) = \zeta(u+v) + \left(\frac{\Gamma(1-u)}{\Gamma(v)}+\frac{\Gamma(1-v)}{\Gamma(u)}\right) \times \\
\times\Gamma(u+v-1)\zeta(u+v-1) + \mathfrak{G}(u,v) + \mathfrak{G}(v,u),
\end{multline*}
valid for $0<\Re\{u\}<1$, $0<\Re\{v\}<1$ and $u+v\neq 1$. Taking $0<\Re\{u\}<1$ and setting $v=1-u+\delta$ for sufficiently small $|\delta|$, it can be shown that $\lim_{\delta\to0}\mathfrak{G}(u,1-u+\delta)$ and $\lim_{\delta\to0}\mathfrak{G}(1-u+\delta,u)$ exist, and
\begin{multline}
\label{eq:zetazeta}
\zeta(u)\zeta(1-u) = \frac{1}{2}\left(\frac{\Gamma'}{\Gamma}(1-u)+\frac{\Gamma'}{\Gamma}(u)\right)+2\gamma-\log{2\pi} \\
+\lim_{\delta\to0}\mathfrak{G}(u,1-u+\delta)+\lim_{\delta\to0}\mathfrak{G}(1-u+\delta,u).
\end{multline}
Therefore,~\eqref{eq:zetazeta} is true for all $0<\Re\{u\}<1$.

The Poisson summation formula implies
\begin{equation}
\label{eq:poisson}
\mathfrak{G}(u,v) = 2\sum_{m=1}^{\infty}m^{1-u-v}\sum_{k=1}^{\infty}\int_{0}^{\infty}y^{-u}(1+y)^{-v}\cos{\left(2\pi kmy\right)}\dif{y}
\end{equation}
for $\Re\{u\}<0$ and $\Re\{u+v\}>2$. However, it is possible to show further that function on the right-hand side of~\eqref{eq:poisson} is holomorphic on the domain
\[
\left\{(u,v)\in\C^2 \colon \Re\{u+v\}>0, \Re\{u\}<0, \Re\{v\}>1\right\}
\]
since the double series in~\eqref{eq:poisson} is absolutely convergent for values in this domain. Moreover, we can obtain
\[
\mathfrak{G}(u,v) = 2\sum_{n=1}^{\infty}\sum_{d\mid n}d^{1-u-v}\int_{0}^{\infty}y^{-u}(1+y)^{-v}\cos{\left(2\pi ny\right)}\dif{y}
\]
by grouping terms with $km=n$ in~\eqref{eq:poisson} together. Therefore,
\[
\mathfrak{G}(u,1-u) = \sum_{n=1}^{\infty}d(n)\mathfrak{H}(u,n), \quad \mathfrak{H}(u,x) \de 2\int_{0}^{\infty}y^{-u}(1+y)^{u-1}\cos{\left(2\pi xy\right)}\dif{y}
\]
for $\Re\{u\}<0$. The next step is to obtain analytic continuation of $\mathfrak{G}(u,1-u)$ to some domain which includes the critical line $\Re\{u\}=1/2$.

Let $N$ be a positive integer. Taking~\eqref{eq:divisorSum} and writing sums with Stieltjes integrals gives
\begin{flalign*}
\mathfrak{G}(u,1-u) &= \left(\sum_{n\leq N}+\sum_{n>N}\right)d(n)\mathfrak{H}(u,n) = \mathfrak{G}_1(u) + \int_{N+\frac{1}{2}}^{\infty}\mathfrak{H}(u,x)\dif{D(x)} \\
&= \mathfrak{G}_1(u) + \mathfrak{G}_3(u) + \int_{N+\frac{1}{2}}^{\infty}\mathfrak{H}(u,x)\dif{\Delta(x)} \\
&= \mathfrak{G}_1(u) - \mathfrak{G}_2(u) + \mathfrak{G}_3(u) - \mathfrak{G}_4(u),
\end{flalign*}
where
\begin{gather*}
\mathfrak{G}_1(u) \de \sum_{n\leq N}d(n)\mathfrak{H}(u,n), \quad \mathfrak{G}_2(u) \de \Delta^{\ast}\left(N+\frac{1}{2}\right)\mathfrak{H}\left(u,N+\frac{1}{2}\right), \\
\mathfrak{G}_3(u) \de \int_{N+\frac{1}{2}}^{\infty}\left(\log{x}+2\gamma\right)\mathfrak{H}(u,x)\dif{x}, \quad \mathfrak{G}_4(u) \de \int_{N+\frac{1}{2}}^{\infty} \Delta^{\ast}(x)\frac{\partial{\mathfrak{H}(u,x)}}{\partial{x}}\dif{x}.
\end{gather*}
Here, $\Delta^{\ast}(x)$ is defined by~\eqref{eq:DeltaStar}. Replacing $\Delta(x)$ with $\Delta^{\ast}(x)$ is justified because $\dif{\Delta(x)}=\dif{\Delta^{\ast}(x)}$ for $x\notin\N$, see~\eqref{eq:deltadiference}. Detailed analysis of the above functions shows that $\mathfrak{G}_1(u)$ and $\mathfrak{G}_2(u)$ admit analytic continuation to $\Re\{u\}<1$, $\mathfrak{G}_3(u)$ can be analytically continued to $\Re\{u\}<1, u\neq0$, while a bound like~\eqref{eq:explicitDeltaStar} guarantees analytic continuation of $\mathfrak{G}_4(u)$ to $\Re\{u\}<2/3$. This allows us to take $u=1/2+\ie t$ in~\eqref{eq:zetazeta}, thus deriving
\begin{multline*}
\left|\zeta\left(\frac{1}{2}+\ie t\right)\right|^2 = \frac{1}{2}\left(\frac{\Gamma'}{\Gamma}\left(\frac{1}{2}+\ie t\right)+\frac{\Gamma'}{\Gamma}\left(\frac{1}{2}-\ie t\right)\right)+2\gamma-\log{2\pi} \\
+ \sum_{j=1}^{4}(-1)^{j+1}\left(\mathfrak{G}_j\left(\frac{1}{2}+\ie t\right) + \mathfrak{G}_j\left(\frac{1}{2}-\ie t\right)\right).
\end{multline*}
Integrating the previous equality over $t$ from $0$ to $T>0$ implies
\[
I_{\frac{1}{2}}(T) = \frac{1}{2\ie}\log{\frac{\Gamma\left(\frac{1}{2}+\ie T\right)}{\Gamma\left(\frac{1}{2}-\ie T\right)}} + \left(2\gamma-\log{2\pi}\right)T + \sum_{j=1}^{4}(-1)^{j+1}\mathcal{E}_{j},
\]
where
\[
\mathcal{E}_{j} \de \int_{-T}^{T}\mathfrak{G}_j\left(\frac{1}{2}+\ie t\right)\dif{t} = -\ie\int_{\frac{1}{2}-\ie T}^{\frac{1}{2}+\ie T}\mathfrak{G}_j\left(u\right)\dif{u}.
\]
Using Stirling's formula for $\log{\Gamma(z)}$ with an explicit error term, see~\cite[p.~294]{Olver}, together with the exact representations for holomorphic extensions of $\mathfrak{G}_j(u)$, we can write
\begin{equation}
\label{eq:MainForE}
E(T) = \mathcal{E}_1 - \mathcal{E}_2 + \mathcal{E}_3 - \mathcal{E}_4 + R(T),
\end{equation}
where
\begin{equation*}
\mathcal{E}_1 \de 4\sum_{n\leq N} d(n)\int_{0}^{\infty} \frac{\sin{\left(T\log{\frac{1+y}{y}}\right)}\cos{\left(2\pi ny\right)}}{\sqrt{y(1+y)}\log{\frac{1+y}{y}}}\dif{y},
\end{equation*}
\begin{equation*}
\mathcal{E}_2 \de 4\Delta^{\ast}\left(N+\frac{1}{2}\right)\int_{0}^{\infty} \frac{\sin{\left(T\log{\frac{1+y}{y}}\right)}\cos{\left(2\pi\left(N+\frac{1}{2}\right)y\right)}}{\sqrt{y(1+y)}\log{\frac{1+y}{y}}}\dif{y},
\end{equation*}
\begin{multline*}
\mathcal{E}_3 \de -\frac{2}{\pi}\left(\log{\left(N+\frac{1}{2}\right)}+2\gamma\right)\int_{0}^{\infty}\frac{\sin{\left(T\log{\frac{1+y}{y}}\right)}\sin{\left(2\pi\left(N+\frac{1}{2}\right)y\right)}}{y\sqrt{y(1+y)}\log{\frac{1+y}{y}}}\dif{y} \\
+ \frac{1}{\pi\ie}\int_{0}^{\infty}\frac{\sin{\left(2\pi\left(N+\frac{1}{2}\right)y\right)}}{y}\int_{\frac{1}{2}-\ie T}^{\frac{1}{2}+\ie T}\frac{1}{u}\left(\frac{1+y}{y}\right)^{u}\dif{u}\dif{y},
\end{multline*}
and
\begin{multline*}
\mathcal{E}_4 \de 4\int_{N+\frac{1}{2}}^{\infty} \frac{\Delta^{\ast}(x)}{x} \int_{0}^{\infty} \frac{\cos{\left(2\pi xy\right)}}{(1+y)\sqrt{y(1+y)}\log{\frac{1+y}{y}}}\times \\
\times\left(T\cos{\left(T\log{\frac{1+y}{y}}\right)}-\left(\frac{1}{2}+\log^{-1}{\frac{1+y}{y}}\right)\sin{\left(T\log{\frac{1+y}{y}}\right)}\right)\dif{y}\dif{x},
\end{multline*}
together with
\begin{equation}
\label{eq:RStirling}
\left|R(T)\right| \leq \frac{T}{2}\log{\left(1+\frac{1}{4T^2}\right)} + \frac{1}{12T} + \frac{1}{90T^3}.
\end{equation}

This concludes the first part of the proof. From now on the full attention is dedicated to technically difficult estimations of the terms $\mathcal{E}_j$.

\section{On Vorono\"{i}'s formula}
\label{sec:Voronoi}

Turning back to the Dirichlet divisor problem, by the work of Berkane, Bordell\`{e}s and Ramar\'{e} we know that $\left|\Delta(x)\right|\leq\sqrt{x}$ for $x\geq 1$, and $\left|\Delta(x)\right|\leq0.764 x^{\frac{1}{3}}\log{x}$ for $x\geq5$, see\footnote{In~\cite[Theorem 1.2]{BBR} it is stated that the latter inequality is true and sharp for $x\geq9995$, but numerical computation confirms that it is true for all $x\geq 5$.}~\cite[Theorems 1.1 and 1.2]{BBR}. Let
\begin{equation}
\label{eq:DeltaStar}
\Delta^{\ast}(x) \de  \sideset{}{'}\sum_{n\leq x}d(n) - x\left(\log{x}+2\gamma-1\right) - \frac{1}{4},
\end{equation}
where $'$ means that in the case when $x\in\N$ we take $d(x)/2$ instead of $d(x)$. Observe that
\begin{equation}
\label{eq:deltadiference}
\Delta(x)-\Delta^{\ast}(x)=\frac{1}{4} \quad \textrm{for} \quad x\notin\N.
\end{equation}
Using~\eqref{eq:deltadiference} and the previously stated explicit bounds for $\left|\Delta(x)\right|$ we can show that
\begin{equation}
\label{eq:explicitDeltaStar}
\left|\Delta^{\ast}(x)\right| \leq x^{\frac{1}{3}}\log{\left(ex\right)}
\end{equation}
for $x\geq 1$. We will use~\eqref{eq:explicitDeltaStar} in estimating the integrals $\mathcal{E}_2$ and $\mathcal{E}_4$. The resulting error terms are $o(1)$ when $T\to\infty$, therefore, having a hypothetical explicit version of the divisor conjecture would not significantly improve our constants from Theorem~\ref{thm:OurAtkinson}.

G.~F.~Vorono\"{i} proved in 1904 that
\begin{equation}
\label{eq:Voronoi}
\Delta^{\ast}(x) = -\frac{2\sqrt{x}}{\pi}\sum_{n=1}^{\infty}\frac{d(n)}{\sqrt{n}}\left(K_1\left(4\pi\sqrt{nx}\right) + \frac{\pi}{2}Y_1\left(4\pi\sqrt{nx}\right)\right),
\end{equation}
where $Y_1(u)$ and $K_1(u)$ are Bessel and modified Bessel functions of the second kind, respectively, see~\cite[Chapter 3]{Ivic}. Note that this infinite series converges uniformly on every compact subset of $[1,\infty)$ which does not contain an integer, and convergence is bounded on every bounded subset of the same interval. The next lemma gives an explicit error term for $\Delta^{\ast}(x)$ when Bessel functions are replaced by trigonometric functions.

\begin{lemma}
\label{lem:Voronoi}
Let $x\geq x_0 \geq 1$. Then
\[
\Delta^{\ast}(x) = \frac{x^{\frac{1}{4}}}{\pi\sqrt{2}}\sum_{n=1}^{\infty} \frac{d(n)}{n^{\frac{3}{4}}}\left(\cos\left(4\pi\sqrt{nx}-\frac{\pi}{4}\right)-\frac{3}{32\pi\sqrt{nx}}\sin\left(4\pi\sqrt{nx}-\frac{\pi}{4}\right)\right) + \frac{V}{x^{\frac{3}{4}}},
\]
where
\[
\left|V\right| \leq V\left(x_0\right) \de \frac{15\zeta^2\left(\frac{7}{4}\right)}{2^{11}\pi^3\sqrt{2}} + \frac{105\zeta^2\left(\frac{9}{4}\right)}{2^{16}\pi^4\sqrt{2}}x_0^{-\frac{1}{2}} + \frac{\zeta^2\left(\frac{11}{4}\right)}{\pi\sqrt{2}}\left(1+\frac{3}{32\pi\sqrt{x_0}}\right)x_0^{-1}.
\]
\end{lemma}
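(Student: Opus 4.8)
The plan is to substitute the large-argument asymptotic expansions of the Bessel functions into Vorono\"{i}'s exact formula~\eqref{eq:Voronoi}, keeping two terms and carrying the remainder explicitly. For $z>0$ write
\[
\tfrac{\pi}{2}Y_1(z) = -\sqrt{\tfrac{\pi}{2z}}\,\bigl(\cos(z-\tfrac{\pi}{4}) - \tfrac{3}{8z}\sin(z-\tfrac{\pi}{4})\bigr) + \tfrac{\pi}{2}\rho_Y(z), \qquad 0<K_1(z)\le\sqrt{\tfrac{\pi}{2z}}\,e^{-z}\bigl(1+\tfrac{3}{8z}\bigr),
\]
where, by the classical explicit error bounds for asymptotic expansions of Bessel functions (see, e.g.,~\cite{Olver}), the remainders are dominated by their first omitted terms, so that $\tfrac{\pi}{2}|\rho_Y(z)|\le\sqrt{\tfrac{\pi}{2z}}\bigl(\tfrac{15}{128 z^{2}}+\tfrac{105}{1024 z^{3}}\bigr)$; these bounds hold for every $z>0$, and in the only range that occurs, $z=4\pi\sqrt{nx}\ge 4\pi$, they are already small. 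Set $z_n=4\pi\sqrt{nx}$ and split $K_1(z_n)+\tfrac{\pi}{2}Y_1(z_n)=M_n+r_n$, where $M_n=-\sqrt{\pi/(2z_n)}\bigl(\cos(z_n-\tfrac{\pi}{4})-\tfrac{3}{8z_n}\sin(z_n-\tfrac{\pi}{4})\bigr)$ and $|r_n|\le\tfrac{\pi}{2}|\rho_Y(z_n)|+K_1(z_n)$. Then $\tfrac{d(n)}{\sqrt n}|r_n|\ll d(n)n^{-7/4}$, so $\sum_n\tfrac{d(n)}{\sqrt n}r_n$ converges absolutely; since the full Vorono\"{i} series converges, so does $\sum_n\tfrac{d(n)}{\sqrt n}M_n$, and~\eqref{eq:Voronoi} splits as $\Delta^{\ast}(x)=-\tfrac{2\sqrt x}{\pi}\sum_n\tfrac{d(n)}{\sqrt n}M_n-\tfrac{2\sqrt x}{\pi}\sum_n\tfrac{d(n)}{\sqrt n}r_n$.

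Using $\sqrt{\pi/(2z_n)}=(2\sqrt2)^{-1}(nx)^{-1/4}$ and $3/(8z_n)=3/(32\pi\sqrt{nx})$, the first sum reduces by elementary algebra to exactly the main term claimed in the statement, and it remains to bound $\tfrac{2\sqrt x}{\pi}\sum_n\tfrac{d(n)}{\sqrt n}|r_n|$. Here the $\tfrac{15}{128 z_n^{2}}$-part of $\tfrac{\pi}{2}|\rho_Y(z_n)|$ contributes $\asymp d(n)n^{-7/4}x^{-3/4}$ per term, summing to $\tfrac{15\zeta^2(7/4)}{2^{11}\pi^3\sqrt2}\,x^{-3/4}$; the $\tfrac{105}{1024 z_n^{3}}$-part contributes $\asymp d(n)n^{-9/4}x^{-5/4}$, and $x^{-5/4}\le x_0^{-1/2}x^{-3/4}$ turns its sum into the second summand of $V(x_0)$; finally $K_1(z_n)\le\tfrac{1}{2\sqrt2}(nx)^{-1/4}e^{-4\pi\sqrt{nx}}\bigl(1+\tfrac{3}{32\pi\sqrt{x_0}}\bigr)$ together with the crude bound $e^{-4\pi\sqrt{nx}}\le(nx)^{-2}$ (valid since $nx\ge1$) contributes $\asymp\bigl(1+\tfrac{3}{32\pi\sqrt{x_0}}\bigr)d(n)n^{-11/4}x^{-7/4}$, and $x^{-7/4}\le x_0^{-1}x^{-3/4}$ then produces the last summand. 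Collecting the three pieces, with $\sum_n d(n)n^{-s}=\zeta(s)^2$ for $s\in\{7/4,9/4,11/4\}$, gives $|V|\le V(x_0)$.

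The main obstacle is the first step: pinning down explicit and sufficiently sharp bounds for the Bessel remainder $\rho_Y$ and the bound on $K_1$ at order $\nu=1$. The textbook ``remainder does not exceed the first omitted term'' statements are cleanest for $|\nu|\le\tfrac12$, so at $\nu=1$ one must either invoke the general error bounds with their accompanying constant, or reduce to a favourable case — for instance via $Y_1=-Y_0'$ and a differentiated estimate, or through the Hankel-function form $H_1^{(1)}(z)=\sqrt{2/(\pi z)}\,e^{\ie(z-3\pi/4)}W_1(z)$, for which clean remainder bounds are available — and then propagate the constants that arise. Everything downstream is routine: matching the powers of $n$ and $x$, extracting the common factor $x^{-3/4}$ by means of $x\ge x_0$, and evaluating the three Dirichlet series.
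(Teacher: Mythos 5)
Your proposal is correct and follows essentially the same route as the paper: substitute two-term large-argument expansions of $K_1$ and $Y_1$ into~\eqref{eq:Voronoi}, bound the $Y_1$-remainder by $\sqrt{\pi/(2z)}\left(\tfrac{15}{128z^2}+\tfrac{105}{1024z^3}\right)$ and the $K_1$-term via $e^{-4\pi u}<u^{-4}$, and evaluate the resulting Dirichlet series at $7/4$, $9/4$, $11/4$; your constants all match. The explicit Bessel remainder bounds you flag as the ``main obstacle'' are exactly those of Nemes~\cite{NemesHankelBessel}, which is the reference the paper invokes at that step.
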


\begin{proof}
Let $u\geq0$. By~\cite{NemesHankelBessel} we know that
\begin{gather*}
K_1(u) = \sqrt{\frac{\pi}{2u}}e^{-u}R_1(u), \\
Y_1(u) = -\sqrt{\frac{2}{\pi u}}\left(\cos\left(u-\frac{\pi}{4}\right) - \frac{3}{8u}\sin\left(u-\frac{\pi}{4}\right)\right) + R_2(u),
\end{gather*}
where
\[
\left|R_1(u)\right| \leq 1 + \frac{3}{8u}, \quad \left|R_2(u)\right| \leq \frac{15}{128}\sqrt{\frac{2}{\pi}}\left(1+\frac{7}{8u}\right)\frac{1}{u^2\sqrt{u}}.
\]
With this notation~\eqref{eq:Voronoi} implies
\begin{flalign*}
\Delta^{\ast}(x) = \frac{x^{\frac{1}{4}}}{\pi\sqrt{2}}\sum_{n=1}^{\infty} \Bigg(&\frac{d(n)}{n^{\frac{3}{4}}}\left(\cos\left(4\pi\sqrt{nx}-\frac{\pi}{4}\right)-\frac{3}{32\pi\sqrt{nx}}\sin\left(4\pi\sqrt{nx}-\frac{\pi}{4}\right)\right) \\
&- \frac{d(n)}{n^{\frac{3}{4}}}e^{-4\pi\sqrt{nx}}R_1\left(4\pi\sqrt{nx}\right) - \pi\sqrt{2}x^{\frac{1}{4}}\frac{d(n)}{\sqrt{n}}R_2\left(4\pi\sqrt{nx}\right)\Bigg).
\end{flalign*}
Noticing that $e^{-4\pi u}<u^{-4}$ for $u\geq 1$, we obtain
\[
\frac{x^{\frac{1}{4}}}{\pi\sqrt{2}}\sum_{n=1}^{\infty}\frac{d(n)}{n^{\frac{3}{4}}}e^{-4\pi\sqrt{nx}}\left|R_1\left(4\pi\sqrt{nx}\right)\right| \leq \frac{\zeta^2\left(\frac{11}{4}\right)}{\pi\sqrt{2}}\left(1+\frac{3}{32\pi\sqrt{x}}\right)x^{-\frac{7}{4}}.
\]
Also,
\[
\sqrt{x}\sum_{n=1}^{\infty}\frac{d(n)}{\sqrt{n}}\left|R_2\left(4\pi\sqrt{nx}\right)\right| \leq \frac{15}{2^{11}\pi^3\sqrt{2}}\left(\zeta^2\left(\frac{7}{4}\right)x^{-\frac{3}{4}} + \frac{7}{32\pi}\zeta^2\left(\frac{9}{4}\right)x^{-\frac{5}{4}}\right).
\]
Now the statement of Lemma~\ref{lem:Voronoi} clearly follows.
\end{proof}

Vorono\"{i} also proved the truncated version of~\eqref{eq:Voronoi}, namely
\begin{equation}
\label{eq:VoronoiTrun}
\Delta(x) = \frac{x^{\frac{1}{4}}}{\pi\sqrt{2}}\sum_{n\leq N}\frac{d(n)}{n^{\frac{3}{4}}}\cos{\left(4\pi\sqrt{nx}-\frac{\pi}{4}\right)} + O\left(x^{\varepsilon}\right) + O\left(x^{\frac{1}{2}+\varepsilon}N^{-\frac{1}{2}}\right).
\end{equation}
Observe that taking $N=\lfloor{x^{\frac{1}{3}}}\rfloor$ gives $\left|\Delta(x)\right|\ll_{\varepsilon} x^{\frac{1}{3}+\varepsilon}$. In view of Taylor's expansion
\[
f(t,n) = 4\pi\sqrt{\frac{nt}{2\pi}}-\frac{\pi}{4}+O\left(n^{\frac{3}{2}}t^{-\frac{1}{2}}\right),
\]
we can see a strong resemblance between~\eqref{eq:JutilaOrigEst} and~\eqref{eq:VoronoiTrun} for $N=M$ and $x=t/(2\pi)$, save only for the $(-1)^n$ term. However, Jutila was able to ``correct'' the function $\Delta(x)$ to show that~\eqref{eq:VoronoiTrun} with the $(-1)^n$ term is true for $-\Delta(x)+2\Delta(2x)-\frac{1}{2}\Delta(4x)$, see~\cite{Jutila83} and~\cite[pp.~472--473]{Ivic}.

\section{The saddle point lemma}
\label{sec:saddle}

The purpose of this section is to obtain explicit asymptotics of a particular family of trigonometric integrals
\[
\mathcal{E}\left(a,b,k;\varphi,f\right) \de \int_{a}^{b}\varphi(x)e^{2\pi\ie\left(f(x)+kx\right)}\dif{x},
\]
where $\varphi(z)$ and $f(z)$ are complex functions satisfying some special properties, and $a,b,k$ are real numbers with $a<b$, see Proposition~\ref{prop:saddle}. One of these conditions assures that $f''(x)$ is positive for $a\leq x\leq b$. This implies that $f'(x)+k$ is a strictly increasing function. Therefore, if there exists a zero $x_0\in[a,b]$ of $f'(x)+k$, it must be unique and simple. Furthermore, we must also have $f'(x)+k\leq 0$ for $x\in \left[a,x_0\right]$, and $f'(x)+k\geq 0$ for $x\in \left[x_0,b\right]$.

In the following proposition $\D{x, r}$ is used for the open disk centered at $x$ with radius $r$ and $\cD{x, r}$ for the closure of the disk.

\begin{proposition}[Saddle point lemma]
\label{prop:saddle}
Let $f(z)$ and $\varphi(z)$ be two complex functions, and $[a,b]\subset\R$. Assume that:
\begin{enumerate}
    \item For $x\in[a,b]$ the function $f(x)$ is real and $f''(x)>0$;
    \item There exists $\mu(x)\in\mathcal{C}^1\left([a,b]\right)$, such that $\mu(x)>0$, and $f(z)$ and $\varphi(z)$ are holomorphic functions on the set
    \[
    \bigcup_{x\in[a,b]} \cD{x,\mu(x)};
    \]
    \item There exist positive functions $F(x)$ and $\Phi(x)$ defined on $[a,b]$, and positive constants $A_1$, $A_2$ and $A_3$, such that for $x\in[a,b]$ and $z\in\cD{x,\mu(x)}$, we have
    \[
    \left|\varphi(z)\right|\leq A_1\Phi(x), \quad \left|f'(z)\right|\leq A_2\frac{F(x)}{\mu(x)}, \quad
    \frac{1}{\left|f''(z)\right|} \leq A_3\frac{\mu^2(x)}{F(x)}.
    \]
\end{enumerate}
Take $k\in\R$, and let
\begin{equation}
\label{eq:alphacond}
0 < \alpha_0 < \min\left\{\frac{1}{\left(1+\frac{2A_2A_3}{3}\right)\sqrt{2}},\frac{A_3\left(1+2A_2\right)}{2},\left(\frac{48A_3^3}{\pi^3}\right)^{\frac{1}{6}}\right\}.
\end{equation}
\begin{enumerate}
    \item If there exists $x_0\in[a,b]$ such that $f'\left(x_0\right)=-k$, then
\begin{equation}
\label{eq:Emain}
\mathcal{E}\left(a,b,k;\varphi,f\right) = \frac{\varphi\left(x_0\right)}{\sqrt{f''\left(x_0\right)}}e^{2\pi\ie\left(f\left(x_0\right)+kx_0+\frac{1}{8}\right)} + \mathcal{R}_1 + \mathcal{R}_2 + \mathcal{R}_3;
\end{equation}
\item Otherwise, $\mathcal{E}\left(a,b,k;\varphi,f\right) = \mathcal{R}_1 + \mathcal{R}_2$.
\end{enumerate}
Here,
\begin{gather}
\left|\mathcal{R}_1\right| \leq A_1B_1\left(\frac{\Phi(a)}{\left|f'(a)+k\right|+\sqrt{f''(a)}}+\frac{\Phi(b)}{\left|f'(b)+k\right|+\sqrt{f''(b)}}\right), \label{eq:R1} \\
\left|\mathcal{R}_2\right| \leq A_1\int_{a}^{b}\Phi(x)e^{B_2\left(-|k|\mu(x)-F(x)\right)}\left(1+\alpha_0\sqrt{2}\left|\mu'(x)\right|\right)\dif{x}, \label{eq:R2} \\
\left|\mathcal{R}_3\right| \leq A_1\left(A_3^4B_3+\sqrt{A_3}B_4\right)\Phi\left(x_0\right)\mu\left(x_0\right)F^{-\frac{3}{2}}\left(x_0\right), \label{eq:R3}
\end{gather}
with
\begin{gather}
B_1\left(A_2,A_3,\alpha_0\right)\de \frac{1}{1-\beta\left(A_2,A_3,\alpha_0\right)}+\frac{1}{\pi}\exp{\left(-\frac{\pi}{2\left(1-\beta\left(A_2,A_3,\alpha_0\right)\right)}\right)}, \label{eq:B1} \\
B_2\left(A_2,A_3,\alpha_0\right)\de \frac{2\pi\alpha_0^2\left(1-\beta\left(A_2,A_3,\alpha_0\right)\right)}{A_3\left(1+2A_2\right)}, \label{eq:B2} \\
B_3\left(A_2,A_3,\alpha_0\right) \de \frac{48\sqrt{2}}{\pi^4\alpha_0^7}\left(1+\left(1-\beta\left(A_2,A_3,\alpha_0\right)\right)^{-4}\right) \label{eq:B3}
\end{gather}
and
\begin{multline}
\label{eq:B4}
B_4\left(A_2,A_3,\alpha_0\right) \de \frac{1+\alpha_0^3\omega_3\left(A_2,\alpha_0\right)}{\left(2\pi/A_3\right)\left(1-\alpha_0\sqrt{2}\right)} \\
+ \frac{3}{\left(4\pi/ A_3\right)^2}\left(\frac{2\pi A_2}{1-\alpha_0\sqrt{2}}+\omega_3\left(A_2,\alpha_0\right)\sqrt{2}\right) + \frac{15\omega_2\left(A_2,\alpha_0\right)}{\left(4\pi/A_3\right)^3},
\end{multline}
where
\begin{equation}
\label{eq:beta}
\beta\left(A_2,A_3,\alpha_0\right)\de \frac{2\alpha_0\sqrt{2}A_2A_3}{3\left(1-\alpha_0\sqrt{2}\right)},
\end{equation}
and $\omega_2\left(A_2,\alpha_0\right)$ and $\omega_3\left(A_2,\alpha_0\right)$ are defined by~\eqref{eq:omega2} and~\eqref{eq:omega3}, respectively.
\end{proposition}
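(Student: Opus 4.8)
The plan is to run the classical saddle-point (steepest descent) argument while tracking every constant explicitly. Set $g(x) \de f(x)+kx$, so that $g''=f''>0$ on $[a,b]$ by assumption~(1); hence $g'$ is strictly increasing and a zero $x_0$ of $g'$, if it exists, is unique, with $g'\leq 0$ on $[a,x_0]$ and $g'\geq 0$ on $[x_0,b]$. Since $\varphi$ and $f$ are holomorphic on $\bigcup_{x\in[a,b]}\cD{x,\mu(x)}$ by assumption~(2), Cauchy's theorem lets us replace the segment $[a,b]$ by any homotopic contour $\mathcal{C}$ lying in that region, at the cost of two short vertical connecting segments at $a$ and $b$. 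I would take $\mathcal{C}$ to: (i) near $x_0$ follow the steepest-descent ray $z=x_0+t\,e^{\ie\pi/4}$, along which $2\pi\ie\, g(z)=2\pi\ie\, g(x_0)-\pi f''(x_0)t^2+(\text{cubic remainder})$, so $e^{2\pi\ie\, g(z)}$ decays like a Gaussian; and (ii) away from $x_0$ stay at height $\asymp\alpha_0\mu(x)$ off the real axis (below it on $[a,x_0]$, above it on $[x_0,b]$, matching the sign of $g'$), where $\Im g$ is positive and large, making $e^{2\pi\ie\, g}$ exponentially small. The three bounds of assumption~(3) are exactly the quantitative control of $\varphi$, $g'$ and $1/g''$ on $\mathcal{C}$ that the estimates need, and the three-fold minimum defining $\alpha_0$ in~\eqref{eq:alphacond} is forced by the three requirements that $\mathcal{C}$ stay inside the disks $\cD{x,\mu(x)}$, that the exponential decay beat the polynomial factors $\Phi,F$, and that the cubic Taylor remainder near $x_0$ remain controllable (the latter being the source of $\alpha_0<(48A_3^3/\pi^3)^{1/6}$).

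Next I would split the contribution of $\mathcal{C}$ into three parts. The endpoint pieces of $\mathcal{C}$, together with the connecting segments, produce $\mathcal{R}_1$: integration by parts via $\tfrac{\mathrm{d}}{\mathrm{d}z}e^{2\pi\ie\, g(z)}=2\pi\ie\, g'(z)e^{2\pi\ie\, g(z)}$ gives a bound $\lesssim\Phi(a)/|g'(a)|$, while when $g'(a)$ is small (a saddle near the endpoint) the short segment is bounded directly by $\lesssim\Phi(a)/\sqrt{f''(a)}$; combining the two yields the denominator $|f'(a)+k|+\sqrt{f''(a)}$, with the constant $B_1$ of~\eqref{eq:B1} absorbing the geometric-series factor $1/(1-\beta)$ from~\eqref{eq:beta} and a small exponential correction, and similarly at $b$. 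The bulk of $\mathcal{C}$ away from $x_0$ gives $\mathcal{R}_2$: there convexity of $g$ and assumption~(3) force $\Im g(z)\geq B_2\bigl(|k|\mu(x)+F(x)\bigr)$ on the chosen contour, while the arclength element $|\mathrm{d}z|$ contributes $1+\alpha_0\sqrt{2}\,|\mu'(x)|$, giving~\eqref{eq:R2}. The neighbourhood of $x_0$ carries the main term: parametrizing by $z=x_0+u\,e^{\ie\pi/4}$, replacing $\varphi(z)$ by $\varphi(x_0)$ and $g(z)$ by its quadratic Taylor polynomial, and completing the Gaussian integral,
\[
\varphi(x_0)\,e^{2\pi\ie\,(g(x_0)+\frac18)}\int_{-\infty}^{\infty}e^{-\pi f''(x_0)u^2}\,\mathrm{d}u=\frac{\varphi(x_0)}{\sqrt{f''(x_0)}}\,e^{2\pi\ie\,(f(x_0)+kx_0+\frac18)},
\]
the factor $e^{\ie\pi/4}=e^{2\pi\ie/8}$ being the Jacobian of the rotation. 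The three errors thereby incurred — replacing $\varphi$ by $\varphi(x_0)$ (controlled via a Cauchy estimate for $\varphi'$ and assumption~(3)), the cubic Taylor remainder of $g$ on $\cD{x_0,\mu(x_0)}$ (a Cauchy estimate for $f'''$ against $\sup|f'|$), and completing the Gaussian tails — are collected into $\mathcal{R}_3$ and~\eqref{eq:R3}; the auxiliary functions $\omega_2,\omega_3$ and the constants $B_3,B_4$ of~\eqref{eq:B3}--\eqref{eq:B4} are the explicit bookkeeping of these sources, with $A_3^4B_3$ coming from the quadratic-approximation error and $\sqrt{A_3}\,B_4$ from the tails. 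Part~(2), the no-saddle case, is the same argument with the $x_0$-neighbourhood absent: $g'$ keeps a fixed sign on $[a,b]$, $\mathcal{C}$ never touches the real axis in the interior, and only $\mathcal{R}_1,\mathcal{R}_2$ survive.

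The main obstacle is the fully explicit treatment of the neighbourhood of $x_0$. Softly this is routine, but with constants one must: (a) fix the exact length of the steepest-descent segment so that its complement is genuinely dominated by the $\mathcal{R}_2$ estimate and does not spawn a new term; (b) bound $g(z)-g(x_0)-\tfrac12 f''(x_0)(z-x_0)^2$ on the whole relevant disk, with explicit constants, to control $|e^{2\pi\ie\,g(z)}-e^{2\pi\ie(\text{quadratic})}|$; and (c) estimate the Gaussian tail $\int_{|u|>\cdots}e^{-\pi f''(x_0)u^2}\,\mathrm{d}u$ explicitly. A second, lengthier chore is propagating explicit constants through Cauchy's estimates, Taylor's theorem with remainder, and the endpoint integration by parts, verifying at each step that~\eqref{eq:alphacond} keeps the relevant quantities — notably $1-\beta$ from~\eqref{eq:beta} — bounded away from the degenerate range. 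Once these explicit estimates are assembled, deriving~\eqref{eq:Emain} and its no-saddle analogue from the three pieces is mechanical.
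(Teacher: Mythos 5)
Your plan is essentially the paper's proof: the paper deforms $[a,b]$ onto a polygonal path made of endpoint segments in the $(1+\ie)$ direction, slanted pieces $x\mp\alpha_0(1+\ie)\mu(x)$ on $[a,x_0]$ and $[x_0,b]$, and a straight $(1+\ie)$-segment through $x_0$, and your $\mathcal{R}_1,\mathcal{R}_2,\mathcal{R}_3$ correspond exactly to its $\mathcal{I}_1+\mathcal{I}_5$, $\mathcal{I}_2+\mathcal{I}_4$, and $\mathcal{I}_3$ minus the main term, with the explicit Taylor remainders supplied by a Cauchy-integral lemma just as you indicate. The only cosmetic deviations are at the endpoints, where the paper does not integrate by parts but bounds the linear-plus-Gaussian exponential integral directly (its Lemma~\ref{lem:integral}), which is what yields the denominator $\left|f'(a)+k\right|+\sqrt{f''(a)}$, and in the precise roles of the second and third terms in~\eqref{eq:alphacond} (they ensure $\pi\alpha_0\geq B_2$ in the $\mathcal{R}_2$ estimate and absorb the small-$F_0$ case into~\eqref{eq:B3}, rather than controlling the cubic remainder); otherwise the decomposition, contour geometry, and sources of $B_1,\ldots,B_4$ match.
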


Before proceeding to the proof of Proposition~\ref{prop:saddle}, we need the following two lemmas. The first one is Taylor's formula for holomorphic functions with an explicit remainder term, stated suitably for our purposes, while the second one is a simple estimation of a particular exponential integral.

\begin{lemma}
\label{lem:taylor}
Let $0<r_1<r_2$. Let $f$ be a holomorphic function on $\cD{z_0,r_2}$ such that $\left|f(z)\right|\leq M_1$ and $\left|f'(z)\right|\leq M_2$ for $z\in\partial\D{z_0,r_2}$. Then
\[
f(z) = \sum_{n=0}^{k} \frac{f^{(n)}\left(z_0\right)}{n!}\left(z-z_0\right)^{n} + T_k(z)
\]
for $z\in\cD{z_0,r_1}$, where
\[
\left|T_k(z)\right| \leq \frac{M_1\left|z-z_0\right|^{k+1}}{\left(1-\frac{r_1}{r_2}\right)r_2^{k+1}}, \quad
\left|T_k(z)\right| \leq \frac{M_2\left|z-z_0\right|^{k+1}}{(k+1)\left(1-\frac{r_1}{r_2}\right)r_2^{k}}.
\]
\end{lemma}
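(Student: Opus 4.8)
\textbf{Proof proposal for Lemma~\ref{lem:taylor}.}

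The plan is to obtain both the Taylor expansion and the two bounds on the remainder term from Cauchy's integral formula, exploiting the fact that $f$ is holomorphic on the closed disk $\cD{z_0,r_2}$ and hence equals its Taylor series there. First I would write the standard integral representation of the $k$-th Taylor remainder,
\[
T_k(z) = f(z) - \sum_{n=0}^{k}\frac{f^{(n)}\left(z_0\right)}{n!}\left(z-z_0\right)^{n} = \frac{1}{2\pi\ie}\oint_{\partial\D{z_0,r_2}} f(w)\frac{\left(z-z_0\right)^{k+1}}{\left(w-z_0\right)^{k+1}\left(w-z\right)}\dif{w},
\]
valid for $z\in\D{z_0,r_2}$, and in particular for $z\in\cD{z_0,r_1}$ since $r_1<r_2$. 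This identity follows by subtracting the first $k+1$ terms of the geometric-series expansion of $1/(w-z)$ in powers of $(z-z_0)/(w-z_0)$ from Cauchy's formula for $f(z)$, and recognizing each term via $f^{(n)}(z_0) = \frac{n!}{2\pi\ie}\oint (w-z_0)^{-n-1}f(w)\dif{w}$.

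Next I would estimate the contour integral directly. On $\partial\D{z_0,r_2}$ we have $|w-z_0| = r_2$ and, for $z\in\cD{z_0,r_1}$, the triangle inequality gives $|w-z|\geq r_2-r_1 = r_2\left(1-r_1/r_2\right)$. Using $|f(w)|\leq M_1$ on the boundary and the fact that the contour has length $2\pi r_2$, the trivial bound yields
\[
\left|T_k(z)\right| \leq \frac{1}{2\pi}\cdot 2\pi r_2\cdot\frac{M_1\left|z-z_0\right|^{k+1}}{r_2^{k+1}\cdot r_2\left(1-\frac{r_1}{r_2}\right)} = \frac{M_1\left|z-z_0\right|^{k+1}}{\left(1-\frac{r_1}{r_2}\right)r_2^{k+1}},
\]
which is the first claimed inequality. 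For the second, I would instead start from the analogous representation of the remainder of the derivative series, or more simply differentiate: since $f'$ is holomorphic on $\cD{z_0,r_2}$ with $|f'(w)|\leq M_2$ on the boundary, the Taylor remainder $T_{k-1}$ of $f'$ at order $k-1$ satisfies $|T_{k-1}^{f'}(z)|\leq M_2|z-z_0|^{k}/\bigl((1-r_1/r_2)r_2^{k}\bigr)$ by the bound just proved; but $T_k(z) = \int_{z_0}^{z} T_{k-1}^{f'}(\zeta)\dif{\zeta}$ along the straight segment, whence $\left|T_k(z)\right|\leq \int_0^{|z-z_0|}\frac{M_2 s^{k}}{\left(1-r_1/r_2\right)r_2^{k}}\dif{s} = \frac{M_2\left|z-z_0\right|^{k+1}}{(k+1)\left(1-\frac{r_1}{r_2}\right)r_2^{k}}$, the second claimed inequality. (Alternatively one writes $T_k'(z)$ as a Cauchy-type integral with an extra factor $1/(w-z)$ and a polynomial correction, and estimates as before; the integration-along-a-segment argument is cleaner.)

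There is no serious obstacle here; the only point requiring a little care is bookkeeping the two slightly different contour kernels so that the powers of $r_2$ and the factor $(1-r_1/r_2)$ come out exactly as stated, and making sure the straight-segment integration used for the second bound is justified (it is, since $\cD{z_0,r_1}$ is convex and $T_{k-1}^{f'}$ is continuous there). One should also note at the outset that hypotheses guarantee $f$ is holomorphic on an open set containing $\cD{z_0,r_2}$, or at least that $f$ extends continuously to the boundary and is holomorphic inside, so that Cauchy's formula on $\partial\D{z_0,r_2}$ applies; this is implicit in the statement and standard. With these points dispatched the lemma follows immediately.
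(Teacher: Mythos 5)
Your proposal is correct, and for the first bound it is essentially the paper's argument: both write the remainder as a Cauchy-type contour integral over $\partial\D{z_0,r_2}$ (the paper keeps the geometric series $\sum_{n\ge0}\bigl((z-z_0)/(w-z_0)\bigr)^n$ under the integral, you sum it to the kernel $1/(w-z)$) and then estimate trivially using $|w-z_0|=r_2$, $|w-z|\ge r_2-r_1$, obtaining the same constant. For the second bound your route differs from the paper's: the paper writes a \emph{second} exact representation of $T_k$ directly in terms of $f'$, namely
\[
T_k(z) = \frac{\left(z-z_0\right)^{k+1}}{2\pi\ie}\int_{\gamma} \frac{f'(w)}{(k+1)\left(w-z_0\right)^{k+1}}\sum_{n=0}^{\infty}\frac{k+1}{k+1+n}\left(\frac{z-z_0}{w-z_0}\right)^n \dif{w},
\]
coming from $f^{(n)}(z_0)=\frac{(n-1)!}{2\pi\ie}\int_{\gamma}f'(w)(w-z_0)^{-n}\dif{w}$, and bounds it using $\frac{k+1}{k+1+n}\le1$; you instead apply the already-proved $M_1$-type bound to the order-$(k-1)$ remainder of $f'$ and integrate along the radial segment, using $T_k(z)=\int_{z_0}^{z}T_{k-1}^{f'}(\zeta)\dif{\zeta}$, the factor $\frac{1}{k+1}$ arising from $\int_0^{|z-z_0|}s^k\dif{s}$. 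Both derivations give exactly the stated constant; yours is arguably more economical (one contour identity instead of two), while the paper's keeps everything in a single uniform contour-integral framework. The only point worth flagging in your version is the degenerate case $k=0$, where $T_{k-1}^{f'}=f'$ itself; the bound $|f'(\zeta)|\le M_2\le M_2/(1-r_1/r_2)$ on $\cD{z_0,r_1}$ (maximum modulus) covers it, so nothing breaks, but a one-line remark would make the induction base explicit.
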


\begin{proof}
Let $\gamma$ be the positively oriented boundary of $\D{z_0,r_2}$. By Cauchy's integral formula for derivatives we have
\begin{gather*}
T_k(z) = \frac{\left(z-z_0\right)^{k+1}}{2\pi\ie}\int_{\gamma} \frac{f(w)}{\left(w-z_0\right)^{k+2}}\sum_{n=0}^{\infty}\left(\frac{z-z_0}{w-z_0}\right)^n \dif{w}, \\
T_k(z) = \frac{\left(z-z_0\right)^{k+1}}{2\pi\ie}\int_{\gamma} \frac{f'(w)}{(k+1)\left(w-z_0\right)^{k+1}}\sum_{n=0}^{\infty}\frac{k+1}{k+1+n}\left(\frac{z-z_0}{w-z_0}\right)^n \dif{w}.
\end{gather*}
Since $w=z_0+r_2e^{\ie\varphi}$, $\varphi\in[0,2\pi]$, and $\left|z-z_0\right|\leq r_1$, $\left|w-z_0\right|=r_2$, the final bounds easily follow from the above exact formulas.
\end{proof}

\begin{lemma}
\label{lem:integral}
Let $A$ be a non-negative real number, and let $B$ and $a$ be positive real numbers. Then
\[
\int_{0}^{\infty} \exp{\left(-Ax-\frac{aB}{4}x^2\right)}\dif{x} \leq \frac{\left(\frac{2}{a}+e^{-\frac{1}{a}}\right)\sqrt{2}}{A+\sqrt{B}}.
\]
\end{lemma}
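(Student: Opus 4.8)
I would prove this by splitting the integral at the point $x_0 \de 2/(a\sqrt{B})$, chosen so that the Gaussian part of the exponent, $\tfrac{aB}{4}x^2$, equals $1/a$ there, and estimating the two pieces $\mathcal{I}_{\mathrm{head}}=\int_0^{x_0}$ and $\mathcal{I}_{\mathrm{tail}}=\int_{x_0}^{\infty}$ of the exponent $-Ax-\tfrac{aB}{4}x^2$ separately. For the tail, the key observation is that for $x\geq x_0$ one has $x+x_0\geq 2x_0$, hence $\tfrac{aB}{4}(x^2-x_0^2)=\tfrac{aB}{4}(x-x_0)(x+x_0)\geq\tfrac{aBx_0}{2}(x-x_0)=\sqrt{B}(x-x_0)$ since $\tfrac{aBx_0}{2}=\sqrt{B}$; combined with $\tfrac{aB}{4}x_0^2=1/a$ this gives $Ax+\tfrac{aB}{4}x^2\geq \tfrac{1}{a}+Ax_0+(A+\sqrt{B})(x-x_0)$ for $x\geq x_0$, so $\mathcal{I}_{\mathrm{tail}}\leq e^{-1/a-Ax_0}(A+\sqrt{B})^{-1}\leq e^{-1/a}(A+\sqrt{B})^{-1}$. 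For the head I would drop the Gaussian factor and use the elementary inequality $1-e^{-t}\leq 2t/(2+t)$ for $t\geq 0$ (which follows from $(2-t)e^{t}<2+t$, an easy two-derivative estimate): $\mathcal{I}_{\mathrm{head}}\leq\int_0^{x_0}e^{-Ax}\dif{x}=(1-e^{-Ax_0})/A\leq 2x_0/(2+Ax_0)=2/(A+a\sqrt{B})$, a bound that remains valid at $A=0$ by continuity. Adding the two estimates yields
\[
\int_{0}^{\infty}\exp\left(-Ax-\frac{aB}{4}x^2\right)\dif{x}\ \leq\ \frac{2}{A+a\sqrt{B}}+\frac{e^{-1/a}}{A+\sqrt{B}}.
\]

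Scaling out $\sqrt{B}$ by putting $r\de A/\sqrt{B}\geq 0$, the lemma now reduces to the purely elementary two-variable inequality $2(r+1)/(r+a)+e^{-1/a}\leq\sqrt{2}\bigl(2/a+e^{-1/a}\bigr)$, i.e. $2(r+1)(r+a)^{-1}\leq 2\sqrt{2}a^{-1}+(\sqrt{2}-1)e^{-1/a}$. When $a$ is not too large the right-hand side is $\geq 2$, which dominates the left-hand side for every $r\geq 0$ (the left-hand side is at most $2/a$ for $a\leq 1$ and increases in $r$ towards $2$ for $a>1$), so the inequality holds outright; in general it holds for all $r$ below an explicit, $a$-dependent threshold $\rho_1(a)$ that grows without bound as $a\to\infty$. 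In the complementary large-$r$ range one instead uses the trivial bound $\int_0^{\infty}\exp(-Ax-\tfrac{aB}{4}x^2)\dif{x}\leq\int_0^{\infty}e^{-Ax}\dif{x}=1/A$ (here necessarily $A>0$), which gives the claim as soon as $1+1/r\leq\sqrt{2}(2/a+e^{-1/a})$, i.e. for $r\geq\rho_2(a)\de(\sqrt{2}(2/a+e^{-1/a})-1)^{-1}$, the denominator being $\geq\sqrt{2}-1>0$.

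These two cases cover all $r\geq 0$ provided $\rho_1(a)\geq\rho_2(a)$ for every $a$, and I expect this overlap check — where the constant $\sqrt{2}$ turns out to be essentially sharp, attained in the limit $a\to\infty$ with $r\to\sqrt{2}+1$ — to be the main technical obstacle; the two integral bounds above are routine. The verification itself is elementary but fiddly: $\rho_2(a)$ is continuous and confined to a bounded interval (tending to $\sqrt{2}+1$ as $a\to\infty$), whereas $\rho_1(a)$ is infinite for small $a$ (it has a pole where the right-hand side of the reduced inequality equals $2$) and grows linearly as $a\to\infty$, so only a bounded range of $a$ needs to be examined, which can be settled by monotonicity/convexity bookkeeping or, if one prefers, a direct numerical check on that compact range. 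Assembling these pieces gives the stated bound.
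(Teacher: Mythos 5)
Your two integral estimates are correct: the tail bound via $\frac{aB}{4}\left(x^2-x_0^2\right)\geq\sqrt{B}\left(x-x_0\right)$ and the head bound via $1-e^{-t}\leq 2t/(2+t)$ both check out, and they correctly reduce the lemma to the two-variable inequality $\frac{2(r+1)}{r+a}+e^{-1/a}\leq\sqrt{2}\left(\frac{2}{a}+e^{-1/a}\right)$ for all $r$ below the threshold $\rho_2(a)$ at which the trivial bound $1/A$ takes over (and your treatment of $a\leq 1$ is fine). The gap is that this concluding inequality --- which you yourself flag as ``the main technical obstacle'' --- is never actually established: you assert that the overlap $\rho_1(a)\geq\rho_2(a)$ should follow from ``monotonicity/convexity bookkeeping or a direct numerical check'', but no such argument is supplied, so the proof is incomplete as written. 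Moreover the check is more delicate than your sketch suggests: since for $a>1$ the quantity $\frac{2(r+1)}{r+a}$ increases in $r$, the natural shortcut of replacing $\rho_2(a)$ by its uniform bound $\sqrt{2}+1$ fails (at $a=3$ one has $\frac{2(\sqrt{2}+2)}{a+\sqrt{2}+1}\approx 1.261$, exceeding $\frac{2\sqrt{2}}{a}+(\sqrt{2}-1)e^{-1/a}\approx 1.240$). One must work with the exact threshold, i.e.\ prove, with $g=\frac{2}{a}+e^{-1/a}$, that $\frac{2\sqrt{2}\,g}{1+a\left(\sqrt{2}g-1\right)}\leq\sqrt{2}g-e^{-1/a}$ for all $a>1$; numerically this does hold, with a margin dipping to roughly $0.15$ near $a\approx 6$, so your route can be completed, but the verification is a genuine piece of work that is missing, and a non-rigorous numerical scan would not suffice for an explicit-constants lemma.

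By contrast, the paper's proof avoids any case analysis: the substitution $u=Ax+\frac{aB}{4}x^2$ turns the integral into $\int_0^{\infty}e^{-u}\left(A^2+aBu\right)^{-\frac{1}{2}}\dif{u}$, which is split at $u=1/a$; the piece $u\leq 1/a$ integrates exactly to $\frac{2/a}{A+\sqrt{A^2+B}}$, the piece $u\geq 1/a$ is at most $e^{-1/a}/\sqrt{A^2+B}$, and the constant $\sqrt{2}$ enters through the single inequality $\sqrt{A^2+B}\geq\left(A+\sqrt{B}\right)/\sqrt{2}$. If you wish to keep your substitution-free decomposition you must supply the missing elementary verification of the final inequality; otherwise the change of variables gives the stated bound in a few lines.
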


\begin{proof}
By introducing a new variable $u=Ax+(aB/4)x^2$, we have
\begin{flalign*}
\int_{0}^{\infty} \exp{\left(-Ax-\frac{aB}{4}x^2\right)}\dif{x} &= \int_{0}^{\infty} \frac{e^{-u}\dif{u}}{\sqrt{A^2+aBu}} = \left(\int_{0}^{\frac{1}{a}}+\int_{\frac{1}{a}}^{\infty}\right)\frac{e^{-u}\dif{u}}{\sqrt{A^2+aBu}} \\
& \leq \int_{0}^{\frac{1}{a}} \frac{\dif{u}}{\sqrt{A^2+aBu}} + \frac{1}{\sqrt{A^2+B}}\int_{\frac{1}{a}}^{\infty} e^{-u}\dif{u} \\
& = \frac{2/a}{\sqrt{A^2+B}+A} + \frac{e^{-\frac{1}{a}}}{\sqrt{A^2+B}}.
\end{flalign*}
The final bound follows from the fact that $\sqrt{A^2+B}\geq \left(A+\sqrt{B}\right)/\sqrt{2}$.
\end{proof}

\begin{proof}[Proof of Proposition~\ref{prop:saddle}]
Firstly, assume that there exists a zero $x_0\in[a,b]$ of $f'(x)+k$, and that $\alpha_0$ satisfies the inequality~\eqref{eq:alphacond}. Let
\begin{flalign*}
I_{1}&\de \left\{z=a-(1+\ie)y \colon y\in\left[0,\alpha_0\mu\left(a\right)\right]\right\}, \\
I_{2}&\de \left\{z=x-\alpha_0(1+\ie)\mu(x) \colon x\in\left[a,x_0\right]\right\}, \\
I_{3}&\de \left\{z=x_0+(1+\ie)y \colon y\in\left[-\alpha_0\mu\left(x_0\right),\alpha_0\mu\left(x_0\right)\right]\right\}, \\
I_{4}&\de \left\{z=x+\alpha_0(1+\ie)\mu(x) \colon x\in\left[x_0,b\right]\right\}, \\
I_{5}&\de \left\{z=b-(1+\ie)y \colon y\in\left[-\alpha_0\mu\left(b\right),0\right]\right\},
\end{flalign*}
see Figure~\ref{pic:intervals}.

\begin{figure}[h]
    \includegraphics[scale = 0.5]{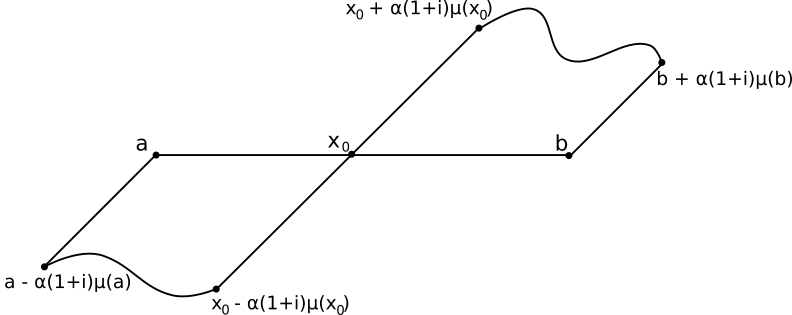}
    \caption{Paths $I_j$ for $j\in\left\{ 1,2,\dots,5\right\}$.}
    \label{pic:intervals}
\end{figure}

For integers $j$, $1\leq j\leq 5$, let us define
\[
\mathcal{I}_j \de \int_{I_j}\varphi(z)e^{2\pi\ie\left(f(z)+kz\right)}\dif{z}.
\]
From~\eqref{eq:alphacond}, $\alpha_0 < \frac{1}{\sqrt{2}}$, so $|\alpha_0(1 + i)\mu(x)| < \mu(x)$ for every $x\in[a,b]$. Hence, $\bigcup_{j=1}^5 I_j$ is an oriented path from $a$ to $b$ in a simply connected domain $\bigcup_{x\in[a,b]} \cD{x,\mu(x)}$ on which $\varphi$ and $f$ are holomorphic functions. Cauchy's theorem then guarantees that $\mathcal{E}\left(a,b,k;\varphi,f\right)=\sum_{j=1}^5\mathcal{I}_j$. Write $\mathcal{R}_1 \de \mathcal{I}_1+\mathcal{I}_5$, $\mathcal{R}_2 \de \mathcal{I}_2+\mathcal{I}_4$, and
\[
\mathcal{R}_3 \de  \mathcal{I}_3 - \frac{\varphi\left(x_0\right)}{\sqrt{f''\left(x_0\right)}}e^{2\pi\ie\left(f\left(x_0\right)+kx_0+\frac{1}{8}\right)}.
\]
With such definitions equation~\eqref{eq:Emain} is true. In the first two paragraphs we are going to prove the stated upper bounds for $\left|\mathcal{R}_1\right|$ and $\left|\mathcal{R}_2\right|$, while the last paragraph is devoted to the most difficult case $\mathcal{R}_3$.

Let $x\in[a,b]$ and $z=x+(1+\ie)y$ with $|y|\leq \alpha_0\mu(x)$. Lemma~\ref{lem:taylor} with $r_1=\alpha_0\sqrt{2}\mu(x)$, $r_2=\mu(x)$, $z_0=x$, $k=2$, and $M_2=A_2F(x)/\mu(x)$ gives
\[
f(z)+kz=f(x)+kx+(1+\ie)\left(f'(x)+k\right)y+\ie f''(x)y^2+\theta_1(y)
\]
with $\left|\theta_1(y)\right|\leq \beta f''(x)y^2$, where $\beta=\beta\left(A_2,A_3,\alpha_0\right)$ is defined by~\eqref{eq:beta}. Observe that~\eqref{eq:alphacond} guarantees that $0<\beta<1$. This implies
\[
\Re\left\{2\pi\ie\left(f(z)+kz\right)\right\} \leq -2\pi\left(f'(x)+k\right)y-2\pi\left(1-\beta\right)f''(x)y^2.
\]
From this inequality we obtain
\begin{gather*}
\left|\mathcal{I}_1\right| \leq A_1\sqrt{2}\Phi(a)\int_{0}^{\alpha\mu(a)} \exp{\left(2\pi\left(f'(a)+k\right)y-2\pi\left(1-\beta\right)f''(a)y^2\right)}\dif{y}, \\
\left|\mathcal{I}_5\right| \leq A_1\sqrt{2}\Phi(b)\int_{-\alpha\mu(b)}^{0} \exp{\left(2\pi\left(f'(b)+k\right)y-2\pi\left(1-\beta\right)f''(b)y^2\right)}\dif{y}.
\end{gather*}
Remembering that $f'(a)+k\leq 0$ and $f'(b)+k \geq 0$, extending the limits of integration, and using Lemma~\ref{lem:integral}, we obtain the upper bound for $\left|\mathcal{I}_1+\mathcal{I}_5\right| = \left|\mathcal{R}_1\right|$ as stated in~\eqref{eq:R1}.

Arguing as before, integration along $I_2$ and $I_4$ gives
\[
\left|\mathcal{I}_2+\mathcal{I}_4\right| \leq A_1\int_{a}^{b} \Phi(x)e^{g(x)}\left(1+\alpha_0\sqrt{2}\left|\mu'(x)\right|\right)\dif{x},
\]
where
\[
g(x) \de -2\pi\alpha_0\left|f'(x)+k\right|\mu(x)-2\pi\left(1-\beta\right)f''(x)\left(\alpha_0\mu(x)\right)^2.
\]
We always have $\left(\alpha_0\mu(x)\right)^2f''(x)\geq \alpha_0^2F(x)/A_3$, which gives
\[
-2\pi\left(1-\beta\right)f''(x)\left(\alpha_0\mu(x)\right)^2 \leq -B_2\left(1+2A_2\right)F(x),
\]
where $B_2=B_2\left(A_2,A_3,\alpha_0\right)$ is defined by~\eqref{eq:B2}. Assume that $|k|\leq 2\left|f'(x)\right|$. Then
\[
-2\pi\left(1-\beta\right)f''(x)\left(\alpha_0\mu(x)\right)^2 \leq B_2\left(-|k|\mu(x)-F(x)\right),
\]
obviously with the same upper bound also for $g(x)$. Now assume that $|k|>2\left|f'(x)\right|$. This gives $\alpha_0\left|f'(x)+k\right|\mu(x)>\frac{\alpha_0}{2}|k|\mu(x)$, which further implies
\[
g(x) \leq -\pi\alpha_0|k|\mu(x) - B_2F(x).
\]
Because $\alpha_0\leq A_3\left(1+2A_2\right)/2$, we have $A_3\left(1+2A_2\right)\geq 2\alpha_0\left(1-\beta\right)$ and thus $\pi\alpha_0\geq B_2$. Therefore, $g(x)\leq B_2\left(-|k|\mu(x)-F(x)\right)$ also in this case. This gives the upper bound for $\left|\mathcal{I}_2+\mathcal{I}_4\right|=\left|\mathcal{R}_2\right|$ as stated in~\eqref{eq:R2}.

For the sake of brevity we shall write $\Phi\left(x_0\right)=\Phi_0$, $F\left(x_0\right)=F_0$ and $\mu\left(x_0\right)=\mu_0$. Let $v\de \alpha_0\mu_0/\left(1+F_0^{1/3}\right)$. Divide the integration along $I_3$ into three parts, $I_{31}$, $I_{32}$ and $I_{33}$, such that
\begin{flalign*}
I_{31}&\de \left\{z=x_0+(1+\ie)y \colon y\in\left[-\alpha_0\mu_0,-v\right]\right\}, \\
I_{32}&\de \left\{z=x_0+(1+\ie)y \colon y\in[-v,v]\right\}, \\
I_{33}&\de \left\{z=x_0+(1+\ie)y \colon y\in\left[v,\alpha_0\mu_0\right]\right\}.
\end{flalign*}
Denote by $\mathcal{I}_{31}$, $\mathcal{I}_{32}$ and $\mathcal{I}_{33}$ the corresponding integrals, therefore $\mathcal{I}_3=\mathcal{I}_{31}+\mathcal{I}_{32}+\mathcal{I}_{33}$. We have
\[
\left|\mathcal{I}_{31}+\mathcal{I}_{33}\right| \leq 2\sqrt{2}A_1\Phi_0\int_{v}^{\alpha_0\mu_0} \exp{\left(-2\pi\left(1-\beta\right)f''\left(x_0\right)y^2\right)}\dif{y}.
\]
Introducing the new variable $u=2\pi\left(1-\beta\right)f''\left(x_0\right)y^2$, we have
\begin{flalign*}
\left|\mathcal{I}_{31}+\mathcal{I}_{33}\right| &\leq 2\sqrt{2}A_1\Phi_0 \int_{2\pi\left(1-\beta\right)f''\left(x_0\right)v^2}^{\infty} \frac{e^{-u}\dif{u}}{\sqrt{8\pi(1-\beta)f''\left(x_0\right)u}} \\
&\leq \frac{A_1A_3\sqrt{2}\Phi_0\mu_0}{2\pi\alpha_0\left(1-\beta\right)}\frac{1+F_0^{\frac{1}{3}}}{F_0}\exp{\left(-\frac{2\pi\alpha_0^2\left(1-\beta\right)F_0}{A_3\left(1+F_0^{1/3}\right)^2}\right)}.
\end{flalign*}
Here we used the inequalities
\[
\frac{1}{f''\left(x_0\right)v} \leq \frac{A_3\mu_0\left(1+F_0^{1/3}\right)}{\alpha_0 F_0}, \quad f''\left(x_0\right)v^2 \geq \frac{\alpha_0^2 F_0}{A_3\left(1+F_0^{1/3}\right)^2},
\]
which follow from the third condition in the proposition and from the definition of $v$. Assume that $F_0\geq1$. Using the inequality $x^{-\frac{2}{3}}e^{-A\sqrt[3]{x}} \leq 6A^{-3}x^{-\frac{3}{2}}$, valid for $x\geq1$ and $A>0$, we obtain
\begin{equation}
\label{eq:I31I33}
\left|\mathcal{I}_{31}+\mathcal{I}_{33}\right| \leq \frac{48\sqrt{2}A_1A_3^4}{\pi^4\alpha_0^7\left(1-\beta\right)^4}\Phi_0\mu_0F_0^{-\frac{3}{2}}.
\end{equation}
Now assume that $0<F_0<1$. Then
\[
\left|\mathcal{I}_{31}+\mathcal{I}_{33}\right| \leq \frac{\sqrt{2}A_1A_3}{\pi\alpha_0\left(1-\beta\right)}\Phi_0\mu_0F_0^{-\frac{3}{2}}
\]
since $F_0^{-1} \leq F_0^{-\frac{3}{2}}$. By~\eqref{eq:alphacond} we always have~\eqref{eq:I31I33}. Now we focus on $\mathcal{I}_{32}$. Let $z=x_0+(1+\ie)y$ with $|y|\leq v$. Lemma~\ref{lem:taylor} with $r_1=v\sqrt{2}$, $r_2=\mu_0$, $z_0=x_0$, $k=3$, and $M_2=A_2F_0/\mu_0$ gives
\[
f(z)+kz = f\left(x_0\right)+kx_0+\ie f''\left(x_0\right)y^2 - \frac{1-\ie}{3}f'''\left(x_0\right)y^3 + \theta_2(y)
\]
with
\[
\left|\theta_2(y)\right|\leq \frac{A_2F_0}{\mu_0^4\left(1-\alpha_0\sqrt{2}\right)}y^4.
\]
By the ordinary Taylor's formula we also have $e^{w}=1+w+\theta_3(w)$ with $\left|\theta_3(w)\right|\leq \frac{1}{2}e^{|w|}|w|^2$ for $w\in\C$. Lemma~\ref{lem:taylor} with $r_1=v\sqrt{2}$, $r_2=\mu_0$, $z_0=x_0$, $k=1$, and $M_1=A_1\Phi_0$ also implies
\[
\varphi(z) = \varphi\left(x_0\right) + (1+\ie)\varphi'\left(x_0\right)y + \theta_4(y), \quad \left|\theta_4(y)\right| \leq \frac{2A_1\Phi_0}{\mu_0^2\left(1-\alpha_0\sqrt{2}\right)}y^2.
\]
Define $w\de-\left(2\pi/3\right)(1+\ie)f'''\left(x_0\right)y^3+2\pi\ie\theta_2(y)$. Then
\begin{flalign}
\label{eq:mainthird}
\varphi(z)e^{2\pi\ie\left(f(z)+kz\right)} &= e^{2\pi\ie\left(f\left(x_0\right)+kx_0\right)}\Bigg(\varphi\left(x_0\right)e^{-2\pi f''\left(x_0\right)y^2} \nonumber \\
&+ (1+\ie)\left(\varphi'\left(x_0\right)y - \frac{2\pi}{3}f'''\left(x_0\right)\varphi\left(x_0\right)y^3\right)e^{-2\pi f''\left(x_0\right)y^2} \nonumber \\
&+ \Omega(y)e^{-2\pi f''\left(x_0\right)y^2} \Bigg),
\end{flalign}
where
\begin{flalign*}
\Omega(y) \de \left(2\pi\ie\theta_2(y)+\theta_3(w)\right)\varphi\left(x_0\right) &+ (1+\ie)\varphi'\left(x_0\right)\left(w+\theta_3(w)\right)y \\
&+ \left(1+w+\theta_3(w)\right)\theta_4(y).
\end{flalign*}
Since the middle term in~\eqref{eq:mainthird} is an odd function in the variable $y$, we obtain
\[
\mathcal{I}_{32} = \frac{\varphi\left(x_0\right)}{\sqrt{f''\left(x_0\right)}}e^{2\pi\ie\left(f\left(x_0\right)+kx_0+\frac{1}{8}\right)} + \widetilde{\mathcal{R}}_{31} + \widetilde{\mathcal{R}}_{32},
\]
where
\begin{gather*}
\widetilde{\mathcal{R}}_{31} \de -2\sqrt{2}e^{2\pi\ie\left(f\left(x_0\right)+kx_0+\frac{1}{8}\right)}\varphi\left(x_0\right)\int_{v}^{\infty}e^{-2\pi f''\left(x_0\right)y^2}\dif{y}, \\
\widetilde{\mathcal{R}}_{32} \de (1+\ie) e^{2\pi\ie\left(f\left(x_0\right)+kx_0\right)} \int_{-v}^{v}\Omega(y)e^{-2\pi f''\left(x_0\right)y^2}\dif{y}.
\end{gather*}
With this definitions we have $\mathcal{R}_3=\mathcal{I}_{31}+\mathcal{I}_{33}+\widetilde{\mathcal{R}}_{31} + \widetilde{\mathcal{R}}_{32}$, where $\mathcal{R}_3$ is from~\eqref{eq:Emain}. Arguing as for the bound of $\left|\mathcal{I}_{31}+\mathcal{I}_{33}\right|$, we see that
\[
\left|\widetilde{\mathcal{R}}_{31}\right| \leq \frac{48\sqrt{2}A_1A_3^4}{\pi^4\alpha_0^7}\Phi_0\mu_0F_0^{-\frac{3}{2}},
\]
which implies
\begin{equation}
\label{eq:part1}
\left|\mathcal{I}_{31}+\mathcal{I}_{33}+\widetilde{\mathcal{R}}_{31}\right| \leq A_1A_3^4B_3\Phi_0\mu_0F_0^{-\frac{3}{2}},
\end{equation}
where $B_3=B_3\left(A_2,A_3,\alpha_0\right)$ is defined by~\eqref{eq:B3}. Therefore, it remains to estimate $\widetilde{\mathcal{R}}_{32}$. Because $\left|\varphi'\left(x_0\right)\right|\leq A_1\Phi_0/\mu_0$, we have
\begin{flalign}
\label{eq:Omega1}
\left|\Omega(y)\right| &\leq \frac{2\pi A_1A_2\Phi_0 F_0}{\left(1-\alpha_0\sqrt{2}\right)\mu_0^4}y^4 + A_1\Phi_0\left|\theta_3(w)\right| +   \frac{A_1\Phi_0\sqrt{2}}{\mu_0}|y|\left(|w|+\left|\theta_3(w)\right|\right) \nonumber \\
&+\frac{2A_1\Phi_0}{\left(1-\alpha_0\sqrt{2}\right)\mu_0^2}\left(1+|w|+\left|\theta_3(w)\right|\right)y^2.
\end{flalign}
Because $\left|f'''\left(x_0\right)\right|\leq 2A_2F_0/\mu_0^3$, we have
\begin{flalign}
|w| &\leq \frac{4\pi\sqrt{2}A_2F_0}{3\mu_0^3}|y|^3 + \frac{2\pi A_2F_0}{\left(1-\alpha_0\sqrt{2}\right)\mu_0^4}y^4 \nonumber \\
&\leq \pi\alpha_0^3A_2\left(\frac{4\sqrt{2}}{3}+\frac{27\alpha_0}{2^7\left(1-\alpha_0\sqrt{2}\right)}\right)\ed \omega_1\left(A_2,\alpha_0\right), \label{eq:omega1}
\end{flalign}
where we also used the fact that $2F_0/\left(1+F_0^{1/3}\right)^{4}$ is not greater than $27/2^7$ since $F_0\geq0$. Both previous inequalities imply
\begin{equation}
\label{eq:theta3}
\left|\theta_3(w)\right| \leq \omega_2\left(A_2,\alpha_0\right)\frac{F_0^2}{\mu_0^6} y^6,
\end{equation}
where
\begin{equation}
\label{eq:omega2}
\omega_2\left(A_2,\alpha_0\right) \de e^{\omega_1\left(A_2,\alpha_0\right)}\left(\frac{4\pi A_2}{3}\right)^2\left(1+\frac{3\alpha_0}{\sqrt{2}-2\alpha_0}\left(1+\frac{3\sqrt{2}\alpha_0}{8\left(1-\alpha_0\sqrt{2}\right)}\right)\right),
\end{equation}
and $\omega_1\left(A_2,\alpha_0\right)$ is defined by~\eqref{eq:omega1}. Therefore,
\begin{equation}
\label{eq:wplustheta}
|w|+\left|\theta_3(w)\right| \leq \omega_3\left(A_2,\alpha_0\right)\frac{F_0}{\mu_0^3}|y|^3 \leq \alpha_0^3\omega_3\left(A_2,\alpha_0\right),
\end{equation}
where
\begin{equation}
\label{eq:omega3}
\omega_3\left(A_2,\alpha_0\right) \de \frac{4\pi\sqrt{2}A_2}{3}+\frac{2\pi\alpha_0 A_2}{1-\alpha_0\sqrt{2}}+\alpha_0^3\omega_2\left(A_2,\alpha_0\right).
\end{equation}
Taking into account~\eqref{eq:theta3} and the first inequality in~\eqref{eq:wplustheta}, together with~\eqref{eq:Omega1} we finally arrive to
\[
\left|\Omega(y)\right| \leq \frac{2A_1\left(1+\alpha_0^3\omega_3\right)}{1-\alpha_0\sqrt{2}}\frac{\Phi_0}{\mu_0^2}y^2 + \left(\frac{2\pi A_1A_2}{1-\alpha_0\sqrt{2}}+A_1\omega_3\sqrt{2}\right)\frac{\Phi_0 F_0}{\mu_0^4}y^4 + A_1\omega_2\frac{\Phi_0F_0^2}{\mu_0^6}y^6.
\]
Using
\begin{flalign*}
\int_{-\infty}^{\infty} |y|^{m}e^{-2\pi f''\left(x_0\right)y^2}\dif{y} &= \left(2\pi f''\left(x_0\right)\right)^{-\frac{m+1}{2}}\Gamma\left(\frac{m+1}{2}\right) \\
&\leq \left(\frac{2\pi}{A_3}\right)^{-\frac{m+1}{2}}\Gamma\left(\frac{m+1}{2}\right) \mu_0^{m+1}F_0^{-\frac{m+1}{2}}
\end{flalign*}
for $m\geq1$, see~\cite[Equation \textbf{3.326}(2)]{GradRyz}, we obtain at once that
\begin{equation}
\label{eq:part2}
\left|\widetilde{\mathcal{R}}_{32}\right| \leq A_1\sqrt{A_3}B_4\Phi_0\mu_0F_0^{-\frac{3}{2}},
\end{equation}
where $B_4=B_4\left(A_2,A_3,\alpha_0\right)$ is defined by~\eqref{eq:B4}. Inequalities~\eqref{eq:part1} and~\eqref{eq:part2} give the final bound in~\eqref{eq:R3}, which concludes the proof in the case that there exists a zero $x_0\in[a,b]$ of $f'(x)+k$. If this is not the case, then we already proved that $\mathcal{E}(a,b,k;\varphi,f)=\mathcal{R}_1+\mathcal{R}_2$, with the same bounds~\eqref{eq:R1} and~\eqref{eq:R2} if $f'(a)+k<0$. But if $f'(a)+k>0$, then we integrate in a similar fashion from $a$ to $a+(1+\ie)\alpha_0\mu(a)$ to $b+(1+\ie)\alpha_0\mu(b)$ to $b$, along $x+(1+\ie)\alpha_0\mu(x)$ in the middle. We can easily observe that the stated bounds are also true in this case. The proof is thus complete.
\end{proof}

\section{Estimation of two integrals}
\label{sec:TwoIntegrals}

A first glance at the terms $\mathcal{E}_{j}$ reveals that we deal with two main types of integrals, namely the one in the first three terms, and the other one in the last term. In this section we estimate these two families by means of Proposition~\ref{prop:saddle}.

\begin{proposition}
\label{prop:FirstIntegral}
Let $\alpha, \beta, \gamma, a, b, k, T, A',A''$ be positive real numbers such that $\alpha \ne 1$, $0<a\leq\min\left\{1,T/\left(8\pi k\right)\right\}$, $b \geq T$, $k \geq 1$, and $T\geq T_0\geq 1$. Additionally, let $1\leq k\leq A''T$ or $k\geq A'T$. Define
\begin{gather*}
U(T,k) \de \sqrt{\frac{T}{2\pi k} + \frac{1}{4}}, \quad \eta(k)\de U(1,k)-\frac{1}{2}, \\
\delta(T) \de \left\{
\begin{array}{ll}
    \log{T}, & \gamma-\alpha-\beta=-1,  \\
    \frac{T^{1+\gamma-\alpha-\beta}-1}{1+\gamma-\alpha-\beta}, &  \gamma-\alpha-\beta\neq -1.
\end{array}
\right.
\end{gather*}
Then
\begin{flalign}
\label{eq:FirstIntegral}
& \int_{a}^{b} {y^{-\alpha}(1+y)^{-\beta}\left(\log{\frac{1+y}{y}}\right)^{-\gamma} \exp\left(\ie\left(T \log{\frac{1+y}{y}} + 2\pi ky\right)\right) \dif{y}} \nonumber \\
&= 2^{-\frac{1}{2}-\gamma}k^{-\frac{3}{4}}\left(\frac{T}{2\pi}\right)^{\frac{1}{4}}\left(1+\frac{k\pi}{2T}\right)^{-\frac{1}{4}}\left(U(T,k) - \frac{1}{2}\right)^{-\alpha} \left(U(T,k) + \frac{1}{2}\right)^{-\beta} \times \nonumber \\
&\times\left( \arsinh\sqrt{\frac{\pi k}{2 T}}\right)^{-\gamma}\exp\left(\ie\left(2T \arsinh\sqrt{\frac{\pi k}{2 T}} + \sqrt{\left(\pi k\right)^2+2\pi kT} - \pi k + \frac{\pi }{4}\right)\right) \nonumber \\
&+ \mathcal{U}\left(\alpha,\beta,\gamma,a,b,k,T,A',A''\right),
\end{flalign}
where
\[
\left|\mathcal{U}\right| \leq \mathcal{U}_1a^{1-\alpha}T^{-1} + \mathcal{U}_2b^{\gamma-\alpha-\beta} k^{-1} + \mathcal{U}_3 + \mathcal{U}_4,
\]
and
\begin{gather}
\mathcal{U}_1\left(\alpha,\beta,\gamma,a\right) \de 2^{\left|\gamma-\beta\right|+1+\alpha+\beta-\gamma}3^{\gamma}\left(\frac{34.26(1+a)}{3-a} + \frac{39038}{\left|\alpha-1\right|}\right), \label{eq:U1} \\
\mathcal{U}_2 \left(\alpha,\beta,\gamma\right) \de 2.962\cdot2^{\left|\gamma-\beta\right|+\alpha+\beta-\gamma}3^{\gamma}, \label{eq:U2}
\end{gather}
\begin{flalign}
\label{eq:U3}
\mathcal{U}_3\left(\alpha,\beta,\gamma,b,k,T\right) &\de 2^{\left|\gamma-\beta\right|+\alpha+\beta-\gamma}3^{\gamma}\bigg(1.02\delta(T)\exp{\left(-9.58\cdot10^{-6}\sqrt{Tk}\right)} \nonumber \\
&+ \frac{212232.3}{k}\max\left\{b^{\gamma-\alpha-\beta},T^{\gamma-\alpha-\beta}\right\}\exp{\left(-4.79\cdot10^{-6}Tk\right)}\bigg),
\end{flalign}
together with: If $1\leq k\leq A''T$, then
\begin{equation*}
\mathcal{U}_4\left(\alpha,\beta,\gamma,A'',k,T\right) \de 2^{\alpha+\beta-\gamma}3^{\gamma}\left(\mathcal{U}_{41} +\mathcal{U}_{42}\right)T^{\frac{1}{2}\left(\gamma-\alpha-\beta\right)-\frac{1}{4}}k^{-\frac{1}{2}\left(\gamma-\alpha-\beta\right)-\frac{5}{4}},
\end{equation*}
where
\begin{multline}
\label{eq:U41}
\mathcal{U}_{41}\left(\alpha,\beta,\gamma,A''\right)\de \frac{1.02\left(A''\right)^{\frac{5}{4}}\max\left\{1,\left(A''\right)^{\frac{1}{2}(\gamma-\alpha-\beta)}\right\}}{\left|\alpha-1\right|} \\
\times 2^{\left|\gamma-\beta\right|}\left(38386.4\left(\left|\gamma-\alpha-\beta\right|+3\right)\right)^{\frac{1}{2}\left|\gamma-\alpha-\beta\right|+\frac{3}{2}},
\end{multline}
\begin{multline}
\label{eq:U42}
\mathcal{U}_{42}\left(\alpha,\beta,\gamma,A''\right)\de 8.75\cdot10^{19}\left(1+\frac{1}{\eta\left(A''\right)}\right)^{\left|\gamma-\beta\right|}\left(\sqrt{A''}+\frac{1}{\sqrt{2\pi}}\right)^{\frac{3}{2}} \\
\times \max\left\{(2\pi)^{-\frac{1}{2}\left(\gamma-\alpha-\beta\right)},\left(\sqrt{A''}\eta\left(A''\right)\right)^{\gamma-\alpha-\beta}\right\}.
\end{multline}
If $k\geq A'T$, then
\begin{equation*}
\mathcal{U}_4 \left(\alpha,\beta,\gamma,A',T_0,k,T\right) \de 2^{\alpha+\beta-\gamma}3^{\gamma}\left(\mathcal{U}_{43}
+\mathcal{U}_{44}\right)T^{-\frac{1}{2}-\alpha}k^{\alpha-1},
\end{equation*}
where
\begin{gather}
\mathcal{U}_{43}\left(\alpha,\beta,\gamma,A',T_0\right)\de 2^{\left|\gamma-\beta\right|}\left(A'\right)^{-\alpha}\left(\frac{3.975\cdot10^7}{\left|\alpha-1\right|}A'+\frac{1.63\cdot10^{10}}{\sqrt{T_0}}\right), \label{eq:U43} \\
\mathcal{U}_{44}\left(\alpha,\beta,\gamma,A'\right)\de 3.5\cdot10^{19}\left(A'\eta\left(A'\right)\right)^{-\alpha}\left(1+\frac{1}{2\pi A'}\right)^{\left|\gamma-\beta\right|+\frac{3}{2}}. \label{eq:U44}
\end{gather}
We also have
\begin{equation}
\label{eq:FirstIntegral2}
\int_{a}^{b} {y^{-\alpha}(1+y)^{-\beta}\left(\log{\frac{1+y}{y}}\right)^{-\gamma} \exp\left(\ie\left(T \log{\frac{1+y}{y}} - 2\pi ky\right)\right) \dif{y}} = \widehat{\mathcal{U}},
\end{equation}
where
\[
\left|\widehat{\mathcal{U}}\right| \leq \mathcal{U}_1a^{1-\alpha}T^{-1} + \mathcal{U}_2b^{\gamma-\alpha-\beta} k^{-1} + \mathcal{U}_3 + \widehat{\mathcal{U}}_4,
\]
together with: If $1\leq k\leq A''T$, then
\begin{equation*}
\widehat{\mathcal{U}}_4\left(\alpha,\beta,\gamma,A'',k,T\right) \de 2^{\alpha+\beta-\gamma}3^{\gamma}\mathcal{U}_{41}T^{\frac{1}{2}\left(\gamma-\alpha-\beta\right)-\frac{1}{4}}k^{-\frac{1}{2}\left(\gamma-\alpha-\beta\right)-\frac{5}{4}}.
\end{equation*}
If $k\geq A'T$, then
\begin{equation*}
\widehat{\mathcal{U}}_4\left(\alpha,\beta,\gamma,A',T_0,k,T\right) \de 2^{\alpha+\beta-\gamma}3^{\gamma}\mathcal{U}_{43}T^{-\frac{1}{2}-\alpha}k^{\alpha-1}.
\end{equation*}
\end{proposition}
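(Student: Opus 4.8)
The plan is to read the integrals in \eqref{eq:FirstIntegral} and \eqref{eq:FirstIntegral2} as the oscillatory integrals $\mathcal{E}(a,b,\pm k;\varphi,f)$ of Proposition~\ref{prop:saddle}, with
\[
f(y) = \frac{T}{2\pi}\log\frac{1+y}{y}, \qquad \varphi(y) = y^{-\alpha}(1+y)^{-\beta}\left(\log\frac{1+y}{y}\right)^{-\gamma},
\]
and then to make all quantities in its conclusion explicit. Since $f'(y) = -\tfrac{T}{2\pi y(1+y)}$, the equation $f'(y)+k=0$ has the unique positive root $x_0 = U(T,k)-\tfrac12$, and I would first check $x_0\in[a,b]$: writing $s = T/(2\pi k)$, the elementary inequality $\sqrt{s+\tfrac14}-\tfrac12 \ge \min\{1,s/4\}$ together with $a\le\min\{1,T/(8\pi k)\}$ gives $x_0\ge a$, while $k\ge1$ and $T\ge T_0\ge1$ give $x_0 \le \sqrt{T/(2\pi)+\tfrac14} \le T \le b$. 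For \eqref{eq:FirstIntegral2} the relevant phase derivative is $f'(y)-k$, negative throughout $[a,b]$, so the second alternative of Proposition~\ref{prop:saddle} applies and no main term is present --- this is why \eqref{eq:FirstIntegral2} has no closed-form contribution. The main term of \eqref{eq:FirstIntegral} is obtained by substituting $x_0$ into $\varphi(x_0)f''(x_0)^{-1/2}e^{2\pi\ie(f(x_0)+kx_0+1/8)}$ and using $x_0(1+x_0)=\tfrac{T}{2\pi k}$, $1+2x_0=2U(T,k)$, together with the identities $\log\tfrac{U+1/2}{U-1/2}=2\arsinh\sqrt{\pi k/(2T)}$ and $2\pi kx_0=\sqrt{(\pi k)^2+2\pi kT}-\pi k$; this reproduces the displayed closed form, the prefactor $2^{-1/2-\gamma}k^{-3/4}(T/2\pi)^{1/4}(1+\tfrac{k\pi}{2T})^{-1/4}$ coming from $f''(x_0)^{-1/2}=2^{-1/2}k^{-3/4}(T/2\pi)^{1/4}(1+\tfrac{k\pi}{2T})^{-1/4}$ and the factor $2^{-\gamma}$ from the $\arsinh$-identity applied to $\varphi(x_0)$.

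The substantive step is verifying the three hypotheses of Proposition~\ref{prop:saddle} with explicit constants. One has $f''(y)=\tfrac{T}{2\pi}\tfrac{1+2y}{y^2(1+y)^2}>0$ for $y>0$, so (1) holds. For (2) I would take $\mu(x)=cx$ with a fixed $c<\tfrac12$, so each disk $\cD{x,\mu(x)}$ lies in the right half-plane and hence avoids the branch cuts of $y^{-\alpha}$, $(1+y)^{-\beta}$ and $\log\tfrac{1+y}{y}$ along $(-\infty,0]$; on such a disk $|z|\ge(1-c)x$, $|1+z|\ge1+(1-c)x$, $|1+2z|\ge1$. For (3) I would set $\Phi(x)\asymp x^{-\alpha}(1+x)^{\gamma-\beta}$ and $F(x)\asymp T/(1+x)$; the bounds $|f'(z)|\le A_2F(x)/\mu(x)$ and $1/|f''(z)|\le A_3\mu^2(x)/F(x)$ then hold for a fixed pair $A_2,A_3$ with $A_2A_3$ large enough, the compatibility constraint being $\sup_z|f'(z)|/\inf_z|f''(z)|\le A_2A_3\,\mu(x)$, which for $\mu(x)=cx$ reduces to a numerical inequality in $c$. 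The amplitude bound $|\varphi(z)|\le A_1\Phi(x)$ reduces, after the polynomial factors, to a lower bound $\bigl|\log\tfrac{1+z}{z}\bigr|\ge\tfrac{\mathrm{const}}{1+x}$ on the disk, which I would obtain by separating the regimes $|z|\le2$ (where $1+1/z$ lies in the right half-plane at distance $\ge\tfrac12$ from $1$, so the logarithm is bounded away from $0$) and $|z|>2$ (where $\bigl|\log(1+1/z)\bigr|\ge\tfrac12|1/z|$ from the series); the constant $3^\gamma$ in the error terms originates from $2(1+c)\le3$. Finally one fixes any $\alpha_0$ meeting \eqref{eq:alphacond} for these $A_1,A_2,A_3$.

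Proposition~\ref{prop:saddle} then gives \eqref{eq:FirstIntegral} with error $\mathcal{R}_1+\mathcal{R}_2+\mathcal{R}_3$, estimated termwise. For $\mathcal{R}_1$: at $a$, the hypothesis $a\le T/(8\pi k)$ forces $|f'(a)+k|\ge\tfrac{T}{8\pi a}$, so $\Phi(a)/(|f'(a)+k|+\sqrt{f''(a)})\ll a^{1-\alpha}T^{-1}$, giving the $\mathcal{U}_1a^{1-\alpha}T^{-1}$ term; at $b\ge T\ge1$, $|f'(b)+k|+\sqrt{f''(b)}\ge k(1-\tfrac1{2\pi})$ and $\Phi(b)\ll b^{\gamma-\alpha-\beta}$ give the $\mathcal{U}_2b^{\gamma-\alpha-\beta}k^{-1}$ term. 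For $\mathcal{R}_2$: since $k\mu(x)+F(x)\gg kx+\tfrac{T}{1+x}\gg\sqrt{kT}$ uniformly on $[a,b]$, the integrand is exponentially small, and after extracting $\delta(T)$ from $\int^T x^{\gamma-\alpha-\beta}\dif x$ and a tail piece near $b$ one arrives at the $\delta(T)e^{-c\sqrt{Tk}}$ and $e^{-cTk}$ contributions forming $\mathcal{U}_3$. For $\mathcal{R}_3=A_1(A_3^4B_3+\sqrt{A_3}B_4)\Phi(x_0)\mu(x_0)F(x_0)^{-3/2}$: substituting $x_0=U(T,k)-\tfrac12$, $F(x_0)\asymp T/U(T,k)$, and controlling $(\log\tfrac{1+x_0}{x_0})^{-\gamma}$ by the $\arsinh$-identity, one gets a bound of size $T^{\frac12(\gamma-\alpha-\beta)-\frac14}k^{-\frac12(\gamma-\alpha-\beta)-\frac54}$ --- here the case split enters: for $1\le k\le A''T$ one has $U(T,k)\ge U(1,A'')$, hence $x_0\ge\eta(A'')$ and $x_0\le C(A'')\sqrt{T/k}$, whereas for $k\ge A'T$ one uses instead $x_0\asymp T/(2\pi k)\le\tfrac1{2\pi A'}$, producing the alternative form $T^{-1/2-\alpha}k^{\alpha-1}$; the two summands $\mathcal{U}_{41}+\mathcal{U}_{42}$ (resp.\ $\mathcal{U}_{43}+\mathcal{U}_{44}$) of $\mathcal{U}_4$ come from the moderate piece $\sqrt{A_3}B_4$ and the large piece $A_3^4B_3$ of the $\mathcal{R}_3$-constant.

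The integral \eqref{eq:FirstIntegral2} carries only $\mathcal{R}_1+\mathcal{R}_2$, so the same estimates give its bound $\widehat{\mathcal{U}}$, with $\widehat{\mathcal{U}}_4$ differing from $\mathcal{U}_4$ precisely by omission of the $\mathcal{U}_{42}$ (resp.\ $\mathcal{U}_{44}$) piece, the $\mathcal{U}_{41}$ (resp.\ $\mathcal{U}_{43}$) term being retained as a (non-sharp) majorant so that the two bounds keep a parallel shape. The main obstacle throughout is not any single step but the explicit bookkeeping: carrying every constant through the several overlapping regimes ($x$ near $a$, $x$ of order $1$, $x$ near $b$; $k$ small, $k\asymp T$, $k\gg T$; saddle present or absent) in a form clean enough to tabulate, and in particular the uniform lower bound for $\bigl|\log\tfrac{1+z}{z}\bigr|$ on the complex disks, on which the entire amplitude estimate rests.
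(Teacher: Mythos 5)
Your plan follows the same route as the paper: the same $\varphi$, $f$, the choices $\Phi(x)=x^{-\alpha}(1+x)^{\gamma-\beta}$, $F(x)=T/(1+x)$, $\mu(x)\asymp x$ (the paper takes $\mu(x)=x/2$ and gets $A_1=2^{\alpha+\beta-\gamma}3^{\gamma}$, $A_2=1/\pi$, $A_3=27\pi$), the same saddle $x_0=U(T,k)-\tfrac12$ with the same identities for the main term, the same treatment of $\mathcal{R}_1$, and the same dichotomy $1\leq k\leq A''T$ versus $k\geq A'T$ for $\mathcal{R}_3$. So this is not a different method; the issue is whether your error bookkeeping actually yields the stated bound.

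There is a genuine gap in your treatment of $\mathcal{R}_2$. Your claim that $k\mu(x)+F(x)\gg\sqrt{kT}$ uniformly on $[a,b]$ fails on $[a,1]$: there $kx$ can be $O(1)$ (note $ka\leq T/(8\pi)$), so the available decay is only $e^{-cT}$, and when $k\geq A'T$ with $k$ much larger than $T$ this is enormously bigger than $e^{-c\sqrt{kT}}$. Hence the $[a,1]$ part of $\mathcal{R}_2$ cannot be folded into $\mathcal{U}_3$, and it is not in general dominated by the other stated terms either: for $\alpha<1$, $\gamma-\alpha-\beta<0$, $b=T$ and $k$ arbitrarily large, $e^{-cT}$ exceeds $T^{-\frac12-\alpha}k^{\alpha-1}$, $k^{-1}$, and both exponentials in~\eqref{eq:U3}. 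This is exactly the piece the paper handles by splitting $\int_a^1$ (and, when $k\geq A'T$, splitting further at $A'T/k$ so that the factor $e^{-B_2kx/2}$ produces the $k^{\alpha-1}$) and then converting the leftover $e^{-B_2T/2}$ into the stated powers of $T$ and $k$ via the elementary inequality~\eqref{eq:exponential} combined with $k\leq A''T$ (resp.\ $k\geq A'T$); that conversion is the true origin of~\eqref{eq:U41} and~\eqref{eq:U43}. Consequently your attribution of $\mathcal{U}_{41},\mathcal{U}_{43}$ to the $\sqrt{A_3}B_4$ part of $\mathcal{R}_3$, and your reading of their presence in $\widehat{\mathcal{U}}_4$ as a ``non-sharp majorant retained for parallel shape,'' are wrong: they are genuine $\mathcal{R}_2$-contributions, which survive for~\eqref{eq:FirstIntegral2} precisely because $\mathcal{R}_2$ is present even when no saddle exists, while only the $\mathcal{R}_3$-born terms~\eqref{eq:U42} and~\eqref{eq:U44} drop out. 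The remainder of your outline (main-term identities, $\mathcal{R}_1$, $\mathcal{R}_3$ with its two regimes, and the verification of the hypotheses of Proposition~\ref{prop:saddle} including the lower bound for $\left|\log\frac{1+z}{z}\right|$ on the disks) is consistent with the paper's proof, so the fix is to redo $\mathcal{R}_2$ on $[a,1]$ along the lines just described rather than to change the architecture.
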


In the proof we are using Proposition~\ref{prop:saddle} with the following functions:
\begin{gather}
\varphi(z) = z^{-\alpha} (1+z)^{-\beta} \left( \log{\frac{1+z}{z}} \right)^{-\gamma}, \quad f(z) = \frac{T}{2\pi} \log{\frac{1 + z}{z}}, \label{eq:AuxFun1} \\
\Phi(x) = x^{-\alpha} (1 + x)^{\gamma - \beta}, \quad F(x) = \frac{T}{1+x}, \quad \mu(x) = \frac{x}{2}, \label{eq:AuxFun2}
\end{gather}
where we are taking the principal branch of the logarithm. Then $f(z)$ and $\varphi(z)$ are holomorphic functions on the set $\bigcup_{x\in[a,b]}\cD{x,\mu(x)}$, where $a$ and $b$ are from Proposition~\ref{prop:FirstIntegral}. Also, for $x\in[a,b]$ the function $f(x)$ is real and
\[
f''(x) = \frac{T(2x+1)}{2\pi x^2(1+x)^2} > 0.
\]

Firstly, let us provide the constants $A_1$, $A_2$, and $A_3$ from Proposition~\ref{prop:saddle} for this particular set of functions.

\begin{lemma}
\label{lem:ConstantsFirstIntegral}
With $A_1=2^{\alpha+\beta-\gamma}3^{\gamma}$, $A_2=1/\pi$ and $A_3=27\pi$, the conditions of Proposition~\ref{prop:saddle} are true for the functions defined in~\eqref{eq:AuxFun1} and~\eqref{eq:AuxFun2}.
\end{lemma}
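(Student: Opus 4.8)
The plan is to verify, one by one, the three hypotheses of Proposition~\ref{prop:saddle} for the functions in~\eqref{eq:AuxFun1} and~\eqref{eq:AuxFun2}. Hypothesis~(1), and the holomorphy demanded in hypothesis~(2), have just been recorded above: $\mu(x)=x/2$ lies in $\mathcal{C}^1([a,b])$ and is positive (as $a>0$), $f$ is real on $[a,b]$ with $f''(x)>0$ there, and $f,\varphi$ are holomorphic on the tube $\bigcup_{x\in[a,b]}\cD{x,x/2}$. Hence the whole lemma reduces to the three pointwise bounds of hypothesis~(3). The basic input for these is the geometry of the disk: for $z\in\cD{x,x/2}$ one has $x/2\le|z|\le 3x/2$, $1+x/2\le|1+z|\le 1+3x/2$ and $|1+2z|\ge 1+x$; moreover, since the boundary circle $|z-x|=x/2$ meets the real axis in the antipodal points $x/2$ and $3x/2$, its image under $z\mapsto 1/z$ is the circle through $2/x$ and $2/(3x)$, so $1/z\in\cD{4/(3x),2/(3x)}$ and in particular $\Re(1/z)\ge 2/(3x)$.

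For the bound on $\varphi(z)=z^{-\alpha}(1+z)^{-\beta}(\log((1+z)/z))^{-\gamma}$ (principal branches throughout) I would combine $|z^{-\alpha}|=|z|^{-\alpha}\le 2^{\alpha}x^{-\alpha}$, $|(1+z)^{-\beta}|=|1+z|^{-\beta}\le 2^{\beta}(1+x)^{-\beta}$ (using $1+x/2\ge(1+x)/2$), with a lower bound for $|\log(1+1/z)|$. For the latter, $\Re(1+1/z)>1$ gives $|\log(1+1/z)|\ge\log|1+1/z|\ge\log(1+\Re(1/z))\ge\log(1+2/(3x))$, and the elementary inequality $\log(1+2/(3x))\ge 2/(3(1+x))$ for $x>0$ (transparent after the substitution $t=1/x$) then yields $|\log((1+z)/z)|^{-\gamma}\le(3/2)^{\gamma}(1+x)^{\gamma}$. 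Multiplying the three estimates gives $|\varphi(z)|\le 2^{\alpha+\beta}(3/2)^{\gamma}x^{-\alpha}(1+x)^{\gamma-\beta}=A_1\Phi(x)$ with $A_1=2^{\alpha+\beta-\gamma}3^{\gamma}$. The bound on $f'$ is immediate from $f'(z)=-T/(2\pi z(1+z))$: since $|z(1+z)|\ge(x/2)(1+x/2)\ge x(1+x)/4$ we obtain $|f'(z)|\le 2T/(\pi x(1+x))=A_2 F(x)/\mu(x)$ with $A_2=1/\pi$.

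The main obstacle is the lower bound for $|f''(z)|=T|1+2z|/(2\pi|z|^2|1+z|^2)$, and here a naive factor-by-factor estimate is too lossy: the minimum of $|1+2z|$ and the maxima of $|z|,|1+z|$ over $\cD{x,x/2}$ are attained at opposite points of the circle, and combining them produces a constant strictly larger than $27\pi$. I would instead exploit that $f''$ is holomorphic and zero-free on $\{\Re z>0\}\supset\cD{x,x/2}$, so by the minimum modulus principle $|f''(z)|^{-1}$ attains its maximum on the circle $z=x+\tfrac{x}{2}e^{\ie\theta}$. Putting $c=\cos\theta$, there $|z|^2=x^2(\tfrac54+c)$, $|1+z|^2=(1+x)^2+x(1+x)c+\tfrac{x^2}{4}$ and $|1+2z|^2=(1+2x)^2+2x(1+2x)c+x^2$, so the required inequality $|f''(z)|^{-1}\le A_3\mu^2(x)/F(x)=27\pi x^2(1+x)/(4T)$ is equivalent, after squaring, to
\[
64\left(\frac{5}{4}+c\right)^{2}\left((1+x)^{2}+x(1+x)c+\frac{x^{2}}{4}\right)^{2}\leq 729(1+x)^{2}\left((1+2x)^{2}+2x(1+2x)c+x^{2}\right)
\]
for all $c\in[-1,1]$ and $x>0$. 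This I would establish by elementary algebra — for instance, regarding the difference of the two sides as a polynomial in $x$ with coefficients depending on $c$, or checking $c=\pm1$ first and then using that the left-hand side is convex in $c$. Collecting the three verified bounds with $A_1=2^{\alpha+\beta-\gamma}3^{\gamma}$, $A_2=1/\pi$ and $A_3=27\pi$ then completes the proof; the only genuine subtlety is carrying out this last reduction without sacrificing the $(3/2)$-type factor that a crude bound on $|f''|$ would waste.
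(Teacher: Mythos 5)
Your treatment of conditions (1)--(2) and of the first two bounds in (3) is essentially the paper's: same disk geometry, same use of the image of $\cD{x,x/2}$ under $z\mapsto 1/z$ to get $\Re(1/z)\ge 2/(3x)$, and the same elementary inequality $(1+x)\log\left(1+\frac{2}{3x}\right)\ge \frac{2}{3}$. You are also right that the crude factor-by-factor bound on $1/|f''|$ only gives $9\pi(2+3x)^2/(2(1+x)^2)$, which tends to $81\pi/2>27\pi$, so an extra idea is needed. Where you genuinely diverge is in supplying that idea: the paper does not pass to the boundary circle, but instead factors
\[
\frac{1}{\left|f''(z)\right|}=\frac{2\pi}{T}\,|z|^2\cdot|1+z|\cdot\left|\frac{1}{2}+\frac{1}{2+4z}\right|
\]
and bounds the last factor by $\frac{1}{2}+\frac{1}{2+2x}$ using that $z\mapsto 1/(2+4z)$ maps $\cD{x,x/2}$ onto an explicit disk; keeping only one factor of $|1+z|$ outside this M\"obius image is precisely what recovers the missing $(1+x)$ in the denominator and in fact yields $F(x)/\left(|f''(z)|\mu^2(x)\right)\le 9\pi(2+3x)(2+x)/\left(2(1+x)^2\right)\le 18\pi$, comfortably below $27\pi$. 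Your alternative---maximum modulus for $1/f''$ (legitimate, since $f''$ is holomorphic and zero-free on $\{\Re z>0\}\supset\cD{x,x/2}$) followed by an explicit inequality on the circle---is also sound, and your proposed completion does go through: writing $Q(c)=\left(\frac{5}{4}+c\right)\left((1+x)^2+x(1+x)c+\frac{x^2}{4}\right)$, the left-hand side is $64\,Q(c)^2$ with $Q>0$ and $Q''=2x(1+x)>0$ on $[-1,1]$, hence convex, while the right-hand side is affine in $c$; the endpoint checks reduce to $(2+3x)^2\le 6(1+x)(1+3x)$ at $c=1$ and $(2+x)^4\le 2916(1+x)^4$ at $c=-1$, both immediate. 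The trade-off is that the paper's factorization is shorter and gives a sharper constant ($18\pi$), whereas your route is more mechanical and would adapt to cases with no convenient algebraic splitting of $f''$---but as written it still owes the reader those endpoint computations, which you should include to make the argument complete.
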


\begin{proof}
We are going to show that $\left|\varphi(z)\right|\leq A_1\Phi(x)$, $\left|f'(z)\right| \leq A_2 F(x)/\mu(x)$, and $1/|f''(z)| \leq A_3\mu^2(x)/F(x)$ for $x\in[a,b]$ and $z\in\cD{x,x/2}$.

As $x/2\leq\Re\{z\}\leq|z|$ and $1+2/(3x)\leq 1+\Re\{1/z\}\leq\left|1+1/z\right|$, where we also used the fact that $z\mapsto 1/z$ maps $\cD{x,x/2}$ bijectively to $\cD{4/(3x),2/(3x)}$, we have
\[
\left|\varphi(z)\right| = |z|^{-\alpha} |1+z|^{-\beta} \left| \log{\frac{1+z}{z}} \right|^{-\gamma} \leq \left(\frac{x}{2}\right)^{-\alpha} \left(1 + \frac{x}{2}\right)^{-\beta} \log\left( 1 + \frac{2}{3x} \right)^{-\gamma}.
\]
Therefore,
\[
\frac{\left|\varphi(z)\right|}{\Phi(x)} \leq 2^{\alpha} \left(\frac{1+x}{1+ x/2}\right)^{\beta}\left( (1+x)\log{\left(1+\frac{2}{3x}\right)} \right)^{-\gamma} \leq 2^{\alpha}2^{\beta}\left(\frac{2}{3}\right)^{-\gamma},
\]
thus proving the first bound.

Similarly,
\[
\left|f'(z)\right| = \frac{T}{2\pi|z|\cdot|1+z|} \leq \frac{T}{\pi x(1+\frac{x}{2})}.
\]
Therefore,
\[
\frac{\left|f'(z)\right|\mu(x)}{F(x)} \leq \frac{1+x}{\pi(2+x)} \leq \frac{1}{\pi},
\]
thus proving the second bound.

Proceeding to the third constant, we have
\begin{flalign*}
\frac{1}{\left|f''(z)\right|} &= \frac{2\pi}{T}\left|\frac{z^2(1+z)^2}{2z+1}\right| = \frac{2\pi}{T}|z|^2\cdot|1+z|\cdot\left|\frac{1}{2}+\frac{1}{2+4z}\right| \\
&\leq \frac{2\pi}{T}\left(\frac{3x}{2}\right)^2\left(1+\frac{3x}{2}\right)\left(\frac{1}{2}+\frac{1}{2+2x}\right) \leq \frac{9\pi\left(2+3x\right)}{4T}x^2
\end{flalign*}
since $z\mapsto1/(2+4z)$ maps $\cD{x,x/2}$ bijectively to
\[
\cD{\frac{1+2x}{2(1+x)(1+3x)},\frac{x}{2(1+x)(1+3x)}}.
\]
Therefore,
\[
\frac{F(x)}{\left|f''(z)\right|\mu^2(x)} \leq 9\pi\frac{2+3x}{1+x}\leq 27\pi,
\]
thus completing the proof of the lemma.
\end{proof}

\begin{proof}[Proof of Proposition~\ref{prop:FirstIntegral}]
Let
\[
x_0 \de U(T,k) - \frac{1}{2}.
\]
Then $f'\left(x_0\right)=-k$ and $x_0 \in [a,b]$. According to Proposition~\ref{prop:saddle}, we have
\begin{equation}
\label{eq:EFirstIntegral}
\mathcal{E}\left(a,b,k;\varphi,f\right)= \frac{\varphi\left(x_0\right)}{\sqrt{f''\left(x_0\right)}}e^{2\pi\ie\left(f\left(x_0\right)+kx_0+\frac{1}{8}\right)} + \mathcal{R}_1 + \mathcal{R}_2 + \mathcal{R}_3,
\end{equation}
where $\mathcal{E}\left(a,b,k;\varphi,f\right)$ is exactly the integral from Proposition~\ref{prop:FirstIntegral}. We need to estimate each remainder term in~\eqref{eq:EFirstIntegral}. We are using estimates~\eqref{eq:R1},~\eqref{eq:R2} and~\eqref{eq:R3} with
\begin{equation*}
0 < \alpha_0 < \min\left\{ \frac{1}{19\sqrt{2}}, 27\left(1+ \frac{\pi}{2}\right), 3\cdot 6^{2/3} \right\} = \frac{1}{19\sqrt{2}}
\end{equation*}
and constants $A_1$, $A_2$, and $A_3$ from Lemma~\ref{lem:ConstantsFirstIntegral}. We choose $\alpha_0\de 0.645/\left(19\sqrt{2}\right)$ which, rounded to 5 decimal places, is equal to the minimum point of $B_3=B_3\left(A_2,A_3,\alpha_0\right)$ from~\eqref{eq:B3}. We seek to minimize $B_3$ as it appears to be the largest value among $B_i=B_i\left(A_2,A_3,\alpha_0\right)$ for $i\in\{1,3,4\}$. We will see that these values contribute to~\eqref{eq:U42} and~\eqref{eq:U44}, which are significant terms in~\eqref{eq:FirstIntegral}.

As
\[
f''\left(x_0\right) = \frac{4\pi k^2}{T}U(T,k)
\]
and
\begin{flalign*}
\log{\left(\frac{1 + x_0}{x_0}\right)} &= \log{\left(\frac{\sqrt{\frac{T}{2\pi k} + \frac{1}{4}} + \frac{1}{2}}{\sqrt{\frac{T}{2\pi k} + \frac{1}{4}} - \frac{1}{2}} \right)} = \log{\left(\frac{\sqrt{1 + \frac{\pi k}{2T}} + \sqrt{\frac{\pi k}{2T}}}{\sqrt{1 + \frac{\pi k}{2T}} - \sqrt{\frac{\pi k}{2T}}}\right)} \\
& =
2\log{\left(\sqrt{1 + \frac{\pi k}{2T}} + \sqrt{\frac{\pi k}{2T}}\right)} = 2 \arsinh\sqrt{\frac{\pi k}{2 T}},
\end{flalign*}
we can easily verify that the main parts in~\eqref{eq:FirstIntegral} and~\eqref{eq:EFirstIntegral} coincide. In what follows are estimations of the terms $\mathcal{R}_i$.

Beginning with $\mathcal{R}_1$, our condition on $a$ implies
\begin{flalign}
\label{conda}
|f'(a)+k| = -f'(a)-k &= \frac{T}{2\pi a (a+1)} - k \nonumber \\
&\geq \frac{T}{2\pi a (a+1)} - \frac{T}{8 \pi a} = \frac{T(3-a)}{8\pi a(a+1)}.
\end{flalign}
Therefore,
\begin{equation*}
\frac{\Phi(a)}{\left|f'(a)+k\right|+\sqrt{f''(a)}} \leq \frac{a^{-\alpha}(1+a)^{\gamma-\beta}}{\left|f'(a)+k\right|} \leq \frac{8\pi(a+1)2^{\left|\gamma-\beta\right|}}{3-a} a^{1-\alpha}T^{-1}.
\end{equation*}
Similarly, for $b \geq T$ we have
\begin{equation}
\label{condb}
f'(b) + k = k - \frac{T}{2 \pi b(b+1)} \geq k - \frac{1}{2 \pi (T+1)} \geq k - \frac{1}{4\pi} \geq \left( 1 - \frac{1}{4\pi} \right) k
\end{equation}
and therefore
\begin{equation*}
\frac{\Phi(b)}{\left|f'(b)+k\right|+\sqrt{f''(b)}} \leq \frac{b^{-\alpha}(1+b)^{\gamma-\beta}}{f'(b)+k} \leq \frac{2^{\left|\gamma-\beta\right|}}{1 - \frac{1}{4\pi}} b^{\gamma-\alpha-\beta} k^{-1}.
\end{equation*}
Hence,
\begin{equation*}
%\label{eq:R1FirstIntegral}
|\mathcal{R}_1| \leq 4\pi A_1B_1 \left(\frac{2^{\left|\gamma-\beta\right|+1}(1+a)}{3-a}a^{1-\alpha}T^{-1}+\frac{2^{\left|\gamma-\beta\right|}}{4\pi-1}b^{\gamma-\alpha-\beta}k^{-1}\right).
\end{equation*}
In the final estimate for $\mathcal{U}$ this bound contributes~\eqref{eq:U2} and the first term in~\eqref{eq:U1}.

Proceeding to $\mathcal{R}_2$, we have
\begin{flalign*}
\left|\mathcal{R}_2\right| &\leq A_1\int_{a}^{b}\Phi(x)e^{B_2\left(-|k|\mu(x)-F(x)\right)}\left(1+\alpha_0\sqrt{2}\left|\mu'(x)\right|\right)\dif{x} \nonumber \\
&= A_1\left(1+\frac{\alpha_0 }{\sqrt{2}}\right)\left(\int_{a}^{1}+\int_{1}^{b}\right) x^{-\alpha} (1+x)^{\gamma - \beta}e^{-B_2\left( \frac{|k|x}{2}+\frac{T}{1+x}\right) }\dif{x},
\end{flalign*}
which gives
\begin{equation}
\label{eq:R2FirstIntegral}
\left|\mathcal{R}_2\right| \leq A_1\left(1+\frac{\alpha_0 }{\sqrt{2}}\right)2^{\left|\gamma-\beta\right|}\left(\int_{a}^{1}x^{-\alpha}e^{-\frac{B_2(|k|x+T)}{2}}\dif{x}+\int_{1}^{b}x^{\gamma-\alpha-\beta}e^{-\frac{B_2\left(|k|x+T/x\right)}{2}}\dif{x}\right).
\end{equation}
Note that $k\geq1$. We are going to estimate the first integral. Having
\[
\int_{a}^{1} x^{-\alpha}e^{-\frac{B_2(kx+T)}{2}}\dif{x} \leq e^{-\frac{B_2}{2} T}\int_{a}^{1} {x^{-\alpha} \dif{x}} \leq \frac{e^{-\frac{B_2}{2} T} a^{1 - \alpha}}{\left|\alpha-1\right|} + \frac{e^{-\frac{B_2}{2} T}}{\left|\alpha-1\right|},
\]
this integral is therefore less than
\[
\frac{2}{B_2 e\left|\alpha-1\right|} T^{-1} a^{1 - \alpha} +  \frac{1}{\left|\alpha-1\right|}\left( \frac{|\gamma - \alpha - \beta| + 3}{B_2 e} \right)^{\frac{1}{2}|\gamma - \alpha - \beta| + \frac{3}{2}} T^{-\frac{1}{2}|\gamma - \alpha - \beta| - \frac{3}{2}}.
\]
Here we used the inequality
\begin{equation}
\label{eq:exponential}
e^{-A T} \leq \left( \frac{\lambda}{AeT} \right)^{\lambda},
\end{equation}
which is valid for positive $A$, $T$ and $\lambda$. If $1\leq k\leq A''T$, then
\begin{multline}
\label{eq:R2PartOneFirstIntegral}
\int_{a}^{1} {x^{-\alpha} e^{ -\frac{B_2(kx+T)}{2}} \dif{x}} \leq \frac{2}{B_2 e\left|\alpha-1\right|} T^{-1} a^{1 - \alpha} + \frac{\left(A''\right)^{\frac{5}{4}}\max\left\{1,\left(A''\right)^{\frac{1}{2}(\gamma-\alpha-\beta)}\right\}}{\left|\alpha-1\right|} \\
\times\left(\frac{|\gamma - \alpha - \beta| + 3}{B_2 e} \right)^{\frac{1}{2}|\gamma - \alpha - \beta| + \frac{3}{2}}T^{\frac{1}{2}(\gamma - \alpha - \beta) - \frac{1}{4}} k^{-\frac{1}{2}(\gamma - \alpha - \beta) - \frac{5}{4}}.
\end{multline}
If $k\geq A'T$, then we obtain
\begin{flalign}
\label{eq:R2PartTwoFirstIntegral}
\int_{a}^{1} {x^{-\alpha} e^{ -\frac{B_2(kx+T)}{2}} \dif{x}} &\leq e^{-\frac{B_2 T}{2}} \left(\int_{a}^{A'T/k} {x^{-\alpha} dx} + \left(\frac{k}{A'T} \right)^{\alpha} \int_{A'T/k}^{\infty} {e^{-\frac{B_2 k x}{2}} \dif{x}}\right) \nonumber \\
&\leq e^{-\frac{B_2 T}{2}} \left(\frac{1}{\left|\alpha-1\right|} a^{1 - \alpha} + \frac{\left(A'\right)^{1-\alpha}T^{1 - \alpha} k^{\alpha - 1}}{\left|\alpha-1\right|} + \frac{2T^{-\alpha} k^{\alpha - 1}}{B_2\left(A'\right)^\alpha} \right) \nonumber \\
&\leq \frac{2}{e B_2 \left|\alpha-1\right|} T^{-1} a^{1 - \alpha} \nonumber \\
&+ \frac{1}{\left(A'\right)^\alpha}\left(\frac{A'}{\left|\alpha-1\right|}\left(\frac{3}{e B_2} \right)^{3/2} + \frac{4}{eB_2^2\sqrt{T_0}}\right) T^{- \frac{1}{2}-\alpha} k^{\alpha - 1},
\end{flalign}
where we used~\eqref{eq:exponential} in order to obtain the last inequality. Turning to the estimation of the second integral in~\eqref{eq:R2FirstIntegral}, we have
\begin{flalign}
\label{eq:R2PartThreeFirstIntegral}
\int_{1}^{b}x^{\gamma-\alpha-\beta}e^{-\frac{B_2\left(kx+T/x\right)}{2}}\dif{x} &\leq \int_{1}^{T} {x^{\gamma - \alpha - \beta} e^{-B_2 \sqrt{T k}} \dif{x}} + \int_{T}^{b} {x^{\gamma - \alpha - \beta} e^{-\frac{B_2kx}{2}} \dif{x}} \nonumber \\
&\leq \delta(T)e^{-B_2 \sqrt{Tk}} + \frac{2}{B_{2}k}\max\left\{b^{\gamma - \alpha - \beta}, T^{\gamma - \alpha - \beta}\right\}e^{-\frac{B_2Tk}{2}},
\end{flalign}
where in the first inequality we used the fact that the maximum of $-\frac{B_2}{2}\left(kx+T/x\right)$, $x\in[1,\infty)$, does not exceed $-B_2\sqrt{T k}$. Taking~\eqref{eq:R2PartOneFirstIntegral} and~\eqref{eq:R2PartThreeFirstIntegral} in~\eqref{eq:R2FirstIntegral} contributes~\eqref{eq:U3}, \eqref{eq:U41}, and the second term in~\eqref{eq:U1}, while taking~\eqref{eq:R2PartTwoFirstIntegral} in place of~\eqref{eq:R2PartOneFirstIntegral} contributes~\eqref{eq:U43} in place of~\eqref{eq:U41}.

Finally, we need to estimate also $\mathcal{R}_3$. By Proposition~\ref{prop:saddle} we have
\begin{equation}
\label{eq:R3FirstIntegral}
\left|\mathcal{R}_3\right| \leq A_1\left(A_3^4B_3+\sqrt{A_3}B_4\right)\frac{x_0}{2}\Phi\left(x_0\right)F^{-\frac{3}{2}}\left(x_0\right).
\end{equation}
If $1\leq k\leq A''T$, then
\[
\sqrt{A''}\eta\left(A''\right)\sqrt{\frac{T}{k}} \leq x_0 \leq \sqrt{\frac{T}{2\pi k}},
\]
which implies
\begin{flalign*}
\Phi\left(x_0\right) &\leq \left(1+\frac{1}{\eta\left(A''\right)}\right)^{\left|\gamma-\beta\right|}x_0^{\gamma-\alpha-\beta} \\
&\leq \left(1+\frac{1}{\eta\left(A''\right)}\right)^{\left|\gamma-\beta\right|}\times \\
&\times\max\left\{(2\pi)^{-\frac{1}{2}(\gamma-\alpha-\beta)},\left(\sqrt{A''}\eta\left(A''\right)\right)^{\gamma-\alpha-\beta}\right\}\left(\frac{T}{k}\right)^{\frac{1}{2}(\gamma-\alpha-\beta)}
\end{flalign*}
and
\[
F\left(x_0\right) \geq \frac{1}{\sqrt{A''}+\frac{1}{\sqrt{2\pi}}}\sqrt{kT}.
\]
Taking these bounds in~\eqref{eq:R3FirstIntegral} contributes~\eqref{eq:U42}. Now let $k\geq A'T$. We then have
\[
A'\eta\left(A'\right)\frac{T}{k} \leq x_0 \leq \frac{T}{2\pi k},
\]
which implies
\[
\Phi\left(x_0\right) \leq \left(1+\frac{1}{2\pi A'}\right)^{\left|\gamma-\beta\right|}x_0^{-\alpha} \leq \left(A'\eta\left(A'\right)\right)^{-\alpha}\left(1+\frac{1}{2\pi A'}\right)^{\left|\gamma-\beta\right|}\left(\frac{T}{k}\right)^{-\alpha}
\]
and
\[
F\left(x_0\right) \geq T\left(1+\frac{1}{2\pi A'}\right)^{-1}.
\]
Taking these bounds in~\eqref{eq:R3FirstIntegral} contributes~\eqref{eq:U44}. This concludes the proof of~\eqref{eq:FirstIntegral}.

We need to prove also~\eqref{eq:FirstIntegral2}. We have
\[
\widehat{\mathcal{U}}=\mathcal{E}\left(a,b,-k;\varphi,f\right)=\mathcal{R}_1+\mathcal{R}_2
\]
by Proposition~\ref{prop:saddle}, where $k$ is replaced by $-k$ in the bounds for $\mathcal{R}_1$ and $\mathcal{R}_2$. Trivially, the estimate~\eqref{eq:R2FirstIntegral} does not change as it depends on $|k|$. The estimation of $\left|\mathcal{R}_1 \right|$ is slightly different. In this case $|f'(a) - k| = k - f'(a)$ and $|f'(b) - k| = k - f'(b)$. But it is not difficult to see that lower bounds~\eqref{conda} and~\eqref{condb} are also valid in this case. Because the term $\mathcal{R}_3$ contributes only~\eqref{eq:U42} and~\eqref{eq:U44}, the proof of~\eqref{eq:FirstIntegral2}, and also Proposition~\ref{prop:FirstIntegral}, is thus complete.
\end{proof}

\begin{proposition}
\label{prop:SecondIntegral}
Let $A,A',\alpha$ be positive real numbers and $A\sqrt{T}\leq a\leq A'\sqrt{T}$. Additionally, let $n\geq 1$ and
\begin{equation}
\label{eq:SecondIntegralCond}
n < \frac{T}{2 \pi}, \quad \frac{1}{n}\left( \frac{T}{2 \pi}-n\right)^2 \geq a^2.
\end{equation}
Then
\begin{flalign}
\label{eq:SecondIntegral}
&\int_{a}^{\infty} \frac{\exp{\left(\ie\left(4 \pi x \sqrt{n} - 2 T \arsinh\left( x\sqrt{\frac{\pi}{2T}} \right) - \sqrt{2\pi x^2 T + \pi^{2} x^4} + \pi x^2 \right)\right)}}{x^{\alpha} \arsinh \left( x\sqrt{\frac{\pi}{2T}} \right) \left(\sqrt{\frac{T}{2 \pi x^2} + \frac{1}{4}} + \frac{1}{2} \right) \left(\frac{T}{2 \pi x^2} + \frac{1}{4} \right)^{\frac{1}{4}}} \dif{x} \nonumber \\
&= \frac{4\pi}{T} n^{\frac{\alpha - 1}{2}} \left(\log \frac{T}{2 \pi n} \right)^{-1} \left(\frac{T}{2 \pi} - n \right)^{\frac{3}{2} - \alpha} \exp{\left(\ie\left(T-T \log\frac{T}{2\pi n} - 2 \pi n + \frac{\pi}{4}\right)\right)} \nonumber \\
&+ \mathcal{V}\left(\alpha,a,n,T,A,A'\right),
\end{flalign}
where
\begin{multline}
\label{eq:V}
\left|\mathcal{V}\right| \leq \mathcal{V}_1 \cdot T^{-\frac{\alpha}{2}} \min{\left\{1, \left| 2\sqrt{n} + a - \sqrt{a^2 + \frac{2 T}{\pi} }\right|^{-1}\right\}} \\
+ \mathcal{V}_2 \cdot T^{-\frac{3}{2}}n^{\frac{\alpha - 1}{2}} \left(\frac{T}{2 \pi} - n \right)^{1 - \alpha} + \mathcal{V}_3\cdot T^{-\frac{\alpha}{2}}n^{-\frac{1}{2}}e^{-\mathcal{B}_2(A,\alpha_0)\left(T+A\sqrt{T}\right)}
\end{multline}
Here,
\begin{equation}
\label{eq:alpha0}
0 < \alpha_0 < \min\left\{\frac{1}{\left(1+\frac{2\mathcal{A}_2\mathcal{A}_3(A)}{3}\right)\sqrt{2}},\frac{\mathcal{A}_3(A)\left(1+2\mathcal{A}_2\right)}{2},\left(\frac{48\mathcal{A}_3(A)^3}{\pi^3}\right)^{\frac{1}{6}}\right\},
\end{equation}
\begin{gather}
\mathcal{B}_i\left(A,\alpha_0\right)\de B_i\left(\mathcal{A}_2,\mathcal{A}_3(A),\alpha_0\right), \quad i\in\{1,2,3,4\}, \label{eq:CalB} \\
\mathcal{V}_1\left(\alpha,\alpha_0,A,A'\right) = \mathcal{A}_1\left(\alpha,A\right) \mathcal{B}_1(A,\alpha_0)A^{-\alpha}\left(1-\left(1+ \frac{1}{2\pi\left(A'\right)^2} \right)^{-\frac{1}{2}}\right)^{-\frac{1}{2}}, \label{eq:V1} \\
\mathcal{V}_2\left(\alpha,\alpha_0,A\right) = \frac{1}{2}\mathcal{A}_1\left(\alpha,A\right)\left(\mathcal{A}_3\left(A\right)^4\mathcal{B}_3(A,\alpha_0)+\sqrt{\mathcal{A}_3\left(A\right)}\mathcal{B}_4(A,\alpha_0)\right), \label{eq:V2} \\
\mathcal{V}_3\left(\alpha,\alpha_0,A\right) = \frac{\mathcal{A}_1\left(\alpha,A\right)}{\mathcal{B}_2(A,\alpha_0)}A^{-\alpha}\left(1+\frac{\alpha_0}{\sqrt{2}}\right), \label{eq:V3}
\end{gather}
with $\mathcal{A}_1,\mathcal{A}_2,\mathcal{A}_2$, and $B_i$ defined by~\eqref{eq:calA} and~\eqref{eq:B1}--\eqref{eq:B4}, respectively.

If one of the conditions in~\eqref{eq:SecondIntegralCond} is not satisfied, or if $\sqrt{n}$ is replaced by $-\sqrt{n}$ in the integral, then the main term in~\eqref{eq:SecondIntegral} and the second term on the right-hand side of~\eqref{eq:V} are to be omitted.
\end{proposition}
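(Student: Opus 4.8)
The plan is to apply the saddle point lemma (Proposition~\ref{prop:saddle}) with $k=2\sqrt n$ and
\[
f(z)=\frac{1}{2\pi}\Bigl(\pi z^2-\sqrt{2\pi z^2T+\pi^2z^4}-2T\arsinh\bigl(z\sqrt{\tfrac{\pi}{2T}}\bigr)\Bigr),\qquad
\varphi(z)=\frac{z^{-\alpha}}{\arsinh\bigl(z\sqrt{\frac{\pi}{2T}}\bigr)\bigl(\sqrt{\frac{T}{2\pi z^2}+\frac14}+\frac12\bigr)\bigl(\frac{T}{2\pi z^2}+\frac14\bigr)^{1/4}}
\]
(principal branches), so that the integrand of~\eqref{eq:SecondIntegral} is exactly $\varphi(x)e^{2\pi\ie(f(x)+kx)}$. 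A direct differentiation gives $f'(z)=z-\sqrt{2T/\pi+z^2}$ and $f''(z)=1-z(2T/\pi+z^2)^{-1/2}$, so $f''(x)>0$ for real $x>0$ and $f'$ is strictly increasing. The functions $\arsinh(z\sqrt{\pi/2T})$, $\sqrt{2T+\pi z^2}$ and $(\frac{T}{2\pi z^2}+\frac14)^{1/4}$ have their only branch points at $z=\pm\ie\sqrt{2T/\pi}$, and $\varphi$ has no singularity in the right half-plane; choosing $\mu(x)=x/2$ (as in Proposition~\ref{prop:FirstIntegral}), $F(x)=T$ and $\Phi(x)=x^{-\alpha}$, the disks $\cD{x,\mu(x)}$ for $x\ge a\ge A\sqrt T$ stay in the right half-plane and at distance $\sqrt{x^2+2T/\pi}>x/2$ from the branch points, so $f$ and $\varphi$ are holomorphic on $\bigcup_{x\ge a}\cD{x,\mu(x)}$. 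We apply Proposition~\ref{prop:saddle} on $[a,b]$ and let $b\to\infty$, the convergence of the $b$-dependent quantities being verified in the last paragraph.

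Next, by a lemma analogous to Lemma~\ref{lem:ConstantsFirstIntegral}, one checks the three bounds in hypothesis~(3) of Proposition~\ref{prop:saddle} and reads off the constants $\mathcal{A}_1(\alpha,A)$, $\mathcal{A}_2$, $\mathcal{A}_3(A)$ from~\eqref{eq:calA}. For $z\in\cD{x,x/2}$ one has $|z|\in[x/2,3x/2]$ and $\Re\{z\}\ge x/2$, whence $|z^{-\alpha}|\le 2^{\alpha}x^{-\alpha}$; since $\Re\{z^2\}\ge 0$ on this disk, $\Re\{\tfrac{T}{2\pi z^2}+\tfrac14\}\ge\tfrac14$ and $\Re\{\sqrt{\tfrac{T}{2\pi z^2}+\tfrac14}\}\ge 0$, so the last two factors of $\varphi$ are bounded, while a lower bound for $|\arsinh(z\sqrt{\pi/2T})|$ follows from $|z|\sqrt{\pi/2T}\ge(A/2)\sqrt{\pi/2}$ and injectivity of $\arsinh$; this yields $|\varphi(z)|\le\mathcal{A}_1(\alpha,A)\Phi(x)$. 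Writing $f'(z)=-(2T/\pi)(z+\sqrt{2T/\pi+z^2})^{-1}$ with $\Re\{\sqrt{2T/\pi+z^2}\}\ge\sqrt{T/\pi}$ gives $|f'(z)|\le(2/\pi)F(x)/\mu(x)$, so $\mathcal{A}_2=2/\pi$; and $1/|f''(z)|=|\sqrt{2T/\pi+z^2}|\,|z+\sqrt{2T/\pi+z^2}|/(2T/\pi)$ together with $|2T/\pi+z^2|\le(2/(\pi A^2)+\tfrac94)x^2$ for $x\ge A\sqrt T$ gives $1/|f''(z)|\le\mathcal{A}_3(A)\mu^2(x)/F(x)$. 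Inequality~\eqref{eq:alphacond} for these constants is precisely~\eqref{eq:alpha0}, and $\mathcal{B}_i(A,\alpha_0)=B_i(\mathcal{A}_2,\mathcal{A}_3(A),\alpha_0)$.

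The saddle point solves $f'(x_0)=-k$, i.e.\ $\sqrt{2T/\pi+x_0^2}=x_0+2\sqrt n$; squaring yields $x_0=\frac{1}{\sqrt n}\bigl(\frac{T}{2\pi}-n\bigr)$, which lies in $[a,\infty)$ exactly when both conditions in~\eqref{eq:SecondIntegralCond} hold. A short computation gives $\arsinh(x_0\sqrt{\pi/2T})=\frac12\log\frac{T}{2\pi n}$, $\sqrt{\frac{T}{2\pi x_0^2}+\frac14}+\frac12=\frac{T/(2\pi)}{T/(2\pi)-n}$, $f''(x_0)=\frac{4\pi n}{T+2\pi n}$, and $2\pi(f(x_0)+kx_0)=T-T\log\frac{T}{2\pi n}-2\pi n$; substituting into $\varphi(x_0)f''(x_0)^{-1/2}e^{2\pi\ie(f(x_0)+kx_0+1/8)}$ reproduces the main term of~\eqref{eq:SecondIntegral} exactly.

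It remains to convert $\mathcal{R}_1,\mathcal{R}_2,\mathcal{R}_3$ into $\mathcal{V}_1,\mathcal{V}_2,\mathcal{V}_3$. Since $\Phi(b)=b^{-\alpha}\to0$ while $|f'(b)+k|\to 2\sqrt n>0$ as $b\to\infty$, the $b$-endpoint term in~\eqref{eq:R1} vanishes, and using $|f'(a)+k|=|2\sqrt n+a-\sqrt{a^2+2T/\pi}|$, $a^{-\alpha}\le A^{-\alpha}T^{-\alpha/2}$, the lower bound $f''(a)\ge 1-(1+\frac{1}{2\pi(A')^2})^{-1/2}$ valid for $a\le A'\sqrt T$, and $(1-(1+\cdot)^{-1/2})^{1/2}\le1$, one gets the $\mathcal{V}_1$ term. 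For $\mathcal{R}_3$, $\Phi(x_0)\mu(x_0)F(x_0)^{-3/2}=\tfrac12 x_0^{1-\alpha}T^{-3/2}$ with $x_0^{1-\alpha}=n^{(\alpha-1)/2}\bigl(\frac{T}{2\pi}-n\bigr)^{1-\alpha}$, so~\eqref{eq:R3} gives the $\mathcal{V}_2$ term. For $\mathcal{R}_2$, since $|k|\mu(x)=\sqrt n x$, $F(x)=T$, $|\mu'(x)|=\frac12$,~\eqref{eq:R2} is at most $\mathcal{A}_1(1+\frac{\alpha_0}{\sqrt2})e^{-\mathcal{B}_2T}\int_a^\infty x^{-\alpha}e^{-\mathcal{B}_2\sqrt n x}\dif x\le\frac{\mathcal{A}_1(1+\alpha_0/\sqrt2)}{\mathcal{B}_2\sqrt n}a^{-\alpha}e^{-\mathcal{B}_2(T+\sqrt n a)}$, and $\sqrt n a\ge A\sqrt T$ (from $n\ge1$, $a\ge A\sqrt T$) together with $a^{-\alpha}\le A^{-\alpha}T^{-\alpha/2}$ gives the $\mathcal{V}_3$ term; this also shows $\int_a^b$ converges as $b\to\infty$, hence so does the original integral. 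If a condition in~\eqref{eq:SecondIntegralCond} fails, or if $\sqrt n$ is replaced by $-\sqrt n$ (so $k=-2\sqrt n$), then $f'+k$ has no zero in $[a,\infty)$, so case~(2) of Proposition~\ref{prop:saddle} applies; we drop the main term and the $\mathcal{V}_2$ term, and the estimates for $\mathcal{R}_1$ and $\mathcal{R}_2$ are unchanged, Proposition~\ref{prop:saddle} covering both the subcase $f'(a)+k<0$ and the subcase $f'(a)+k>0$ with the alternate contour. The main difficulty is the middle step: proving holomorphy on the union of disks and, above all, extracting clean explicit values for $\mathcal{A}_1$ and $\mathcal{A}_3$ from the three bounds, since the amplitude mixes $\arsinh$ with two nested radicals.
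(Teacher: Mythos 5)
Your proposal follows the paper's proof essentially verbatim: same $\varphi$, $f$ as in~\eqref{eq:AuxFun3}--\eqref{eq:AuxFun4}, same choices $\Phi(x)=x^{-\alpha}$, $F(x)\equiv T$, $\mu(x)=x/2$, the same saddle $x_0=\frac{1}{\sqrt n}\bigl(\frac{T}{2\pi}-n\bigr)$ with the same closed-form evaluations at $x_0$, and the same conversion of $\mathcal{R}_1,\mathcal{R}_2,\mathcal{R}_3$ into $\mathcal{V}_1,\mathcal{V}_3,\mathcal{V}_2$ (including the limit $b\to\infty$ and the no-saddle/$-\sqrt n$ cases). Those parts are correct.

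There is, however, one defect in the verification of hypothesis~(3) of Proposition~\ref{prop:saddle}: you derive $A_2=2/\pi$ (bounding $\bigl|z+\sqrt{z^2+2T/\pi}\bigr|$ only by $\Re\{z\}\geq x/2$), whereas the stated constants require $\mathcal{A}_2=1/\pi$ from~\eqref{eq:calA}. This is not cosmetic: $\mathcal{A}_2$ enters $\beta$ in~\eqref{eq:beta}, hence every $\mathcal{B}_i(A,\alpha_0)=B_i(\mathcal{A}_2,\mathcal{A}_3(A),\alpha_0)$, the admissible range~\eqref{eq:alpha0} for $\alpha_0$, and therefore the claimed values of $\mathcal{V}_1,\mathcal{V}_2,\mathcal{V}_3$; with $A_2=2/\pi$ you prove the proposition only with larger constants, not as stated. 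The repair is the paper's estimate: on $\cD{x,x/2}$ one has $\Re\{z^2\}\geq(x/2)^2$, so $\Re\bigl\{\sqrt{z^2+2T/\pi}\bigr\}\geq\sqrt{\Re\{z^2\}}\geq x/2$, whence $\bigl|z+\sqrt{z^2+2T/\pi}\bigr|\geq x$ and $|f'(z)|\leq 2T/(\pi x)=\mathcal{A}_2F(x)/\mu(x)$ with $\mathcal{A}_2=1/\pi$. Relatedly, your lower bound for $\bigl|\arsinh\bigl(z\sqrt{\pi/(2T)}\bigr)\bigr|$ cannot be justified by ``injectivity'' of $\arsinh$ for complex arguments; the correct route (as in the paper) again uses $\Re\{z^2\}\geq(x/2)^2\geq(A/2)^2T$ together with $\Re\{\sqrt{w}\}\geq\sqrt{\Re\{w\}}$, applied inside $\arsinh(w)=\log\bigl(w+\sqrt{1+w^2}\bigr)$, which yields exactly $\mathcal{A}_1(\alpha,A)$ of~\eqref{eq:calA}. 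With these two points fixed, your argument coincides with the paper's proof.
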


In the proof we are using Proposition~\ref{prop:saddle} with $k=2\sqrt{n}$ and the following functions:
\begin{gather}
\varphi(z) = z^{-\alpha} \left(\arsinh{\left(z \sqrt{\frac{\pi}{2 T}}\right)}\right)^{-1} \left( \sqrt{\frac{T}{2 \pi z^2} + \frac{1}{4}} + \frac{1}{2} \right)^{-1} \left(\frac{T}{2 \pi z^2} + \frac{1}{4} \right)^{-\frac{1}{4}}, \label{eq:AuxFun3} \\
f(z) = \frac{z^2}{2} - \sqrt{\frac{T z^2}{2 \pi} + \frac{z^4}{4}} - \frac{T}{\pi} \arsinh{\left( z \sqrt{\frac{\pi}{2 T}}\right)}, \label{eq:AuxFun4} \\
\Phi(x)=x^{-\alpha}, \quad F(x)\equiv T, \quad \mu(x)=\frac{x}{2}. \label{eq:AuxFun5}
\end{gather}
Then $f(z)$ and $\varphi(z)$ are holomorphic functions on the set $\bigcup_{x\in[a,b]}\cD{x,\mu(x)}$, where $a$ is from Proposition~\ref{prop:SecondIntegral} and $a\leq b$. Also, for $x\in[a,b]$ the function $f(x)$ is real and
\[
f''(x) = 1 - \left(1+\frac{2T}{\pi x^2}\right)^{-\frac{1}{2}} > 0.
\]

As before, let us provide the constants $A_1$, $A_2$, and $A_3$ from Proposition~\ref{prop:saddle} for this particular set of functions. Observe that these constants will depend on the positive parameters $A$ and $\alpha$ from Proposition~\ref{prop:SecondIntegral}.

\begin{lemma}
%\label{lem:ConstantsSecondIntegral}
With $A_1=\mathcal{A}_1\left(\alpha,A\right)$, $A_2=\mathcal{A}_2$ and $A_3=\mathcal{A}_3\left(A\right)$, where
\begin{equation}
\label{eq:calA}
\mathcal{A}_1\left(\alpha,A\right) \de 2^{\alpha+\frac{1}{2}}\left(\arsinh{\left(\frac{A}{2}\sqrt{\frac{\pi}{2}}\right)}\right)^{-1}, \quad \mathcal{A}_2\de \frac{1}{\pi}, \quad \mathcal{A}_3\left(A\right)\de \frac{8}{A^2}+9\pi,
\end{equation}
the conditions of Proposition~\ref{prop:saddle} are true for the functions defined in~\eqref{eq:AuxFun3},~\eqref{eq:AuxFun4}, and~\eqref{eq:AuxFun5}.
\end{lemma}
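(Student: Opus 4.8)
The plan is to verify, one at a time, the three size estimates in the third hypothesis of Proposition~\ref{prop:saddle} for the functions in~\eqref{eq:AuxFun3}--\eqref{eq:AuxFun5}, with $A_1=\mathcal{A}_1(\alpha,A)$, $A_2=\mathcal{A}_2=1/\pi$, $A_3=\mathcal{A}_3(A)=8/A^{2}+9\pi$, and with $\Phi(x)=x^{-\alpha}$, $F(x)\equiv T$, $\mu(x)=x/2$; the reality of $f$ on $[a,b]$, the positivity of $f''$ there, and the holomorphy on $\bigcup_{x\in[a,b]}\cD{x,x/2}$ have already been recorded, while $\mu\in\mathcal{C}^{1}([a,b])$ with $\mu>0$ is clear. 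First I would collect the elementary geometry of the disc: for $x\in[a,b]$ and $z=p+\ie q\in\cD{x,x/2}$ one has $(p-x)^{2}+q^{2}\leq x^{2}/4$, hence $\Re\{z\}=p\geq x/2$, $|z|\leq 3x/2$, and
\[
\Re\{z^{2}\}=p^{2}-q^{2}\geq p^{2}-\Bigl(\tfrac{x^{2}}{4}-(p-x)^{2}\Bigr)=2p^{2}-2px+\tfrac{3x^{2}}{4}\geq\tfrac{x^{2}}{4},
\]
the last step by minimising the quadratic in $p$. This forces $z^{2}$, and therefore $z^{2}+2T/\pi$, $1/z^{2}$, $T/(2\pi z^{2})+1/4$ and $1+\pi z^{2}/(2T)$, into the open right half-plane, so the principal branches in~\eqref{eq:AuxFun3} and~\eqref{eq:AuxFun4} make sense; I will also use repeatedly that $\Re\{\sqrt{w}\}=\sqrt{(|w|+\Re\{w\})/2}\geq\sqrt{\Re\{w\}}$ whenever $\Re\{w\}\geq 0$.

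To bound $\varphi$ I would treat the four factors of~\eqref{eq:AuxFun3} separately: $|z|^{-\alpha}\leq(x/2)^{-\alpha}=2^{\alpha}x^{-\alpha}$; since $\Re\{z\sqrt{\pi/(2T)}\}\geq(x/2)\sqrt{\pi/(2T)}\geq(A/2)\sqrt{\pi/2}$ (using $x\geq a\geq A\sqrt{T}$) and, for any $w$ with $\Re\{w\}>0$, $|\arsinh{w}|\geq\arsinh(\Re\{w\})$ -- writing $\arsinh{w}=\xi+\ie\eta$ with $\xi>0$ and $|\eta|<\pi/2$ gives $\Re\{w\}=\sinh{\xi}\cos{\eta}\leq\sinh{\xi}\leq\sinh\sqrt{\xi^{2}+\eta^{2}}$ -- one obtains $|\arsinh(z\sqrt{\pi/(2T)})|^{-1}\leq(\arsinh((A/2)\sqrt{\pi/2}))^{-1}$; and with $\zeta\de T/(2\pi z^{2})+1/4$ the inequality $\Re\{\zeta\}\geq 1/4$ yields $|\zeta|^{-1/4}\leq\sqrt{2}$ and $|\sqrt{\zeta}+1/2|\geq\Re\{\sqrt{\zeta}\}+1/2\geq\sqrt{1/4}+1/2=1$. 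Multiplying the four estimates gives precisely $|\varphi(z)|\leq 2^{\alpha+1/2}(\arsinh((A/2)\sqrt{\pi/2}))^{-1}x^{-\alpha}=\mathcal{A}_1(\alpha,A)\Phi(x)$.

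For $f'$ and $f''$ the decisive step is to compute, by differentiating~\eqref{eq:AuxFun4} and rationalising, the closed forms
\[
f'(z)=z-\sqrt{z^{2}+\tfrac{2T}{\pi}}=\frac{-2T/\pi}{z+\sqrt{z^{2}+2T/\pi}},\qquad f''(z)=\frac{2T/\pi}{\sqrt{z^{2}+2T/\pi}\,\bigl(z+\sqrt{z^{2}+2T/\pi}\bigr)},
\]
which on $\R$ reduce to the already noted $f''(x)=1-(1+2T/(\pi x^{2}))^{-1/2}$. From the first it follows that $|f'(z)|=(2T/\pi)|z+\sqrt{z^{2}+2T/\pi}|^{-1}$, so the target bound $|f'(z)|\leq\mathcal{A}_2 F(x)/\mu(x)=2T/(\pi x)$ is equivalent to $|z+\sqrt{z^{2}+2T/\pi}|\geq x$, which is exactly where the half-plane facts pay off:
\[
|z+\sqrt{z^{2}+\tfrac{2T}{\pi}}|\geq\Re\{z\}+\Re\{\sqrt{z^{2}+\tfrac{2T}{\pi}}\}\geq\frac{x}{2}+\sqrt{\Re\{z^{2}\}+\frac{2T}{\pi}}\geq\frac{x}{2}+\sqrt{\frac{x^{2}}{4}}=x.
\]
For the third estimate, the same identity gives $1/|f''(z)|=(\pi/(2T))\sqrt{|z^{2}+2T/\pi|}\,|z+\sqrt{z^{2}+2T/\pi}|$; I would bound $|z^{2}+2T/\pi|\leq 9x^{2}/4+2T/\pi$ and $|z+\sqrt{z^{2}+2T/\pi}|\leq 3x/2+\sqrt{9x^{2}/4+2T/\pi}$, then use $T\leq x^{2}/A^{2}$ to homogenise in $x$; writing $S\de\sqrt{9/4+2/(\pi A^{2})}$, the required inequality $1/|f''(z)|\leq\mathcal{A}_3(A)\mu^{2}(x)/F(x)=(8/A^{2}+9\pi)x^{2}/(4T)$ collapses to $\tfrac{3\pi}{4}S\leq 1/A^{2}+9\pi/8$, and squaring turns this into $0\leq 1/A^{4}+9\pi/(8A^{2})$, which is trivially true.

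The one step that is not purely mechanical is producing the closed forms for $f'$ and $f''$; granting those, the whole lemma rests on the single pointwise inequality $\Re\{z^{2}\}\geq x^{2}/4$ on $\cD{x,x/2}$, which at once yields $|z+\sqrt{z^{2}+2T/\pi}|\geq x$ and hence both the $f'$- and the $f''$-bounds, together with the routine right-half-plane inequalities for $\arsinh$ and for square roots. In practice I would expect the $\varphi$-bound to be the most error-prone, simply because of its four separate factors, but no individual part of it is hard.
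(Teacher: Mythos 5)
Your proposal is correct, and for the first two conditions it is essentially the paper's argument: the same four-factor treatment of $\varphi$ built on $\Re\{z^2\}\geq(x/2)^2$, $\Re\{\sqrt{w}\}\geq\sqrt{\Re\{w\}}$ and $\Re\{1/z^2\}>0$, giving exactly $2^{\alpha+\frac12}\left(\arsinh\left(\frac{A}{2}\sqrt{\frac{\pi}{2}}\right)\right)^{-1}$, and the same bound $\left|f'(z)\right|=\frac{2T}{\pi\left|z+\sqrt{z^2+2T/\pi}\right|}\leq\frac{2T}{\pi x}$, where you in fact supply details the paper leaves implicit (the inequality $\left|z+\sqrt{z^2+2T/\pi}\right|\geq x$, and the lower bound $\left|\arsinh w\right|\geq\arsinh(\Re\{w\})$ via the strip picture, where the paper invokes $\Re\{\sqrt{w}\}\geq\sqrt{\Re\{w\}}$ instead). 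For the third condition your route genuinely differs: the paper writes $1/\left|f''(z)\right|=\left|1+\frac{\pi z^2}{2T}\right|\cdot\left|1+\left(1+\frac{2T}{\pi z^2}\right)^{-\frac12}\right|$, bounds the second factor by $2$ (again using $\Re\{1/z^2\}>0$) and the first by $\left(\frac{8}{A^2}+9\pi\right)\frac{x^2}{8T}$ via $1\leq\frac{x^2}{A^2T}$, which is a two-line argument; you instead use the rationalized form $1/\left|f''(z)\right|=\frac{\pi}{2T}\left|\sqrt{z^2+2T/\pi}\right|\cdot\left|z+\sqrt{z^2+2T/\pi}\right|$, homogenize with $T\leq x^2/A^2$, and reduce to the algebraic check $\frac{3\pi}{4}S\leq\frac{1}{A^2}+\frac{9\pi}{8}$ with $S=\sqrt{\frac94+\frac{2}{\pi A^2}}$, which is valid and lands on the same constant $\mathcal{A}_3(A)=\frac{8}{A^2}+9\pi$, at the cost of a slightly longer final verification. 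Both derivations of the closed forms $f'(z)=z-\sqrt{z^2+2T/\pi}$ and the associated $f''$ are legitimate on the stated domain (where $\Re\{z^2\}>0$ keeps all branches principal), so granting that routine computation, your proof is complete.
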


\begin{proof}
We are going to show that $\left|\varphi(z)\right|\leq A_1\Phi(x)$, $\left|f'(z)\right| \leq A_2 F(x)/\mu(x)$, and $1/|f''(z)| \leq A_3\mu^2(x)/F(x)$ for $x\in[a,b]$ and $z\in\cD{x,x/2}$.

Because $\Re\left\{z^2\right\}\geq (x/2)^2\geq (A/2)^{2}T$ and $\Re\left\{\sqrt{w}\right\}\geq\sqrt{\Re\left\{w\right\}}$ for $\Re\left\{w\right\}>0$, we have
\[
\left|\arsinh{\left(z \sqrt{\frac{\pi}{2 T}}\right)}\right|\geq \arsinh{\left(\frac{A}{2}\sqrt{\frac{\pi}{2}}\right)}.
\]
Because $\Re\left\{1/z^2\right\}>0$, we also have
\[
\left|\sqrt{\frac{T}{2 \pi z^2} + \frac{1}{4}} + \frac{1}{2}\right|\geq 1, \quad \left|\frac{T}{2 \pi z^2} + \frac{1}{4}\right|\geq \frac{1}{4}.
\]
Now, $|z|^{-\alpha}\leq 2^\alpha\Phi(x)$, which gives $\left|\varphi(z)\right|\leq \mathcal{A}_1\left(\alpha,A\right)\Phi(x)$, thus proving the first bound.

Because
\[
\left|f'(z)\right| = \frac{2T}{\pi\left|z+\sqrt{z^2+\frac{2 T}{\pi}} \right|} \leq \frac{2T}{\pi x} = \mathcal{A}_2\frac{F(x)}{\mu(x)},
\]
the second bound follows.

We have
\[
\frac{1}{\left|f''(z)\right|} = \left|1-\left(1+\frac{2T}{\pi z^2}\right)^{-\frac{1}{2}}\right|^{-1} = \left|1+\frac{\pi z^2}{2T}\right|\cdot\left|1+\left(1+\frac{2T}{\pi z^2}\right)^{-\frac{1}{2}}\right|.
\]
Because
\[
\left|1+\frac{\pi z^2}{2T}\right| \leq \mathcal{A}_3(A)\frac{x^2}{8T}, \quad \left|1+\left(1+\frac{2T}{\pi z^2}\right)^{-\frac{1}{2}}\right| \leq 2,
\]
we obtain
\[
\frac{1}{\left|f''(z)\right|} \leq \mathcal{A}_3(A)\frac{x^2}{4T} = \mathcal{A}_3(A)\frac{\mu^2(x)}{F(x)},
\]
thus proving also the third bound.
\end{proof}

\begin{proof}[Proof of Proposition~\ref{prop:SecondIntegral}]

Let $b\geq T$ and
\[
x_0 \de \frac{1}{\sqrt{n}}\left(\frac{T}{2\pi}-n\right).
\]
Then $f'\left(x_0\right)=-2\sqrt{n}$ and $x_0\in[a,b)$, the latter being true because of~\eqref{eq:SecondIntegralCond} and $n\geq 1$. According to Proposition~\ref{prop:saddle} we have
\begin{equation}
\label{eq:ESecondIntegral}
\mathcal{E}\left(a,b,2\sqrt{n};\varphi,f\right)= \frac{\varphi\left(x_0\right)}{\sqrt{f''\left(x_0\right)}}e^{2\pi\ie\left(f\left(x_0\right)+2\sqrt{n}x_0+\frac{1}{8}\right)} + \mathcal{R}_1 + \mathcal{R}_2 + \mathcal{R}_3,
\end{equation}
where $\mathcal{E}\left(a,b,2\sqrt{n};\varphi,f\right)$, after taking $b\to\infty$, is exactly the integral from Proposition~\ref{prop:SecondIntegral}. We need to estimate each remainder term in~\eqref{eq:ESecondIntegral}. We are using estimates~\eqref{eq:R1},~\eqref{eq:R2} and~\eqref{eq:R3} with $B_i\left(\mathcal{A}_2,\mathcal{A}_3(A),\alpha_0\right)$ for $i\in\{1,2,3,4\}$ and constants $A_j=\mathcal{A}_j$ for $j\in\{1,2,3\}$, where $\mathcal{A}_j$ are defined by~\eqref{eq:calA} and $\alpha_0$ satisfies the condition~\eqref{eq:alphacond}, i.e.,~\eqref{eq:alpha0}. We are also using the functions $\mathcal{B}_i\left(A,\alpha_0\right)$ which are defined by~\eqref{eq:CalB}.

Because
\[
\varphi(x_0) = \frac{4\pi\sqrt{2} n^{\frac{\alpha}{2}} }{T \log \frac{T}{2 \pi n}}\left(\frac{T}{2\pi} - n \right)^{\frac{3}{2}-\alpha} \left( \frac{T}{2\pi} + n \right)^{-\frac{1}{2}}, \quad
f''(x_0) = 2n\left(\frac{T}{2\pi}+ n\right)^{-1},
\]
we can easily verify that the main parts in~\eqref{eq:SecondIntegral} and~\eqref{eq:ESecondIntegral} coincide. In what follows are estimations of the terms $\mathcal{R}_i$.

Beginning with $\mathcal{R}_1$, we have
\[
\frac{\Phi(a)}{\left|f'(a)+2\sqrt{n}\right|+\sqrt{f''(a)}} \leq \min \left\{\frac{\left(A\sqrt{T}\right)^{-\alpha}}{\left|a - \sqrt{a^2 + \frac{2 T}{\pi}} + 2\sqrt{n} \right|}, \frac{\left(A\sqrt{T}\right)^{-\alpha}}{\sqrt{1 - \frac{a}{\sqrt{a^2 + \frac{T}{2 \pi}}}}} \right\}
\]
and
\[
\frac{\Phi(b)}{\left|f'(b)+2\sqrt{n} \right|+\sqrt{f''(b)}} \leq
\frac{b^{-\alpha}}{\left| b - \sqrt{b^2 + \frac{2 T}{\pi}} + 2\sqrt{n}\right| + \sqrt{1 - \frac{b}{\sqrt{b^2 + \frac{T}{2 \pi}}}}}.
\]
Observe that the last inequality implies
\[
\lim_{b\to\infty} \frac{\Phi(b)}{\left|f'(b)+k \right|+\sqrt{f''(b)}} = 0.
\]
Hence, in the limit $b\to\infty$ we obtain
\[
\left|\mathcal{R}_1\right| \leq \mathcal{V}_1\left(\alpha,\alpha_0,A,A'\right) \cdot T^{-\frac{\alpha}{2}} \min{\left\{1, \left| 2\sqrt{n} + a - \sqrt{a^2 + \frac{2 T}{\pi} }\right|^{-1}\right\}},
\]
where $\mathcal{V}_1\left(\alpha,\alpha_0,A,A'\right)$ is defined by~\eqref{eq:V1}. It is clear that this estimate contributes the first term in~\eqref{eq:V}.

Proceeding to $\mathcal{R}_2$, we have
\begin{flalign*}
\left|\mathcal{R}_2\right| &\leq \mathcal{A}_1\left(\alpha,A\right)\int_{a}^{b}\Phi(x)e^{\mathcal{B}_2\left(A,\alpha_0\right)\left(-|2\sqrt{n}|\mu(x)-F(x)\right)}\left(1+\alpha_0\sqrt{2}\left|\mu'(x)\right|\right)\dif{x} \nonumber \\
&\leq \mathcal{A}_1\left(\alpha,A\right)\left(1+\frac{\alpha_0}{\sqrt{2}}\right)a^{-\alpha}e^{-\mathcal{B}_2\left(A,\alpha_0\right)T}\int_{a}^{\infty}e^{-\mathcal{B}_2\left(A,\alpha_0\right)x\sqrt{n}}\dif{x} \\
&\leq \mathcal{V}_3\left(\alpha,\alpha_0,A\right)\cdot T^{-\frac{\alpha}{2}}n^{-\frac{1}{2}}e^{-\mathcal{B}_2(A,\alpha_0)\left(T+A\sqrt{T}\right)},
\end{flalign*}
where $\mathcal{V}_3\left(\alpha,\alpha_0,A\right)$ is defined by~\eqref{eq:V3}. This estimate thus contributes the last term in~\eqref{eq:V}.

Turning to $\mathcal{R}_3$, it is straightforward to see that
\[
\left|\mathcal{R}_3\right| \leq \mathcal{V}_2\left(\alpha,\alpha_0,A\right) \cdot T^{-\frac{3}{2}}n^{\frac{\alpha - 1}{2}} \left(\frac{T}{2 \pi} - n \right)^{1 - \alpha},
\]
where $\mathcal{V}_2\left(\alpha,\alpha_0,A\right)$ is defined by~\eqref{eq:V2}, thus contributing the second and final term in~\eqref{eq:V}. With this, the inequality~\eqref{eq:V} is proved and consequently also the first part of Proposition~\ref{prop:SecondIntegral}.

To prove the second part, observe that if one of the conditions~\eqref{eq:SecondIntegralCond} is not true, then $x_0\notin[a,b]$. Also, the equation $f'(x)=2\sqrt{n}$ does not have a solution in the interval $[a,b]$. According to Proposition~\ref{prop:saddle}, in both cases we are left only with terms $\mathcal{R}_1$ and $\mathcal{R}_2$ on the right-hand side of~\eqref{eq:ESecondIntegral}.
\end{proof}

\section{Estimation of terms $\mathcal{E}_i$ in equation~\eqref{eq:MainForE}}
\label{sec:proof}

We are going to estimate each $\mathcal{E}_i$ in order to provide the proof of Theorem~\ref{thm:OurAtkinson}. Throughout this section we are using the following restriction
\begin{equation}
\label{eq:TCond}
T\geq T_0 \geq \max\left\{4\pi,\frac{1}{A''},\frac{\sqrt{e}}{A'},5\pi^2\left(A''+\frac{1}{2}\right)\left(1+\sqrt{1+\frac{2}{\pi A''}}\right)^2\right\}
\end{equation}
on $T$ and $T_0$.

\subsection{Bounding $\mathcal{E}_1$}
\label{sec:BoundingE1}

We are using Proposition~\ref{prop:FirstIntegral} with $\alpha\in(1/2,\infty)\setminus\{1\}$, $\beta=1/2$ and $\gamma=1$, while taking $k=\pm n\in\Z$ for $n\leq N\leq A''T$, and $a$, $b$ and $T$ satisfy the conditions of this proposition. Because
\[
2\sin{x}\cos{y}=\sin{(x+y)}+\sin{(x-y)}, \quad \sin(x-\pi n)=(-1)^{n}\sin{x},
\]
we obtain, after taking the imaginary parts of~\eqref{eq:FirstIntegral} and~\eqref{eq:FirstIntegral2},
\begin{equation*}
\int_{a}^{b} \frac{\sin{\left(T\log{\frac{1+y}{y}}\right)}\cos{\left(2\pi ny\right)}}{y^{\alpha}\sqrt{1+y}\log{\frac{1+y}{y}}}\dif{y} = \frac{\sqrt{2}}{4}\left(\frac{T}{2\pi}\right)^{\frac{1}{4}}\frac{(-1)^{n}}{n^{\frac{3}{4}}}e(T,n)\cos{f(T,n)} + \mathcal{O}_{11}
\end{equation*}
with
\begin{multline*}
\left|\mathcal{O}_{11}\right| \leq \mathcal{U}_1\left(\alpha,\frac{1}{2},1,a\right)a^{1-\alpha}T^{-1} + \mathcal{U}_2\left(\alpha,\frac{1}{2},1\right)b^{\frac{1}{2}-\alpha}n^{-1} \\
+ \mathcal{U}_3\left(\alpha,\frac{1}{2},1,b,n,T\right) + \frac{1}{2}\mathcal{U}_4\left(\alpha,\frac{1}{2},1,A'',n,T\right) + \frac{1}{2}\widehat{\mathcal{U}}_4\left(\alpha,\frac{1}{2},1,A'',n,T\right),
\end{multline*}
where $e(T,n)$ and $f(T,n)$ are defined by~\eqref{eq:Atke} and~\eqref{eq:Atkf}, respectively, while $\mathcal{U}_1,\ldots,\mathcal{U}_4$ and $\widehat{\mathcal{U}}_4$ are the functions from Proposition~\ref{prop:FirstIntegral}. Taking firstly $a\to0$ and $b\to\infty$, and then $\alpha\to1/2$, we get
\[
\mathcal{E}_1 = \Sigma_1(T,N) + \mathcal{O}_{12},
\]
where $\Sigma_1(T,N)$ is defined by~\eqref{eq:AtkSigma1}, and
\begin{equation}
\label{eq:CalO12}
\left|\mathcal{O}_{12}\right| \leq E_{1}\left(A'',T\right)T^{-\frac{1}{4}},
\end{equation}
where
\begin{equation}
\label{eq:E1}
E_{1}\left(A'',T\right) \de \zeta^2\left(\frac{5}{4}\right)E_{11}\left(A''\right)
+A''E_{12}(T)T^{\frac{5}{4}}\left(\log{\left(A''T\right)}+2\gamma-1+\frac{1}{\sqrt{A''T}}\right)
\end{equation}
with
\begin{gather}
E_{11}(x) \de 6\left(2\cdot\mathcal{U}_{41}\left(\frac{1}{2},\frac{1}{2},1,x\right)+\mathcal{U}_{42}\left(\frac{1}{2},\frac{1}{2},1,x\right)\right), \label{eq:E11} \\
E_{12}(T) \de 12\sqrt{2}\left(1.02\cdot Te^{-9.58\cdot10^{-6}\sqrt{T}}+212232.3\cdot e^{-4.79\cdot10^{-6}T}\right). \label{eq:E12}
\end{gather}
The second part of~\eqref{eq:E1} comes from~\eqref{eq:divisorSum} with $\left|\Delta(x)\right|\leq\sqrt{x}$. It is clear that for $T\geq T_0$, while $T_0$ and $A''$ are positive and fixed, the function $E_{1}\left(A'',T\right)$ is bounded.

\subsection{Bounding $\mathcal{E}_2$}

Having
\begin{equation}
\label{eq:BoundOnN}
A'T < N + \frac{1}{2} \leq \left(A''+\frac{1}{2T_0}\right)T
\end{equation}
for $T\geq T_0\geq 1/A''$, the inequality~\eqref{eq:explicitDeltaStar} and the same method as in Section~\ref{sec:BoundingE1} yield
\begin{equation}
\label{eq:CalE2}
\left|\mathcal{E}_2\right| \leq \left(A''+\frac{1}{2T_0}\right)^{\frac{1}{3}}E_2\left(A',A'',T_0,T\right)T^{-\frac{1}{6}}\log{\left(\frac{3e A''}{2}T\right)},
\end{equation}
where
\begin{multline*}
E_2\left(A',A'',T_0,T\right) = \frac{\pi^{\frac{1}{4}}\sqrt{A''+\frac{1}{2T_0}}}{\left(A'\right)^{\frac{3}{4}}\left(2+\pi A'\right)^{\frac{1}{4}}\arsinh{\sqrt{\frac{\pi A'}{2}}}} \\
+\left(A'\right)^{-\frac{5}{4}}E_{11}\left(A''+\frac{1}{2T_0}\right)T^{-1}+E_{12}(T)\sqrt{T}. %\label{eq:E2}
\end{multline*}
Here, $E_{11}$ and $E_{12}$ are defined by~\eqref{eq:E11} and~\eqref{eq:E12}, respectively. It is also clear that for $T\geq T_0$, while $T_0$, $A'$ and $A''$ are positive and fixed, the function $E_2\left(A',A'',T_0,T\right)$ is bounded.

\subsection{Bounding $\mathcal{E}_3$}

For $y>0$ define
\begin{gather*}
\phi_1(y)\de \frac{\sin\left(2\pi\left(N+\frac{1}{2}\right)y\right)}{y}, \quad \phi_2(y)\de \frac{\sqrt{y(1+y)}}{\log{\frac{1+y}{y}}}, \quad \phi_3(y)\de \frac{\sin{\left(T\log{\frac{1+y}{y}}\right)}}{y(1+y)}, \\
\phi_4(y)\de y\phi_1(y), \quad \phi_5(y)\de \frac{1}{y}\int_{\frac{1}{2}-\ie T}^{\frac{1}{2}+\ie T}\frac{1}{u}\left(\frac{1+y}{y}\right)^{u}\dif{u}.
\end{gather*}
Then we can write $\mathcal{E}_3$ as
\begin{multline*}
\mathcal{E}_3 = -\frac{2\left(\log\left(N + \frac{1}{2}\right) + 2 \gamma\right)}{\pi}\left(\int_{0}^{\frac{1}{2N+1}}+\int_{\frac{1}{2N+1}}^{\infty}\right)\phi_1(y)\phi_2(y)\phi_3(y)\dif{y} \\
+ \frac{1}{\pi \ie}\left(\int_{0}^{1}+\int_{1}^{\infty}\right)\phi_4(y)\phi_5(y)\dif{y}.
\end{multline*}
Denote by $\mathcal{E}_{31}$, $\mathcal{E}_{32}$, $\mathcal{E}_{33}$ and $\mathcal{E}_{34}$ the above integrals in the same order.

Because $\phi_1(y)$ is a positive monotonically decreasing function on $[0,1/(2N+1)]$, and $\phi_2(y)$ is a monotonically increasing function, by using the second mean-value theorem twice we obtain
\begin{align*}
\mathcal{E}_{31} &= \pi (2N+1) \int_{0}^{\xi} \phi_2(y)\phi_3(y) \dif{y} = \pi (2N + 1) \phi_2\left(\xi\right) \int_{\eta}^{\xi} \phi_3(y) \dif{y} \\
&= \frac{\pi(2N + 1) \phi_2\left(\xi\right)}{T}\left( \cos{\left(T\log{\frac{1+\xi}{\xi}}\right)}-
\cos{\left(T\log{\frac{1+\eta}{\eta}}\right)}\right)
\end{align*}
for some $0 \leq \eta \leq \xi \leq 1/(2N+1)$. Using~\eqref{eq:BoundOnN} we get from this that
\begin{equation}
\label{eq:calE31}
\left|\mathcal{E}_{31}\right| \leq \frac{2\pi \left(A''+\frac{1}{2 T_0}\right)\sqrt{1 + 2 A'}}{A' \log\left( 1 + 2 A' \right)} T^{-\frac{1}{2}}.
\end{equation}

We apply Proposition~\ref{prop:FirstIntegral} in order to bound $\mathcal{E}_{32}$. Observe that the conditions of Proposition~\ref{prop:FirstIntegral} are satisfied since $T_0\geq 4\pi$ by~\eqref{eq:TCond}. In combination with~\eqref{eq:calE31} it gives us
\[
\left|\mathcal{E}_{31}+\mathcal{E}_{32}\right| \leq E_{31}\left(A',A'',T_0\right)T^{-\frac{1}{2}} + E_{32}\left(A',A'',T_0\right)T^{-\frac{3}{2}} + E_{33}\left(A',T\right),
\]
where
\begin{multline}
\label{eq:E31}
E_{31} \de \frac{2\pi \left(A''+\frac{1}{2 T_0}\right)\sqrt{1 + 2 A'}}{A' \log\left( 1 + 2 A' \right)} + \mathcal{U}_1\left( \frac{3}{2}, \frac{1}{2}, 1, \frac{1}{2 A' T_0} \right) \sqrt{2A'' + \frac{1}{T_0}} \\
+ \frac{\pi\sqrt{A'' + \frac{1}{2 T_0}}}{2\sqrt{2}\arsinh{\sqrt{\frac{\pi A'}{2}}}}\left( \left( \frac{1}{2\pi A'} + \frac{1}{4} \right)^{\frac{1}{4}} + \frac{1}{2} \left( \frac{1}{2\pi A'} + \frac{1}{4} \right)^{-\frac{1}{4}}\right),
\end{multline}
\begin{equation}
\label{eq:E32}
E_{32} \de 3\left(A'\right)^{-\frac{3}{4}} \left(2\cdot\mathcal{U}_{41}\left( \frac{3}{2}, \frac{1}{2}, 1, A'' + \frac{1}{2 T_0}\right) + \mathcal{U}_{42}\left(\frac{3}{2}, \frac{1}{2}, 1, A'' + \frac{1}{2 T_0} \right)\right),
\end{equation}
and
\begin{equation}
\label{eq:E33}
E_{33} \de 6\sqrt{2}\left(1.02\cdot \log{T} e^{-9.58 \cdot 10^{-6} T\sqrt{A'}}
+ \frac{212232.3}{A'T^2} e^{-4.79\cdot 10^{-6}A'T^2}\right).
\end{equation}
The third term on the right-hand side of~\eqref{eq:E31} comes from estimating the modulus of the main term in Proposition~\ref{prop:FirstIntegral}.

Let us consider $\mathcal{E}_{33}$ and $\mathcal{E}_{34}$. By residue calculus we have
\begin{equation}
\label{eq:phi5}
y\phi_5(y) = 2\pi \ie + \int_{-\infty + \ie T}^{\frac{1}{2} + \ie T} \left( \frac{1+y}{y} \right)^u \frac{\dif{u}}{u} + \int_{\frac{1}{2} - \ie T}^{-\infty - \ie T} \left( \frac{1+y}{y} \right)^u \frac{\dif{u}}{u}
\end{equation}
for $0< y\leq 1$. We will bound the right-hand side of the equation above step by step.
\begin{align*}
\left| \int_{-\infty + \ie T}^{\frac{1}{2} + \ie T} \left( \frac{1+y}{y} \right)^u \frac{\dif{u}}{u} \right| \leq \frac{1}{T} \int_{-\infty}^{1/2} \left( \frac{1 + y}{y} \right)^{t} \dif{t} \leq \frac{\sqrt{2}}{\log{2}} \cdot T^{-1} y^{-\frac{1}{2}}.
\end{align*}
Similarly we get the same upper bound for the second integral term on the right-hand side of~\eqref{eq:phi5}. Therefore,
\begin{equation}
\label{eq:c1E33}
\left| \mathcal{E}_{33} \right| = \left| 2\pi \ie \int_0^1 \frac{\sin\left( 2\pi\left( N + \frac{1}{2} \right) y \right)}{y} \dif{y} \right| + \frac{2\sqrt{2}}{\log 2} T^{-1} \int_0^1 \frac{\left| \sin\left( 2\pi\left( N + \frac{1}{2} \right) y \right) \right|}{y^{3/2}} \dif{y},
\end{equation}
where
\begin{multline*}
\left| 2\pi \ie \int_0^1 \frac{\sin\left( 2\pi\left( N + \frac{1}{2} \right) y \right)}{y} \dif{y} \right| = 2 \pi \left| \frac{\pi}{2} - \int_{2 \pi (N + 1/2)}^{\infty} \frac{\sin{v}}{v} \dif{v} \right| \\
\leq 2 \pi \left( \frac{\pi}{2} + \frac{1}{\pi \left( N + 1/2 \right)} \right) = \pi^2 + \frac{2}{N + 1/2} \leq \pi^2 + \frac{2}{A' T}.
\end{multline*}
We used the second mean-value theorem in the inequality above. We split the second integral on the right-hand side of~\eqref{eq:c1E33} into two parts:
\begin{multline*}
\left(\int_{0}^{\frac{1}{N + 1/2}}+\int_{\frac{1}{N + 1/2}}^{1}\right)\frac{\left| \sin\left( 2\pi\left( N + \frac{1}{2} \right) y \right) \right|}{y^{\frac{3}{2}}} \dif{y} \leq 2\pi \left(N + \frac{1}{2}\right) \int_{0}^{\frac{1}{N + 1/2}} \frac{\dif{y}}{\sqrt{y}} \\
+ 2 \left( N + \frac{1}{2}\right)^{\frac{1}{2}} - 2 \leq
2\left(2\pi+1\right)\left( N + \frac{1}{2}\right)^{\frac{1}{2}} - 2.
\end{multline*}
Hence,
\begin{equation*}
\left| \mathcal{E}_{33} \right| \leq \pi^2 + \frac{4\sqrt{2}(2\pi+1)}{\log 2}\left( A'' + \frac{1}{2 T_0} \right)^{\frac{1}{2}} T^{-\frac{1}{2}} + \left( \frac{2}{A'} - \frac{4\sqrt{2}}{\log 2} \right)T^{-1}.
\end{equation*}

Finally, let us estimate also $\mathcal{E}_{34}$. Observe that for $y > 1$ we have
\[
\left|y\phi_5(y)\right| \leq \left(1+\frac{1}{y}\right)^{\frac{1}{2}}\int_{-T}^{T}\frac{\dif{t}}{\sqrt{1/4+t^2}} \leq 2\sqrt{2}\arsinh{\left(2T\right)}.
\]
This implies $\lim_{y\to\infty}\phi_5(y)=0$. Integration by parts thus gives
\begin{flalign*}
%\label{ineq: E''_32}
\mathcal{E}_{34} &= \frac{\cos{\left(2\pi\left(N+\frac{1}{2}\right)\right)}}{2\pi \left(N + \frac{1}{2}\right)}\phi_5(1) %\nonumber
\\
&-\frac{1}{2\pi \left(N + \frac{1}{2}\right)}\int_{1}^{\infty}\frac{\cos{\left(2\pi\left(N+\frac{1}{2}\right)y\right)}}{y^2}\int_{\frac{1}{2}-\ie T}^{\frac{1}{2}+\ie T}\frac{1}{u}\left(\frac{1+y}{y}\right)^{u}\dif{u}\dif{y} %\nonumber
\\
&-\frac{1}{2\pi \left(N + \frac{1}{2}\right)}\int_{1}^{\infty}\frac{\cos{\left(2\pi\left(N+\frac{1}{2}\right)y\right)}}{y^3}\int_{\frac{1}{2}-\ie T}^{\frac{1}{2}+\ie T}\left(\frac{1+y}{y}\right)^{u-1}\dif{u}\dif{y}.
\end{flalign*}
For $y > 1$ we also have
\[
\left|\int_{\frac{1}{2}-\ie T}^{\frac{1}{2}+\ie T}\left(\frac{1+y}{y}\right)^{u-1}\dif{u}\right| \leq 2\left(\log{\frac{1+y}{y}}\right)^{-1}.
\]
Therefore,
\begin{flalign*}
\left|\mathcal{E}_{34}\right| &\leq \frac{1}{2\pi\left(N+\frac{1}{2}\right)}\left(2\sqrt{2}\arsinh{(2T)}\left(1+\int_{1}^{\infty}\frac{\dif{y}}{y^2}\right)+2\int_{1}^{\infty}\frac{\dif{y}}{y^3\log{\frac{1+y}{y}}}\right) \\
&\leq \frac{2\sqrt{2}\arsinh{(2T)}+\frac{1}{\log{2}}}{\pi A'T},
\end{flalign*}
where we used the fact that $y\log{\left(1+1/y\right)}\geq\log{2}$.

After putting all together we obtain
\begin{flalign}
\left|\mathcal{E}_{3}\right| &\leq \frac{2}{\pi}\left(E_{31}+E_{32}T^{-1}+\sqrt{T}E_{33}\right)T^{-\frac{1}{2}}\log{\left(\frac{3e^{2\gamma}A''}{2}T\right)} \nonumber \\
&+ \pi + \frac{4\sqrt{2}\left(2\pi+1\right)\sqrt{A''+\frac{1}{2T_0}}}{\pi\log{2}}T^{-\frac{1}{2}} + \frac{2\sqrt{2}\arsinh{(2T)}}{\pi^2 A'T} \nonumber \\
&+ \frac{1}{\pi}\left(\frac{2}{A'}+\frac{1}{\log{2}}\left(\frac{1}{\pi A'}-4\sqrt{2}\right)\right)T^{-1}, \label{eq:CalE3}
\end{flalign}
where $E_{31}\left(A',A'',T_0\right)$, $E_{32}\left(A',A'',T_0\right)$ and $E_{33}\left(A',T\right)$ are defined by~\eqref{eq:E31},~\eqref{eq:E32} and~\eqref{eq:E33}, respectively. It is clear that $\sqrt{T}E_{33}$ is bounded for $T\geq T_0$ and fixed $A'$.

\subsection{Bounding $\mathcal{E}_4$}

We can write $\mathcal{E}_4 = \mathcal{E}_{41} + \mathcal{E}_{42} + \mathcal{E}_{43}$, where
\begin{gather*}
\mathcal{E}_{41} \de \int_{N+\frac{1}{2}}^{\infty}\frac{\Delta^{\ast}(x)}{x}\int_{0}^{\infty}\frac{4T\cos{\left(2\pi xy\right)}\cos{\left(T\log{\frac{1+y}{y}}\right)}}{y^{\frac{1}{2}}(1+y)^{\frac{3}{2}}\log{\frac{1+y}{y}}}\dif{y}\dif{x}, \\
\mathcal{E}_{42} \de -\int_{N+\frac{1}{2}}^{\infty}\frac{\Delta^{\ast}(x)}{x}\int_{0}^{\infty}\frac{2\cos{\left(2\pi xy\right)}\sin{\left(T\log{\frac{1+y}{y}}\right)}}{y^{\frac{1}{2}}(1+y)^{\frac{3}{2}}\log{\frac{1+y}{y}}}\dif{y}\dif{x}, \\
\mathcal{E}_{43} \de -\int_{N+\frac{1}{2}}^{\infty}\frac{\Delta^{\ast}(x)}{x}\int_{0}^{\infty}\frac{4\cos{\left(2\pi xy\right)}\sin{\left(T\log{\frac{1+y}{y}}\right)}}{y^{\frac{1}{2}}(1+y)^{\frac{3}{2}}\log^2{\frac{1+y}{y}}}\dif{y}\dif{x}.
\end{gather*}
Our plan is to estimate each of the above double integrals by using Proposition~\ref{prop:FirstIntegral} to bound inner integrals, and Proposition~\ref{prop:SecondIntegral} in combination with Vorono\"{i}'s summation formula (see Lemma~\ref{lem:Voronoi}) on the first integral to obtain the second term $\Sigma_2$ from Atkinson's formula.

By Proposition~\ref{prop:FirstIntegral} for $\alpha=1/2$, $\beta=3/2$, $\gamma=1$, and $a\to0$, $b\to\infty$, we obtain
\begin{multline*}
\int_{0}^{\infty}\frac{4T\cos{\left(2\pi xy\right)}\cos{\left(T\log{\frac{1+y}{y}}\right)}}{y^{\frac{1}{2}}(1+y)^{\frac{3}{2}}\log{\frac{1+y}{y}}}\dif{y} = \\ \frac{T\cos{\left(2T\arsinh{\sqrt{\frac{\pi x}{2T}}}+\sqrt{(\pi x)^2+2\pi xT}-\pi x+\frac{\pi}{4}\right)}}{\sqrt{2x}\left(\arsinh{\sqrt{\frac{\pi x}{2T}}}\right)\left(\sqrt{\frac{T}{2\pi x}+\frac{1}{4}}+\frac{1}{2}\right)\left(\frac{T}{2\pi x}+\frac{1}{4}\right)^{\frac{1}{4}}} + \mathcal{O}_{41}(x)
\end{multline*}
with
\[
\left|\mathcal{O}_{41}(x)\right| \leq E_{41}\left(A',T_0,1\right)x^{-\frac{1}{2}} + E_{42}\left(T,x\right),
\]
where
\begin{gather*}
E_{41}\left(A',T_0,\mu\right) \de 12\left(2\cdot\mathcal{U}_{43}\left(\frac{1}{2},\frac{3}{2},\mu,A',T_0\right)+\mathcal{U}_{44}\left(\frac{1}{2},\frac{3}{2},\mu,A'\right)\right), \\
E_{42}\left(T,x\right) \de 24\sqrt{2}\left(1.02\cdot T\log{T}e^{-9.58\cdot10^{-6}\sqrt{Tx}}+\frac{212232.3}{x}e^{-4.79\cdot10^{-6}Tx}\right).
\end{gather*}
Using Lemma~\ref{lem:Voronoi}, we have $\mathcal{E}_{41} = \mathcal{E}_{411} + \mathcal{E}_{412} + \mathcal{E}_{413} + \mathcal{E}_{414}$, where
\begin{gather*}
\mathcal{E}_{411} \de \frac{T}{\pi}\sum_{n=1}^{\infty}\frac{d(n)}{n^{\frac{3}{4}}}
\int_{\sqrt{N+\frac{1}{2}}}^{\infty}\frac{\cos{\left(f\left(T,x^2\right)-\pi x^2+\frac{\pi}{2}\right)}\cos{\left(4\pi x\sqrt{n}-\frac{\pi}{4}\right)}}{x^{\frac{3}{2}}\arsinh{\left(x\sqrt{\frac{\pi}{2T}}\right)}\left(\sqrt{\frac{T}{2\pi x^2}+\frac{1}{4}}+\frac{1}{2}\right)\left(\frac{T}{2\pi x^2}+\frac{1}{4}\right)^{\frac{1}{4}}}\dif{x}, \\
\mathcal{E}_{412} \de \frac{-3T}{32\pi^2}\sum_{n=1}^{\infty}\frac{d(n)}{n^{\frac{5}{4}}}
\int_{\sqrt{N+\frac{1}{2}}}^{\infty}\frac{\cos{\left(f\left(T,x^2\right)-\pi x^2+\frac{\pi}{2}\right)}\sin{\left(4\pi x\sqrt{n}-\frac{\pi}{4}\right)}\dif{x}}{x^{\frac{5}{2}}\arsinh{\left(x\sqrt{\frac{\pi}{2T}}\right)}\left(\sqrt{\frac{T}{2\pi x^2}+\frac{1}{4}}+\frac{1}{2}\right)\left(\frac{T}{2\pi x^2}+\frac{1}{4}\right)^{\frac{1}{4}}}, \\
\mathcal{E}_{413} \de \frac{T}{\sqrt{2}}\int_{N+\frac{1}{2}}^{\infty}\frac{V\cdot\cos{\left(f\left(T,x\right)-\pi x+\frac{\pi}{2}\right)}}{x^{\frac{9}{4}}\left(\arsinh{\sqrt{\frac{\pi x}{2T}}}\right)\left(\sqrt{\frac{T}{2\pi x}+\frac{1}{4}}+\frac{1}{2}\right)\left(\frac{T}{2\pi x}+\frac{1}{4}\right)^{\frac{1}{4}}}\dif{x}, \\
\mathcal{E}_{414} \de \int_{N+\frac{1}{2}}^{\infty}\frac{\Delta^{\ast}(x)}{x}\mathcal{O}_{41}(x)\dif{x},
\end{gather*}
and $f(T,n)$ is defined by~\eqref{eq:Atkf} while $V$ with $|V|\leq V(x)$ is from Lemma~\ref{lem:Voronoi}. Firstly, we will estimate $\mathcal{E}_{413}$. Because $1\leq A'T$ by~\eqref{eq:TCond}, we have
\begin{equation}
\label{eq:CalE413}
\left|\mathcal{E}_{413}\right| \leq \frac{T}{\arsinh{\sqrt{\frac{\pi A'}{2}}}}\int_{A'T}^{\infty}\frac{V(x)}{x^{\frac{9}{4}}}\dif{x} \leq E_{43}\left(A',T_0\right)T^{-\frac{1}{4}},
\end{equation}
where
\begin{equation*}
%\label{eq:E43}
E_{43}\left(A',T_0\right) \de \frac{4\left(\frac{3\zeta^2\left(\frac{7}{4}\right)}{2^{11}\pi^{3}\sqrt{2}}+\frac{15\zeta^2\left(\frac{9}{4}\right)}{2^{16}\pi^{4}\sqrt{2A'T_0}}+\frac{\left(1+\frac{3}{32\pi}\right)\zeta^2\left(\frac{11}{4}\right)}{9\pi\sqrt{2}A'T_0}\right)}{\left(A'\right)^{\frac{5}{4}}\arsinh{\sqrt{\frac{\pi A'}{2}}}}.
\end{equation*}
Next, by~\eqref{eq:TCond} we have $\sqrt{e}\leq A'T$ and thus
\begin{equation*}
\left|\mathcal{E}_{414}\right| \leq E_{41}\left(A',T_0,1\right)\int_{A'T}^{\infty}\frac{\log{(ex)}}{x^{\frac{7}{6}}}\dif{x} + \frac{\log{\left(eA'T\right)}}{\left(A'T\right)^{\frac{2}{3}}}\int_{A'T}^{\infty} E_{42}(T,x)\dif{x},
\end{equation*}
where we used~\eqref{eq:explicitDeltaStar}. Before proceeding to the terms $\mathcal{E}_{411}$ and $\mathcal{E}_{412}$, we will firstly state bounds for $\left|\mathcal{E}_{42}\right|$ and $\left|\mathcal{E}_{43}\right|$. The approach is similar as before, using Proposition~\ref{prop:FirstIntegral}, but we bound the main term trivially. We obtain
\begin{multline*}
\left|\mathcal{E}_{42}\right| \leq \left(\frac{1}{2\arsinh{\sqrt{\frac{\pi A'}{2}}}}+\frac{1}{2T}E_{41}\left(A',T_0,1\right)\right)\int_{A'T}^{\infty}\frac{\log{(ex)}}{x^{\frac{7}{6}}}\dif{x} \\
+ \frac{\log{\left(eA'T\right)}}{2T\left(A'T\right)^{\frac{2}{3}}}\int_{A'T}^{\infty} E_{42}(T,x)\dif{x}
\end{multline*}
and
\begin{multline*}
\left|\mathcal{E}_{43}\right| \leq \left(\frac{1}{2\left(\arsinh{\sqrt{\frac{\pi A'}{2}}}\right)^2}+\frac{3}{2T}E_{41}\left(A',T_0,2\right)\right)\int_{A'T}^{\infty}\frac{\log{(ex)}}{x^{\frac{7}{6}}}\dif{x} \\
+ \frac{3\log{\left(eA'T\right)}}{2\left(A'T\right)^{\frac{2}{3}}}\int_{A'T}^{\infty} E_{42}(T,x)\dif{x} + 4\mathcal{U}_2\left(\frac{1}{2},\frac{3}{2},2\right)\int_{A'T}^{\infty}\frac{\log{(ex)}}{x^{\frac{5}{3}}}\dif{x}.
\end{multline*}
Therefore,
\begin{equation}
\label{eq:CalE42E43E414}
\left|\mathcal{E}_{42}\right| + \left|\mathcal{E}_{43}\right| + \left|\mathcal{E}_{414}\right| \leq E_{44}\left(A',T,T_0\right) + 3E_{45}\left(A',T\right),
\end{equation}
where
\begin{flalign*}
%\label{eq:E44}
E_{44}\left(A',T,T_0\right) &\de  \frac{6\left(7+\log{\left(A'T\right)}\right)}{\left(A'T\right)^{\frac{1}{6}}}\Bigg(\left(1+\frac{1}{2T_0}\right)E_{41}\left(A',T_0,1\right)+\frac{3}{2T_0}E_{41}\left(A',T_0,2\right) \nonumber \\
&+\frac{1}{2\arsinh{\sqrt{\frac{\pi A'}{2}}}}\left(1+\frac{1}{\arsinh{\sqrt{\frac{\pi A'}{2}}}}\right)\Bigg) %\nonumber
\\
&+3\mathcal{U}_2\left(\frac{1}{2},\frac{3}{2},2\right)\frac{5+2\log{\left(A'T\right)}}{\left(A'T\right)^{\frac{2}{3}}}.
\end{flalign*}
and
\begin{flalign*}
E_{45}\left(A',T\right) &\de 7.55\cdot10^{11}\left(\frac{1}{T\sqrt{A'}}+\frac{9.58}{10^6}\right)\frac{T^{\frac{1}{3}}\log{\left(eA'T\right)}\log{T}}{\left(A'\right)^{\frac{1}{6}}}e^{-9.58\cdot10^{-6}T\sqrt{A'}} %\nonumber
\\
&+ 1.51\cdot10^{12}\frac{\log{\left(eA'T\right)}}{\left(A'\right)^\frac{5}{3}T^{\frac{8}{3}}}e^{-4.79\cdot10^{-6}A'T^2}. %\label{eq:E45}
\end{flalign*}
It remains to estimate $\mathcal{E}_{411}$ and $\mathcal{E}_{412}$, where we will apply Proposition~\ref{prop:SecondIntegral}.

\begin{lemma}
\label{lem:summation}
Let $1\leq X<Y<Z$. Then
\[
\sum_{X<n\leq Y}\frac{d(n)}{Z-n} \leq \left(\log{Z}+2\gamma\right)\log{\frac{Z-X}{Z-Y}}+\frac{2\sqrt{Y}}{Z-Y} + \left|\Li2\left(1-\frac{X}{Z}\right)\right| + \left|\Li2\left(1-\frac{Y}{Z}\right)\right|,
\]
where $\Li2(\cdot)$ is the dilogarithm. Also,
\[
\sum_{X<n\leq Y}\frac{d(n)}{n-Z} \leq \left(\log{Z}+2\gamma\right)\log{\frac{Y-Z}{X-Z}}+\frac{2\sqrt{X}}{X-Z} + \left|\Li2\left(1-\frac{X}{Z}\right)\right| + \left|\Li2\left(1-\frac{Y}{Z}\right)\right|
\]
for $1\leq Z<X<Y$.
\end{lemma}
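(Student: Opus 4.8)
The plan is to derive both inequalities by Abel (partial) summation against Dirichlet's formula~\eqref{eq:divisorSum}, written as $D(t)=t\log t+(2\gamma-1)t+\Delta(t)$ with $|\Delta(t)|\le\sqrt t$ for $t\ge1$ (the bound from~\cite{BBR}). I treat the first inequality in detail; the second is entirely analogous. First I would dispose of the integer-endpoint nuisance by assuming $X,Y\notin\N$: the right-hand side varies continuously in $X,Y$ while the left-hand side only jumps upward at integers, so the general case is recovered at the cost of absorbing a single $d(\cdot)$-term into the slack; equivalently one works throughout with $\sideset{}{'}\sum$ and $\Delta^{\ast}$ from~\eqref{eq:DeltaStar}. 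Writing the sum as a Riemann--Stieltjes integral and inserting $dD(t)=(\log t+2\gamma)\,dt+d\Delta(t)$ splits it into a \emph{smooth} part $S=\int_X^Y\frac{\log t+2\gamma}{Z-t}\,dt$ and a \emph{fluctuating} part $R=\int_X^Y\frac{d\Delta(t)}{Z-t}$.

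For the smooth part I would compute $S$ exactly. Substituting $u=t/Z$ and using the elementary antiderivative identity $\frac{d}{du}\Li2(1-u)=\frac{\log u}{1-u}$ (equivalently $\Li2'(z)=-\log(1-z)/z$), one obtains
\[
S=(\log Z+2\gamma)\log\frac{Z-X}{Z-Y}+\Li2\!\left(1-\frac{Y}{Z}\right)-\Li2\!\left(1-\frac{X}{Z}\right),
\]
and then $\Li2(1-Y/Z)-\Li2(1-X/Z)\le\left|\Li2(1-X/Z)\right|+\left|\Li2(1-Y/Z)\right|$; since $Z-t>0$ and $\log t\ge0$ on $[X,Y]$ all terms are well defined. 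This already produces the main term and the two dilogarithmic terms of the claimed bound. For the second inequality the identical computation with the kernel $1/(t-Z)$ (legitimate because $t-Z\ge X-Z>0$ on $[X,Y]$) yields $(\log Z+2\gamma)\log\frac{Y-Z}{X-Z}+\Li2(1-X/Z)-\Li2(1-Y/Z)$, and the same triangle-inequality step applies.

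For the fluctuating part I would integrate by parts once, getting $R=\frac{\Delta(Y)}{Z-Y}-\frac{\Delta(X)}{Z-X}-\int_X^Y\frac{\Delta(t)}{(Z-t)^2}\,dt$, and then invoke $|\Delta(t)|\le\sqrt t$. The key observation is that $\int_X^Y(Z-t)^{-2}\,dt=(Z-Y)^{-1}-(Z-X)^{-1}$, so estimating $|\Delta(t)|\le\sqrt t\le\sqrt Y$ inside the integral produces a term $-\sqrt Y\,(Z-X)^{-1}$ which exactly cancels the boundary contribution $|\Delta(X)|/(Z-X)\le\sqrt Y/(Z-X)$, leaving precisely $2\sqrt Y/(Z-Y)$; combined with $S$ this gives the first inequality. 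For the second inequality the corresponding integration by parts gives $\frac{\Delta(Y)}{Y-Z}-\frac{\Delta(X)}{X-Z}+\int_X^Y\frac{\Delta(t)}{(t-Z)^2}\,dt$ with $\int_X^Y(t-Z)^{-2}\,dt=(X-Z)^{-1}-(Y-Z)^{-1}$, and since $t\mapsto\sqrt t/(t-Z)$ is decreasing on $(Z,\infty)$ the endpoint nearest the pole is now $X$; here one must retain the sharp bound $|\Delta(t)|\le\sqrt t$ (not the crude $\sqrt Y$) and use monotonicity together with the concentration of the weight $(t-Z)^{-2}$ near $t=X$ — where $\sqrt t$ is smallest — to pair the $Y$-endpoint contribution against part of the integral, which is what yields the stated $2\sqrt X/(X-Z)$.

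\emph{The main obstacle is this last estimate.} For a generic function satisfying only $|\Delta(t)|\le\sqrt t$ the naive bookkeeping in the second case overshoots (one gets $2\sqrt Y/(X-Z)$, or $3\sqrt X/(X-Z)$ after crude grouping), because the "wrong-endpoint'' term $\Delta(X)/(X-Z)$ does not cancel as cleanly as in the first case. Extracting the sharp $2\sqrt X/(X-Z)$ therefore requires the careful weight-concentration-near-the-pole argument sketched above (and, if one wants to stay strictly within $|\Delta(t)|\le\sqrt t$, a delicate splitting of $\int_X^Y\frac{\Delta(t)}{(t-Z)^2}\,dt$ at an intermediate point before applying the bound on each piece). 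Everything else — the Stieltjes formalism, the exact evaluation of $S$, and the endpoint-continuity argument — is routine.
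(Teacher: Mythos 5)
Your overall route is the same as the paper's: partial summation against~\eqref{eq:divisorSum}, exact evaluation of the smooth part (producing the $(\log Z+2\gamma)$-term and the two dilogarithms, exactly as in the paper, where your antiderivative identity for $\Li2$ appears implicitly in the computation of $\int_X^Y u\log u\,(Z-u)^{-2}\dif{u}$), and control of the fluctuating part via $\left|\Delta(t)\right|\leq\sqrt{t}$. Your treatment of the first inequality is complete and correct, and essentially identical to the paper's; the endpoint/integer discussion is a harmless refinement.

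The second inequality is where you stop, and the gap is genuine: you never produce the bound $2\sqrt{X}/(X-Z)$, only the hope that a ``weight-concentration near the pole'' argument or a splitting of the integral at an intermediate point will yield it. In fact no such argument exists within the framework you are using. With only the envelope $\left|\Delta(t)\right|\leq\sqrt{t}$ and no sign information, the extremal configuration $\Delta(Y)=\sqrt{Y}$, $\Delta(X)=-\sqrt{X}$, $\Delta(t)=\sqrt{t}$ gives exactly
\[
\frac{\sqrt{Y}}{Y-Z}+\frac{\sqrt{X}}{X-Z}+\int_X^Y\frac{\sqrt{t}}{(t-Z)^2}\,\dif{t}
=\frac{2\sqrt{X}}{X-Z}+\frac{1}{2}\int_X^Y\frac{\dif{t}}{\sqrt{t}\,(t-Z)},
\]
since one further integration by parts yields $\int_X^Y\sqrt{t}\,(t-Z)^{-2}\dif{t}=\frac{\sqrt{X}}{X-Z}-\frac{\sqrt{Y}}{Y-Z}+\frac{1}{2}\int_X^Y t^{-1/2}(t-Z)^{-1}\dif{t}$, and the last term is strictly positive on $(Z,\infty)$. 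So the stated $2\sqrt{X}/(X-Z)$ is not reachable by any rearrangement of the envelope bound, and the repair you sketch cannot work. (For what it is worth, the paper's own proof of this case is equally terse: it reuses the comparison $\int_X^Y\sqrt{u}\,(Z-u)^{-2}\dif{u}\leq\sqrt{Y}(Z-Y)^{-1}-\sqrt{X}(Z-X)^{-1}$, which rests on $\frac{u+Z}{2\sqrt{u}}\geq\sqrt{u}$ and is therefore valid only for $u\leq Z$; in the regime $Z<X<Y$ it reverses. The surplus is $\frac{1}{2}\int_X^Y t^{-1/2}(t-Z)^{-1}\dif{t}=O\left(Z^{-1/2}\log Z\right)$ for the choice $X=Z+\sqrt{Z}$, $Y=2Z$ used in~\eqref{eq:summation2}, so nothing downstream is affected, but an honest write-up must either carry this extra term explicitly or settle for the weaker constant $3\sqrt{X}/(X-Z)$.) Your instinct that the second case is the real obstacle is correct; your proposal does not, however, prove it.
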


\begin{proof}
Let $1\leq X<Y<Z$ or $1\leq Z<X<Y$. By partial summation we have
\[
\sum_{X<n\leq Y}\frac{d(n)}{Z-n} = \frac{D(Y)}{Z-Y} - \frac{D(X)}{Z-X} - \int_{X}^{Y}\frac{D(u)}{\left(Z-u\right)^2}\dif{u},
\]
where $D(u)$ is defined by~\eqref{eq:divisorSum}. It is not hard to see that
\begin{flalign*}
\int_{X}^{Y}\frac{u\log{u}}{\left(Z-u\right)^2}\dif{u} &= \left(1+\log{Z}\right)\log{\frac{Z-Y}{Z-X}} + Z\left(\frac{\log{Y}}{Z-Y}-\frac{\log{X}}{Z-X}\right) \\
&+ \log{\frac{X}{Y}}
+ \Li2\left(1-\frac{X}{Z}\right) - \Li2\left(1-\frac{Y}{Z}\right),
\end{flalign*}
\[
\int_{X}^{Y}\frac{u\dif{u}}{\left(Z-u\right)^2} = \frac{Y}{Z-Y} - \frac{X}{Z-X} + \log{\frac{Z-Y}{Z-X}},
\]
and
\[
\int_{X}^{Y}\frac{\left|\Delta(u)\right|}{\left(Z-u\right)^2}\dif{u} \leq \int_{X}^{Y}\frac{\sqrt{u}\dif{u}}{\left(Z-u\right)^2} \leq \frac{\sqrt{Y}}{Z-Y} - \frac{\sqrt{X}}{Z-X}.
\]
From this we obtain
\begin{multline*}
\sum_{X<n\leq Y}\frac{d(n)}{Z-n} = \left(\log{Z}+2\gamma\right)\log{\frac{Z-X}{Z-Y}} + \frac{\Delta(Y)}{Z-Y}-\frac{\Delta(X)}{Z-X} \\
+ \Li2\left(1-\frac{Y}{Z}\right) - \Li2\left(1-\frac{X}{Z}\right) - \int_{X}^{Y}\frac{\Delta(u)}{\left(Z-u\right)^2}\dif{u}.
\end{multline*}
If $1\leq X<Y<Z$, the first inequality from Lemma~\ref{lem:summation} clearly follows. If $1\leq Z<X<Y$, the second inequality from Lemma~\ref{lem:summation} also easily follows from the last equality after multiplying it by $-1$. Lemma~\ref{lem:summation} is thus proved.
\end{proof}

We need Lemma~\ref{lem:summation} for $X=Z/2$, $Y=Z-\sqrt{Z}$, and for $X=Z+\sqrt{Z}$, $Y=2Z$. Let $Z>4$. Taking into account that $\left|\Li2(x)\right|$ is a strictly increasing function for $0\leq x<1$, and is a strictly decreasing function for $x\leq0$, we obtain
\begin{equation}
\label{eq:summation1}
\sum_{Z/2<n\leq Z-\sqrt{Z}}\frac{d(n)}{Z-n} \leq \frac{1}{2}\log^2{Z} - \left(\log{2}-\gamma\right)\log{Z} + 2.365
\end{equation}
and
\begin{equation}
\label{eq:summation2}
\sum_{Z+\sqrt{Z}<n\leq 2Z}\frac{d(n)}{n-Z} \leq \frac{1}{2}\log^2{Z} + \gamma\log{Z} + 3.721.
\end{equation}
Observe that both inequalities are asymptotically correct since the inequalities from Lemma~\ref{lem:summation} are asymptotically sharp.

\begin{lemma}
\label{lem:divsum1}
Let $Y\geq 5$ and $1\leq X<Y$. Then
\[
\sum_{n\leq Y}\frac{d(n)}{\sqrt{n}} \leq 2\sqrt{Y}\log{Y}
\]
and
\begin{multline*}
\sum_{X\leq n\leq Y}\frac{d(n)}{\sqrt{n}} \leq \left(Y-X\right)X^{-\frac{1}{2}}\log{X} + 2\left(Y-X\right)X^{-\frac{1}{2}} \\
- 2\left(1-\gamma\right)\left(Y-X\right)Y^{-\frac{1}{2}} + \log{\sqrt{\frac{Y}{X}}} + 2 + \sqrt{3}.
\end{multline*}
\end{lemma}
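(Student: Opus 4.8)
The plan is to derive both bounds by partial (Abel) summation against Dirichlet's estimate $D(x)=\sum_{n\le x}d(n)=x\log x+(2\gamma-1)x+\Delta(x)$ together with $\lvert\Delta(x)\rvert\le\sqrt{x}$ for $x\ge1$ (see~\eqref{eq:divisorSum} and the beginning of Section~\ref{sec:Voronoi}), using two elementary auxiliary facts: the map $u\mapsto(\log u+2)/\sqrt{u}$ is non-increasing on $[1,\infty)$, since its derivative equals $-\log u/(2u^{3/2})\le0$ there, and the classical bound $d(n)\le\sqrt{3n}$.

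For the first inequality I would write, by Abel summation,
\[
\sum_{n\le Y}\frac{d(n)}{\sqrt{n}}=\frac{D(Y)}{\sqrt{Y}}+\frac12\int_{1}^{Y}\frac{D(u)}{u^{3/2}}\,\dif{u},
\]
split $D=M+\Delta$ with $M(u)=u\log u+(2\gamma-1)u$, and use $M'(u)=\log u+2\gamma$ to collapse the smooth contribution to $\int_{1}^{Y}(\log u+2\gamma)u^{-1/2}\,\dif{u}+M(1)=2\sqrt{Y}\log Y+(4\gamma-4)\sqrt{Y}+(3-2\gamma)$. The $\Delta$-terms are controlled by $\lvert\Delta(Y)\rvert/\sqrt{Y}\le1$ and $\tfrac12\int_{1}^{Y}\lvert\Delta(u)\rvert u^{-3/2}\,\dif{u}\le\tfrac12\int_{1}^{Y}u^{-1}\,\dif{u}=\tfrac12\log Y$, so that $\sum_{n\le Y}d(n)n^{-1/2}\le 2\sqrt{Y}\log Y+g(Y)$ with $g(Y)=(4\gamma-4)\sqrt{Y}+\tfrac12\log Y+(4-2\gamma)$. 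The claim then reduces to $g(Y)\le0$ for $Y\ge5$, which holds because $g(5)<0$ numerically and $g'(Y)=\tfrac{1}{2Y}\big((4\gamma-4)\sqrt{Y}+1\big)<0$ on $[5,\infty)$, so $g$ is decreasing there.

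For the second inequality I would first isolate the term $n=X$ (which occurs only when $X\in\N$), bounded by $d(X)/\sqrt{X}\le\sqrt{3}$, and apply Abel summation to the remaining range, using the convention $D(u)=\sum_{n\le u}d(n)$,
\[
\sum_{X<n\le Y}\frac{d(n)}{\sqrt{n}}=\frac{D(Y)}{\sqrt{Y}}-\frac{D(X)}{\sqrt{X}}+\frac12\int_{X}^{Y}\frac{D(u)}{u^{3/2}}\,\dif{u}.
\]
As before the smooth part collapses to $\int_{X}^{Y}(\log u+2\gamma)u^{-1/2}\,\dif{u}$, which I would split as $\int_{X}^{Y}\frac{\log u+2}{\sqrt{u}}\,\dif{u}-2(1-\gamma)\int_{X}^{Y}u^{-1/2}\,\dif{u}$. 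Monotonicity of $(\log u+2)/\sqrt{u}$ bounds the first integral by $(Y-X)(\log X+2)X^{-1/2}$, and $\int_{X}^{Y}u^{-1/2}\,\dif{u}=2(\sqrt{Y}-\sqrt{X})\ge(Y-X)Y^{-1/2}$ (from $\sqrt{Y}+\sqrt{X}\le2\sqrt{Y}$) turns the second contribution into $-2(1-\gamma)(Y-X)Y^{-1/2}$. The $\Delta$-terms contribute at most $\lvert\Delta(Y)\rvert/\sqrt{Y}+\lvert\Delta(X)\rvert/\sqrt{X}+\tfrac12\int_{X}^{Y}\lvert\Delta(u)\rvert u^{-3/2}\,\dif{u}\le 2+\log\sqrt{Y/X}$, and adding the separated $\sqrt{3}$ reproduces the stated bound exactly.

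I do not expect a serious obstacle. The only points needing care are the Riemann--Stieltjes bookkeeping at integer values of $X$ and $Y$, which is handled cleanly by peeling off the $n=X$ term and by the right-continuous convention for $D$, and the numerical sign check $g(Y)\le0$ for $Y\ge5$, which is tight (it fails at $Y=4$) and is precisely what makes the hypothesis $Y\ge5$ necessary.
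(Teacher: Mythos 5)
Your proposal is correct and follows essentially the same route as the paper: partial (Abel) summation against Dirichlet's formula with $\left|\Delta(u)\right|\leq\sqrt{u}$, a monotonicity/mean-value bound on the main terms (your monotone-integrand estimate for $(\log u+2)/\sqrt{u}$ is the same computation as the paper's mean-value-theorem step), the same remainder $g(Y)$ with the sign check at $Y=5$, and the same $\sqrt{3}$ term for a possible $n=X$ endpoint. The only cosmetic difference is that you quote $d(n)\leq\sqrt{3n}$ as classical, whereas the paper verifies $\max_{n}d(n)/\sqrt{n}=\sqrt{3}$ explicitly via a finite check plus the Nicolas--Robin bound.
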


\begin{proof}
By partial summation and~\eqref{eq:divisorSum} we have
\begin{flalign*}
\sum_{X<n\leq Y}\frac{d(n)}{\sqrt{n}} &= \frac{D(Y)}{\sqrt{Y}} - \frac{D(X)}{\sqrt{X}} + \frac{1}{2}\int_{X}^{Y}\frac{D(u)}{u\sqrt{u}}\dif{u} \\
&= 2\left(\sqrt{Y}\log{Y}-\sqrt{X}\log{X}\right) - 4\left(1-\gamma\right)\left(\sqrt{Y}-\sqrt{X}\right) \\
&+ \frac{\Delta(Y)}{\sqrt{Y}} - \frac{\Delta(X)}{\sqrt{X}} + \frac{1}{2}\int_{X}^{Y}\frac{\Delta(u)}{u\sqrt{u}}\dif{u}.
\end{flalign*}
Take $X=1$. Because $\Delta(1)=2-2\gamma$, the above equality and $\left|\Delta(u)\right|\leq\sqrt{u}$ imply
\[
\sum_{n\leq Y}\frac{d(n)}{\sqrt{n}} \leq 2\sqrt{Y}\log{Y} - 4\left(1-\gamma\right)\sqrt{Y}+\frac{1}{2}\log{Y}+2\left(2-\gamma\right).
\]
The first bound from Lemma~\ref{lem:divsum1} now follows because the remainder in the previous inequality is a decreasing function for $Y\geq 1$ and its value for $Y=5$ is negative. Concerning the second bound, the mean-value theorem implies
\[
\sqrt{Y}\log{Y}-\sqrt{X}\log{X} \leq \frac{2+\log{X}}{2\sqrt{X}}\left(Y-X\right), \quad \sqrt{Y}-\sqrt{X} \geq \frac{Y-X}{2\sqrt{Y}}.
\]
Also,
\[
\max\left\{\frac{d(n)}{\sqrt{n}}\colon n\in\N\right\} = \sqrt{3},
\]
because this is true for $1\leq n \leq 10^3$ by numerical verification, and
\[
\frac{d(n)}{\sqrt{n}} \leq n^{\frac{1.538\log{2}}{\log{\log{n}}}-\frac{1}{2}} < \sqrt{3}
\]
for $n\geq 10^3$ by~\cite{NicolasRobin}. This concludes the proof of Lemma~\ref{lem:divsum1}.
\end{proof}

\begin{lemma}
\label{lem:divsum2}
Let $Y\geq 2$ and $1\leq X<Y$. Then
\[
\sum_{n\leq Y}\frac{d(n)}{n^{\frac{3}{4}}} \leq 4Y^{\frac{1}{4}}\log{Y}
\]
and
\begin{multline*}
\sum_{X<n\leq Y} \frac{d(n)}{n^{\frac{3}{4}}} \leq \left(Y-X\right)X^{-\frac{3}{4}}\log{X} + 4\left(Y-X\right)X^{-\frac{3}{4}} \\
- 2\left(2-\gamma\right)\left(Y-X\right)Y^{-\frac{3}{4}} + 4X^{-\frac{1}{4}} - 2Y^{-\frac{1}{4}}.
\end{multline*}
\end{lemma}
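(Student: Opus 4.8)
The plan is to mirror the proof of Lemma~\ref{lem:divsum1} with the weight $n^{-1/2}$ replaced by $n^{-3/4}$. I would start from Abel summation together with Dirichlet's formula~\eqref{eq:divisorSum}, writing $D(u) = u\log u + (2\gamma-1)u + \Delta(u)$, to obtain
\[
\sum_{X<n\le Y}\frac{d(n)}{n^{3/4}} = D(Y)Y^{-3/4} - D(X)X^{-3/4} + \frac{3}{4}\int_{X}^{Y} D(u)u^{-7/4}\,\dif{u}.
\]
Substituting the three pieces of $D$ and using the elementary antiderivatives $\int u^{-3/4}\log u\,\dif{u} = 4u^{1/4}\log u - 16u^{1/4}$ and $\int u^{-3/4}\,\dif{u} = 4u^{1/4}$, I would collect terms: the coefficients of $Y^{1/4}\log Y$ (and of $X^{1/4}\log X$) add up to $4$, and the coefficients of the plain powers $Y^{1/4}$ and $X^{1/4}$ combine into the single term $-8(2-\gamma)(Y^{1/4}-X^{1/4})$, leaving the exact identity
\begin{multline*}
\sum_{X<n\le Y}\frac{d(n)}{n^{3/4}} = 4\left(Y^{1/4}\log Y - X^{1/4}\log X\right) - 8(2-\gamma)\left(Y^{1/4}-X^{1/4}\right) \\
+ \Delta(Y)Y^{-3/4} - \Delta(X)X^{-3/4} + \frac{3}{4}\int_{X}^{Y} \Delta(u)u^{-7/4}\,\dif{u}.
\end{multline*}

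For the second bound I would estimate the three deterministic terms by the mean value theorem, exactly as in Lemma~\ref{lem:divsum1}. The map $u\mapsto 4u^{1/4}\log u$ has derivative $u^{-3/4}(\log u + 4)$, which is decreasing on $[1,\infty)$, so the difference is at most $(Y-X)X^{-3/4}(\log X + 4)$, giving the first two claimed terms; and since $u\mapsto u^{1/4}$ has decreasing derivative, $Y^{1/4}-X^{1/4}\ge \tfrac{1}{4}(Y-X)Y^{-3/4}$, which together with $2-\gamma>0$ yields $-8(2-\gamma)(Y^{1/4}-X^{1/4})\le -2(2-\gamma)(Y-X)Y^{-3/4}$. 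For the three $\Delta$-terms I would just use $|\Delta(u)|\le\sqrt{u}$: the two boundary contributions are bounded by $Y^{-1/4}$ and $X^{-1/4}$ respectively, while $\tfrac{3}{4}\int_X^Y \sqrt{u}\,u^{-7/4}\,\dif{u} = 3X^{-1/4}-3Y^{-1/4}$, and these three contributions sum to exactly $4X^{-1/4}-2Y^{-1/4}$ — the small coincidence that makes the stated bound clean. (Only $1\le X<Y$ is actually needed for this part.)

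For the first bound I would set $X=1$, use $d(1)=1$ (so that $\sum_{n\le Y}=1+\sum_{1<n\le Y}$) and $\Delta(1)=2-2\gamma$, and substitute into the identity above. The main term is $4Y^{1/4}\log Y$, and after bounding the $\Delta$-terms as before the remainder is at most $-8(2-\gamma)Y^{1/4}+(18-6\gamma)-2Y^{-1/4}$. I would then check that this auxiliary function is decreasing for $Y\ge 1$ — its derivative is $Y^{-5/4}\bigl(\tfrac{1}{2} - 2(2-\gamma)\sqrt{Y}\bigr)<0$ — and that its value at $Y=2$ is negative (about $-0.68$), hence it is $\le 0$ for all $Y\ge 2$, which is precisely the threshold in the statement.

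I do not anticipate a genuine obstacle: the whole argument is Abel summation followed by two mean-value-theorem comparisons and the trivial bound $|\Delta(u)|\le\sqrt{u}$. The steps needing care are (i) bookkeeping of signs and rational exponents when collecting the terms in the identity, (ii) confirming that the two auxiliary functions $u^{-3/4}(\log u+4)$ and $\tfrac{1}{4}u^{-3/4}$ are monotone on the relevant range so that the mean-value estimates point in the right direction, and (iii) the final numerical check at $Y=2$ for the first bound, where the inequality is close to the edge.
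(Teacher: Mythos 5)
Your proposal is correct and follows essentially the same route as the paper: partial summation with Dirichlet's formula to get the exact identity, the mean-value theorem applied to $u^{1/4}\log u$ (decreasing derivative, bounded at $X$) and to $u^{1/4}$ (bounded at $Y$), the trivial bound $\left|\Delta(u)\right|\leq\sqrt{u}$ yielding $4X^{-1/4}-2Y^{-1/4}$, and for the first bound the case $X=1$ with $\Delta(1)=2-2\gamma$, monotonicity of the remainder $-8(2-\gamma)Y^{1/4}+6(3-\gamma)-2Y^{-1/4}$, and its negativity at $Y=2$. All constants and checks match the paper's proof.
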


\begin{proof}
By partial summation and~\eqref{eq:divisorSum} we have
\begin{flalign*}
\sum_{X<n\leq Y} \frac{d(n)}{n^{\frac{3}{4}}} &= \frac{D(Y)}{Y^{\frac{3}{4}}} - \frac{D(X)}{X^{\frac{3}{4}}} + \frac{3}{4}\int_{X}^{Y}\frac{D(u)}{u^{\frac{7}{4}}}\dif{u} \\
&= 4\left(Y^{\frac{1}{4}}\log{Y}-X^{\frac{1}{4}}\log{X}\right) - 8\left(2-\gamma\right)\left(Y^{\frac{1}{4}}-X^{\frac{1}{4}}\right) \\
&+ \frac{\Delta(Y)}{Y^{\frac{3}{4}}} - \frac{\Delta(X)}{X^{\frac{3}{4}}} + \frac{3}{4}\int_{X}^{Y}\frac{\Delta(u)}{u^{\frac{7}{4}}}\dif{u}.
\end{flalign*}
Take $X=1$. Because $\Delta(1)=2-2\gamma$, the above equality and $\left|\Delta(u)\right|\leq\sqrt{u}$ imply
\[
\sum_{n\leq Y} \frac{d(n)}{n^{\frac{3}{4}}} \leq 4Y^{\frac{1}{4}}\log{Y} - 8(2-\gamma)Y^{\frac{1}{4}} + 6(3-\gamma) - 2Y^{-\frac{1}{4}}.
\]
The first bound from Lemma~\ref{lem:divsum2} now follows because the remainder in the last inequality is a decreasing function for $Y\geq 1$ and its value for $Y=2$ is negative. Concerning the second bound, the mean-value theorem implies
\[
Y^{\frac{1}{4}}\log{Y}-X^{\frac{1}{4}}\log{X} \leq \frac{4+\log{X}}{4X^{\frac{3}{4}}}\left(Y-X\right), \quad
Y^{\frac{1}{4}} - X^{\frac{1}{4}} \geq \frac{Y-X}{4Y^{\frac{3}{4}}}.
\]
From these inequalities the second bound from Lemma~\ref{lem:divsum2} easily follows.
\end{proof}

Applying the second inequality from Lemma~\ref{lem:divsum2} for $X=Z-\sqrt{Z}$ and $Y=Z+\sqrt{Z}$ gives
\begin{equation}
\label{eq:summation3}
\sum_{Z-\sqrt{Z}<n\leq Z+\sqrt{Z}} \frac{d(n)}{n^{\frac{3}{4}}} \leq 2^{\frac{7}{4}}\cdot Z^{-\frac{1}{4}}\log{Z} + 12.21\cdot Z^{-\frac{1}{4}}
\end{equation}
for $Z\geq 4$.

\begin{lemma}
\label{lem:divsum3}
Let $X\geq 1$. Then
\[
\sum_{n>X}\frac{d(n)}{n^{\frac{5}{4}}} \leq 4X^{-\frac{1}{4}}\log{X} + 8\left(\gamma+2\right)X^{-\frac{1}{4}} + \frac{8}{3}X^{-\frac{3}{4}}.
\]
\end{lemma}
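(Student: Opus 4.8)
The plan is to run Abel (partial) summation on the tail sum and split it against Dirichlet's formula~\eqref{eq:divisorSum}, exactly as in the proofs of Lemmas~\ref{lem:divsum1} and~\ref{lem:divsum2}. First I would write, for $1\le X<Y$,
\[
\sum_{X<n\le Y}\frac{d(n)}{n^{\frac{5}{4}}} = \frac{D(Y)}{Y^{\frac{5}{4}}} - \frac{D(X)}{X^{\frac{5}{4}}} + \frac{5}{4}\int_{X}^{Y}\frac{D(u)}{u^{\frac{9}{4}}}\dif{u},
\]
and then let $Y\to\infty$. Since $D(u)\ll u\log{u}$, the boundary term $D(Y)Y^{-5/4}$ tends to $0$, so
\[
\sum_{n>X}\frac{d(n)}{n^{\frac{5}{4}}} = -\frac{D(X)}{X^{\frac{5}{4}}} + \frac{5}{4}\int_{X}^{\infty}\frac{D(u)}{u^{\frac{9}{4}}}\dif{u}.
\]

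Next I would insert $D(u)=u\log{u}+(2\gamma-1)u+\Delta(u)$ and use $\left|\Delta(u)\right|\le\sqrt{u}$ for $u\ge1$ (the explicit bound recalled before Lemma~\ref{lem:Voronoi}). The elementary integrals needed are $\int_{X}^{\infty}u^{-5/4}\dif{u}=4X^{-1/4}$, $\int_{X}^{\infty}u^{-5/4}\log{u}\,\dif{u}=4X^{-1/4}\log{X}+16X^{-1/4}$ (one integration by parts), and $\int_{X}^{\infty}u^{-7/4}\dif{u}=\frac{4}{3}X^{-3/4}$, the last of these bounding the $\Delta$-contribution in absolute value. This gives
\[
\frac{5}{4}\int_{X}^{\infty}\frac{D(u)}{u^{\frac{9}{4}}}\dif{u}\le 5X^{-\frac{1}{4}}\log{X} + (10\gamma+15)X^{-\frac{1}{4}} + \frac{5}{3}X^{-\frac{3}{4}}.
\]
For the boundary term I would expand $-D(X)X^{-5/4}=-X^{-1/4}\log{X}-(2\gamma-1)X^{-1/4}-\Delta(X)X^{-5/4}$ and estimate $\left|\Delta(X)X^{-5/4}\right|\le X^{-3/4}$, so that $-D(X)X^{-5/4}\le -X^{-1/4}\log{X}-(2\gamma-1)X^{-1/4}+X^{-3/4}$.

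Adding the last two estimates, the coefficients of $X^{-1/4}\log{X}$ combine to $5-1=4$, those of $X^{-1/4}$ to $10\gamma+15-(2\gamma-1)=8(\gamma+2)$, and those of $X^{-3/4}$ to $\frac{5}{3}+1=\frac{8}{3}$, which is precisely the asserted inequality. There is no genuine obstacle here beyond careful bookkeeping of the constants; the only point worth flagging is that the negative logarithmic term produced by the boundary value $-D(X)X^{-5/4}$ must be retained, since it is exactly what lowers the coefficient of $X^{-1/4}\log{X}$ from $5$ (the integral alone) to the sharp value $4$.
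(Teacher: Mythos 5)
Your proposal is correct and follows exactly the paper's route: partial summation against $D(u)$, letting $Y\to\infty$, inserting $D(u)=u\log u+(2\gamma-1)u+\Delta(u)$ with $|\Delta(u)|\le\sqrt{u}$, and evaluating the elementary integrals; the paper merely states that this "easily implies" the bound, while you have carried out the bookkeeping, and your constants ($5-1=4$, $10\gamma+15-(2\gamma-1)=8(\gamma+2)$, $\tfrac{5}{3}+1=\tfrac{8}{3}$) check out.
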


\begin{proof}
By partial summation we have
\[
\sum_{X<n\leq Y}\frac{d(n)}{n^{\frac{5}{4}}} = \frac{D(Y)}{Y^{\frac{5}{4}}} - \frac{D(X)}{X^{\frac{5}{4}}} + \frac{5}{4}\int_{X}^{Y}\frac{D(u)}{u^{\frac{9}{4}}}\dif{u}
\]
for $Y>X$. By~\eqref{eq:divisorSum} and $\left|\Delta(u)\right|\leq\sqrt{u}$ we can take $Y\to\infty$ in the above equality, which then easily implies the stated inequality from Lemma~\ref{lem:divsum3}.
\end{proof}

We are ready to estimate the remaining terms. Remembering that $\mathcal{Z}(T,N)$ is defined by~\eqref{eq:AtkZ}, it can be rewritten as
\begin{equation*}
\mathcal{Z}(T,N) = \frac{T}{2\pi} + \frac{1}{2}\left(N+\frac{1}{2}\right) - \sqrt{\frac{1}{4}\left(N+\frac{1}{2}\right)^2+\frac{T}{2\pi}\left(N+\frac{1}{2}\right)}.
\end{equation*}
For $x>0$ define
\begin{equation}
\label{eq:nu}
\nu(x) \de 1 + \sqrt{1 + \frac{2}{\pi x}}.
\end{equation}
By~\eqref{eq:BoundOnN} we have
\begin{equation}
\label{eq:lowboundZ}
\frac{T}{2\pi}-\mathcal{Z}(T,N) = \frac{\frac{T}{2\pi}\left(N+\frac{1}{2}\right)}{\sqrt{\frac{1}{4}\left(N+\frac{1}{2}\right)^2+\frac{T}{2\pi}\left(N+\frac{1}{2}\right)}+\frac{1}{2}\left(N+\frac{1}{2}\right)} \geq \frac{T}{\pi\nu\left(A'\right)}
\end{equation}
and
\begin{equation}
\label{eq:ToverZ}
\frac{T}{2\pi \mathcal{Z}(T,N)} \geq 1 + A'\pi\nu\left(A'\right)
\end{equation}
since
\begin{equation}
\label{eq:boundsZ}
\frac{T}{\pi^2\left(A''+\frac{1}{2T_0}\right)\nu\left(A''\right)^2} \leq \mathcal{Z}(T,N) \leq \frac{T}{A'\pi^2\nu\left(A'\right)^2}.
\end{equation}
It follows that if $n\leq \mathcal{Z}(T,N)$, then $n<T/(2\pi)$ and
\[
\left(\frac{T}{2\pi}-n\right)^2 \geq \left(\frac{T}{2\pi}-\mathcal{Z}(T,N)\right)^2 = \left(N+\frac{1}{2}\right)\mathcal{Z}(T,N).
\]
But if $n>\mathcal{Z}(T,N)$, then $n>T/(2\pi)$ or
\[
\left(\frac{T}{2\pi}-n\right)^2 < \left(\frac{T}{2\pi}-\mathcal{Z}(T,N)\right)^2 = \left(N+\frac{1}{2}\right)\mathcal{Z}(T,N).
\]
We are doing this in accordance with~\eqref{eq:SecondIntegralCond} for $a=\sqrt{N+1/2}$. Therefore, we will split summation into two parts, $n\leq\mathcal{Z}(T,N)$ and $n>\mathcal{Z}(T,N)$, in order to use Proposition~\ref{prop:SecondIntegral}. Let
\[
\widehat{\alpha}\left(A'\right) \de \min\left\{\frac{1}{\left(1+\frac{2\mathcal{A}_3\left(\sqrt{A'}\right)}{3\pi}\right)\sqrt{2}},\frac{\mathcal{A}_3\left(\sqrt{A'}\right)\left(1+\frac{2}{\pi}\right)}{2},\left(\frac{48\mathcal{A}_3\left(\sqrt{A'}\right)^3}{\pi^3}\right)^{\frac{1}{6}}\right\},
\]
where $\mathcal{A}_3(u)$ is defined in~\eqref{eq:calA}. We obtain
\[
\mathcal{E}_{411} = -\Sigma_2\left(T,N\right) + \mathcal{O}_{42},
\]
where $\Sigma_2\left(T,N\right)$ is defined by~\eqref{eq:AtkSigma2}, and
\begin{flalign}
\left|\mathcal{O}_{42}\right| &\leq \frac{\mathcal{V}_{11}\left(A',A'',T_0,\alpha_0\right)}{\pi} T^{\frac{1}{4}}\sum_{n=1}^{\infty}\frac{d(n)}{n^{\frac{3}{4}}}\min\left\{1,\left|2\sqrt{n}-2\sqrt{\mathcal{Z}(T,N)}\right|^{-1}\right\} \nonumber \\
&+\frac{\mathcal{V}_{12}\left(A',\alpha_0\right)}{2\pi} T^{-\frac{1}{2}}\sum_{n\leq\mathcal{Z}(T,N)}\frac{d(n)}{\sqrt{n}}\left(\frac{T}{2\pi}-n\right)^{-\frac{1}{2}} \nonumber \\
&+ \frac{\zeta^2\left(\frac{5}{4}\right)\mathcal{V}_{13}\left(A',\alpha_0\right)}{\pi} T^{\frac{1}{4}}e^{-\mathcal{B}_2\left(\sqrt{A'},\alpha_0\right)\left(T+\sqrt{A'T}\right)}. \label{eq:O42}
\end{flalign}
Moreover,
\begin{flalign}
\left|\mathcal{E}_{412}\right| &\leq \frac{3}{16\pi}\sum_{n\leq\mathcal{Z}(T,N)}\frac{d(n)}{\sqrt{n}}\left(\log{\frac{T}{2\pi n}}\right)^{-1}\left(\frac{T}{2\pi}-n\right)^{-1} \nonumber \\
&+ \frac{3\mathcal{V}_{22}\left(A',\alpha_0\right)}{64\pi^2}T^{-\frac{1}{2}}\sum_{n\leq\mathcal{Z}(T,N)}\frac{d(n)}{\sqrt{n}}\left(\frac{T}{2\pi}-n\right)^{-\frac{3}{2}} \nonumber \\
&+ \frac{3}{32\pi^2}\left(\mathcal{V}_{21}\left(A',A'',T_0,\alpha_0\right)\zeta^2\left(\frac{5}{4}\right)+\mathcal{V}_{23}\left(A',\alpha_0\right)\zeta^2\left(\frac{7}{4}\right)\right)T^{-\frac{1}{4}}, \label{eq:CalE412}
\end{flalign}
where $0<\alpha_0<\widehat{\alpha}\left(A'\right)$,
\begin{gather*}
\mathcal{V}_{11}\left(A',A'',T_0,\alpha_0\right) \de \mathcal{V}_1\left(\frac{3}{2},\alpha_0,\sqrt{A'},\sqrt{A''+\frac{1}{2T_0}}\right), \\
\mathcal{V}_{12}\left(A',\alpha_0\right) \de \mathcal{V}_2\left(\frac{3}{2},\alpha_0,\sqrt{A'}\right), \quad \mathcal{V}_{13}\left(A',\alpha_0\right) \de \mathcal{V}_3\left(\frac{3}{2},\alpha_0,\sqrt{A'}\right),
\end{gather*}
and
\begin{gather*}
\mathcal{V}_{21}\left(A',A'',T_0,\alpha_0\right) \de \mathcal{V}_1\left(\frac{5}{2},\alpha_0,\sqrt{A'},\sqrt{A''+\frac{1}{2T_0}}\right), \\
\mathcal{V}_{22}\left(A',\alpha_0\right) \de \mathcal{V}_2\left(\frac{5}{2},\alpha_0,\sqrt{A'}\right), \quad \mathcal{V}_{23}\left(A',\alpha_0\right) \de \mathcal{V}_3\left(\frac{5}{2},\alpha_0,\sqrt{A'}\right).
\end{gather*}
Also, the function $\mathcal{B}_2$ is defined by~\eqref{eq:CalB}, and the functions $\mathcal{V}_1$, $\mathcal{V}_2$ and $\mathcal{V}_3$ are defined by~\eqref{eq:V1},~\eqref{eq:V2} and~\eqref{eq:V3}, respectively.

Firstly, we are going to estimate each term in~\eqref{eq:CalE412}. Let $\mu>0$ and observe that $\mathcal{Z}(T,N)\geq 5$ by~\eqref{eq:TCond} and~\eqref{eq:boundsZ}. By Lemma~\ref{lem:divsum1}, and~\eqref{eq:lowboundZ} and~\eqref{eq:boundsZ}, we have
\begin{equation}
\label{eq:divisorMu}
\sum_{n\leq\mathcal{Z}(T,N)}\frac{d(n)}{\sqrt{n}}\left(\frac{T}{2\pi}-n\right)^{-\mu} \leq \frac{2}{\sqrt{A'}}\left(\pi\nu\left(A'\right)\right)^{\mu-1}T^{\frac{1}{2}-\mu}\log{\frac{T}{A'\pi^2\nu\left(A'\right)^2}}.
\end{equation}
Taking $\mu\in\{1,3/2\}$ in~\eqref{eq:divisorMu}, together with~\eqref{eq:ToverZ}, estimate~\eqref{eq:CalE412} assures that
\begin{multline*}
%\label{eq:CalE412Final}
\left|\mathcal{E}_{412}\right| \leq E_{46}\left(A',A'',T_0,\alpha_0\right)T^{-\frac{1}{4}} \\
+ \left(E_{47}\left(A'\right)T^{-\frac{1}{2}}+E_{48}\left(A',\alpha_0\right)T^{-\frac{3}{2}}\right)\log{\frac{T}{A'\pi^2\nu\left(A'\right)^2}},
\end{multline*}
where
\begin{gather*}
E_{46}\left(A',A'',T_0,\alpha_0\right) \de \frac{3}{32\pi^2}\left(\mathcal{V}_{21}\left(A',A'',T_0,\alpha_0\right)\zeta^2\left(\frac{5}{4}\right)
+\mathcal{V}_{23}\left(A',\alpha_0\right)\zeta^2\left(\frac{7}{4}\right)\right), %\label{eq:E46}
\\
E_{47}\left(A'\right)\de \frac{3}{8\pi\sqrt{A'}\log{\left(1+A'\pi\nu\left(A'\right)\right)}}, %\label{eq:E47}
\\
E_{48}\left(A',\alpha_0\right)\de \frac{3\mathcal{V}_{22}\left(A',\alpha_0\right)}{32\pi^2}\sqrt{\frac{\pi}{A'}\nu\left(A'\right)}. %\label{eq:E48}
\end{gather*}

We need to estimate the right-hand side of~\eqref{eq:O42}. Write $\mathcal{Z}=\mathcal{Z}(T,N)$ and let
\begin{flalign*}
\mathcal{S} &\de \sum_{n=1}^{\infty}\frac{d(n)}{n^{\frac{3}{4}}}\min\left\{1,\left|2\sqrt{n}-2\sqrt{\mathcal{Z}}\right|^{-1}\right\} \\
&=\left(\sum_{n\leq \mathcal{Z}/2}+\sum_{\mathcal{Z}/2<n\leq \mathcal{Z}-\sqrt{\mathcal{Z}}}+\sum_{\mathcal{Z}-\sqrt{\mathcal{Z}}<n\leq \mathcal{Z}+\sqrt{\mathcal{Z}}}+\sum_{\mathcal{Z}+\sqrt{\mathcal{Z}}<n\leq 2\mathcal{Z}}+\sum_{n>2\mathcal{Z}}\right) \times \\
&\times\frac{d(n)}{n^{\frac{3}{4}}}\min\left\{1,\left|2\sqrt{n}-2\sqrt{\mathcal{Z}}\right|^{-1}\right\}.
\end{flalign*}
Denote by $\mathcal{S}_1,\ldots,\mathcal{S}_5$ the above sums. Because
\[
\left|2\sqrt{n}-2\sqrt{\mathcal{Z}}\right| \geq 2\left(1-\frac{1}{\sqrt{2}}\right)\sqrt{\mathcal{Z}}
\]
for $n\leq \mathcal{Z}/2$, we have
\[
\mathcal{S}_1 \leq 2^{\frac{3}{4}}\left(1-\frac{1}{\sqrt{2}}\right)^{-1}\mathcal{Z}^{-\frac{1}{4}}\log{\frac{\mathcal{Z}}{2}}
\]
by Lemma~\ref{lem:divsum2}. Because
\[
\left|2\sqrt{n}-2\sqrt{\mathcal{Z}}\right|^{-1} = \frac{\sqrt{\mathcal{Z}}+\sqrt{n}}{2\left(\mathcal{Z}-n\right)} \leq \frac{\sqrt{\mathcal{Z}}}{\mathcal{Z}-n}
\]
and $n^{-3/4}<2^{3/4}\mathcal{Z}^{-3/4}$ for $\mathcal{Z}/2<n\leq \mathcal{Z}-\sqrt{\mathcal{Z}}$, we have
\[
\mathcal{S}_2 \leq \left(2\mathcal{Z}\right)^{-\frac{1}{4}}\log^2{\mathcal{Z}} - 2^{\frac{3}{4}}\left(\log{2}-\gamma\right)\mathcal{Z}^{-\frac{1}{4}}\log{\mathcal{Z}} + 2.365\cdot2^{\frac{3}{4}}\mathcal{Z}^{-\frac{1}{4}}
\]
by inequality~\eqref{eq:summation1} for $Z=\mathcal{Z}$. An upper bound for $\mathcal{S}_3$ is given by the right-hand side of~\eqref{eq:summation3} for $Z=\mathcal{Z}$. Because
\[
\left|2\sqrt{n}-2\sqrt{\mathcal{Z}}\right|^{-1} = \frac{\sqrt{\mathcal{Z}}+\sqrt{n}}{2\left(n-\mathcal{Z}\right)} \leq \frac{\left(1+\sqrt{2}\right)\sqrt{\mathcal{Z}}}{2\left(n-\mathcal{Z}\right)}
\]
and $n^{-3/4}<\mathcal{Z}^{-3/4}$ for $\mathcal{Z}+\sqrt{\mathcal{Z}}<n\leq 2\mathcal{Z}$, we have
\[
\mathcal{S}_4 \leq \frac{1+\sqrt{2}}{4}\mathcal{Z}^{-\frac{1}{4}}\log^2{\mathcal{Z}} + \frac{\gamma\left(1+\sqrt{2}\right)}{2}\mathcal{Z}^{-\frac{1}{4}}\log{\mathcal{Z}} + \frac{3.721\left(1+\sqrt{2}\right)}{2}\mathcal{Z}^{-\frac{1}{4}}
\]
by inequality~\eqref{eq:summation2} for $Z=\mathcal{Z}$. Because
\[
\left|2\sqrt{n}-2\sqrt{\mathcal{Z}}\right| \geq 2\left(1-\frac{1}{\sqrt{2}}\right)\sqrt{n}
\]
for $n>2\mathcal{Z}$, we have
\[
\mathcal{S}_5 \leq 2^{\frac{3}{4}}\left(1-\frac{1}{\sqrt{2}}\right)^{-1}\mathcal{Z}^{-\frac{1}{4}}\log{\left(2\mathcal{Z}\right)} + \left(1-\frac{1}{\sqrt{2}}\right)^{-1}\left(2^{\frac{7}{4}}\left(\gamma+2\right)+\frac{2^{\frac{5}{4}}}{3}\mathcal{Z}^{-\frac{1}{2}}\right)\mathcal{Z}^{-\frac{1}{4}}
\]
by Lemma~\ref{lem:divsum3} for $X=2\mathcal{Z}$. All together gives us
\[
\mathcal{S} \leq 1.445\cdot\mathcal{Z}^{-\frac{1}{4}}\log^2{\mathcal{Z}} + 15.35\cdot\mathcal{Z}^{-\frac{1}{4}}\log{\mathcal{Z}} + 50.276\cdot\mathcal{Z}^{-\frac{1}{4}} + 2.71\cdot\mathcal{Z}^{-\frac{3}{4}}.
\]
This inequality and~\eqref{eq:divisorMu} for $\mu=1/2$ finally implies
\begin{flalign}
\left|\mathcal{O}_{42}\right| &\leq
E_{49}\cdot 1.445\log^{2}{\frac{T}{A'\pi^2\nu\left(A'\right)^2}} + E_{49}\cdot 15.35\log{\frac{T}{A'\pi^2\nu\left(A'\right)^2}} + E_{410} \nonumber \\
&+ E_{411}T^{-\frac{1}{2}}\log{\frac{T}{A'\pi^2\nu\left(A'\right)^2}} + E_{412}T^{-\frac{1}{2}}, \label{eq:CalO42}
\end{flalign}
where
\begin{equation*}
%\label{eq:E49}
E_{49}\left(A',A'',T_0,\alpha_0\right) \de \frac{\mathcal{V}_{11}\left(A',A'',T_0,\alpha_0\right)}{\sqrt{\pi}} \left(A''+\frac{1}{2T_0}\right)^{\frac{1}{4}}\sqrt{\nu\left(A''\right)},
\end{equation*}
\begin{multline}
\label{eq:E410}
E_{410}\left(A',A'',T_0,T,\alpha_0\right) \de 50.276\cdot E_{49} \left(A',A'',T_0,\alpha_0\right) \\
+ \frac{\zeta^2\left(\frac{5}{4}\right)\mathcal{V}_{13}\left(A',\alpha_0\right)}{\pi} T^{\frac{1}{4}}e^{-\mathcal{B}_2\left(\sqrt{A'},\alpha_0\right)\left(T+\sqrt{A'T}\right)},
\end{multline}
\begin{equation}
\label{eq:E411}
E_{411}\left(A',\alpha_0\right) \de \frac{\mathcal{V}_{12}\left(A',\alpha_0\right)}{\pi\sqrt{\pi A'\nu\left(A'\right)}},
\end{equation}
\begin{equation*}
%\label{eq:E412}
E_{412}\left(A',A'',T_0,\alpha_0\right) \de 2.71\pi\sqrt{A''+\frac{1}{2T_0}}\nu\left(A''\right) E_{49}\left(A',A'',T_0,\alpha_0\right).
\end{equation*}
We are now in position to provide the proof of our main result.

\begin{proof}[Proof of Theorem~\ref{thm:OurAtkinson}]
In order to obtain the values from Table~\ref{tab:thm}, we collect all terms from the above expressions for $\mathcal{E}_{j}$ that are distinct from $\Sigma_1$ and $\Sigma_2$, together with $R(T)$ from~\eqref{eq:RStirling}, and split them into three groups: terms which are asymptotically $\log^2{T}$, $\log{T}$, and other terms. Let us suppose that after collection we obtain
\[
b_1\left(A',A'',T_0,\alpha_0\right)\log^2{T} + b_2\left(A',A'',T_0,\alpha_0\right)\log{T} + b_3\left(A',A'',T_0,T,\alpha_0\right).
\]
Then the function $b_1$ arises from the first term of~\eqref{eq:CalO42} and is equal to
\[
b_1\left( A', A'', T_0, \alpha_0\right) := 1.445\cdot E_{49}\left( A', A'', T_0, \alpha_0\right),
\]
while $b_2$ arises from the first two terms of~\eqref{eq:CalO42}, therefore
\begin{flalign*}
b_2\left( A', A'', T_0, \alpha_0\right) \de &- 2.89\cdot E_{49}\left( A', A'', T_0, \alpha_0\right) \log\left({A'\pi^2\nu\left(A'\right)^2} \right) \\
&+ 15.35\cdot E_{49}\left( A', A'', T_0, \alpha_0\right).
\end{flalign*}
Remembering that
\[
\mathcal{E}_4 = -\Sigma_2 + \mathcal{O}_{42} + \mathcal{E}_{412} + \mathcal{E}_{413} + \left(\mathcal{E}_{42}+\mathcal{E}_{43}+\mathcal{E}_{414}\right),
\]
the function $b_3\left( A', A'', T_0, T, \alpha_0\right)$ is the sum of all remaining terms from the estimations for $\mathcal{E}_1$, $\mathcal{E}_2$ and $\mathcal{E}_3$, i.e.,~\eqref{eq:CalO12},~\eqref{eq:CalE2} and~\eqref{eq:CalE3}, for $\mathcal{E}_4$, i.e.,~\eqref{eq:CalE413},~\eqref{eq:CalE42E43E414},~\eqref{eq:CalE412} and~\eqref{eq:CalO42}, and for $R(T)$ from~\eqref{eq:RStirling}.

We are interested in three cases for possible pairs of $A'$ and $A''$, namely $\left(A',A''\right)=(0.9,1.1)$, $\left(A',A''\right)=(0.5,1.5)$, $\left(A',A''\right)=(0.1,1.9)$. Let $T\geq T_0\geq 2 \cdot 10^{11}$, assertion which clearly satisfies condition~\eqref{eq:TCond}. Then for fixed $\alpha_0 > 0$ all terms of $b_3$ but one decrease in the variable $T$. The behaviour of this one term $E_{410}(A',A'',T_0,T,\alpha_0)$, which is given by~\eqref{eq:E410}, depends on the parameter $\alpha_0$.

For each $A'\in\{0.9,0.5,0.1\}$ we choose $\alpha_0$ in the range determined by~\eqref{eq:alpha0} to be close to the minimum of function $E_{411}\left(A',\alpha_0\right)$ which is given by~\eqref{eq:E411}. Then we check in each case that for the corresponding $\alpha_0$ the function $E_{410}(A',A'',T_0,T,\alpha_0)$ decreases in $T$. With such procedure we establish the upper bound $a_3(A', A'', T_0)$ for $b_3(A',A'',T_0,T,\alpha_0)$. Afterwards, upper bounds $a_1(A',A'',T_0)$ and $a_2(A',A'',T_0)$ for $b_1(A',A'',T_0, \alpha_0)$ and $b_2(A',A'',T_0,\alpha_0)$, respectively, can be deduced.

Table~\ref{tab:thm} provides values for $a_1(A',A'',T_0)$, $a_2(A',A'',T_0)$, $a_3(A',A'',T_0)$, where $A'$ and $A''$ are as above, and $T_0\in\left\{10^{20},10^{30},\ldots,10^{100},10^{1000}\right\}$. We also listed values for $\alpha_0$ in each case. All computations are done in \emph{Mathematica}.
\end{proof}

\section{Proof of Corollary~\ref{cor:main}}
\label{sec:Coro}

Before proceeding to the proof of Corollary~\ref{cor:main}, we will provide an explicit version of Ingham's estimate~\eqref{eq:InghamE}. Define
\[
\widehat{e}(x) \de (1+x)^{-\frac{1}{4}}\sqrt{x}\left(\arsinh{\sqrt{x}}\right)^{-1}.
\]
Then $e(T,n)=\widehat{e}\left(\pi n/(2T)\right)$. Because $0<\widehat{e}(x)<1$ for $x\in(0,50]$, we have $\left|e(T,n)\right|<1$ for $n\leq T$. Theorem~\ref{thm:OurAtkinson} for $N=\lfloor{T}\rfloor$, $A'=0.9$, $A''=1.1$ and $T_0=10^{30}$ gives
\begin{flalign}
\left|E(T)\right| &\leq \left|\Sigma_1\left(T,\lfloor T\rfloor\right)\right| + \left|\Sigma_2\left(T,\lfloor T\rfloor\right)\right| + \left|\mathfrak{E}(T)\right| \nonumber \\
&\leq 3.87\sqrt{T}\log{T} - 1.1\sqrt{T} + 95\log^2{T} + 275\log{T} + 7.3\cdot10^{18} \nonumber \\
&\leq 110\sqrt{T}\log{T} \label{eq:explIngham}
\end{flalign}
for $T\geq 10^{30}$. Here we estimate the first two terms in Atkinson's formula by Lemmas~\ref{lem:divsum1} and~\ref{lem:divsum2}, while using also inequalities~\eqref{eq:ToverZ} and~\eqref{eq:boundsZ}.

Let
\begin{equation}
\label{eq:JutilaCond}
2\leq t_1\leq T\leq t_2\leq \frac{3}{2}T.
\end{equation}
Because $I_{\frac{1}{2}}(t)$ is an increasing function, we obtain from~\eqref{eq:JutilaCond} and~\eqref{eq:Littlewood} that
\begin{equation}
\label{eq:BoundsOnE}
E\left(t_1\right) - \left(T-t_1\right)\log{T} \leq E(T) \leq E\left(t_2\right) + \left(t_2-T\right)\log{T}
\end{equation}
holds. Let $Y\de T^{\frac{1}{3}}\log^{\mu_1}{T}$ and $G\de T^{-\frac{1}{6}}\log^{\mu_2}{T}$ for certain real numbers $\mu_1$ and $\mu_2$, yet to be determined. Let us assume that $1-T^{-\frac{2}{3}}\log^{\mu_1}{T}>0$, which is clearly true for large $T$. Define
\[
t_1 \de \left(\sqrt{T-Y}+u\right)^{2}, \quad
t_2 \de \left(\sqrt{T+Y}+u\right)^{2},
\]
where $|u|\leq H\de G\log{T}$. We would like~\eqref{eq:JutilaCond} to be valid for large $T$. This will be true if
\begin{gather}
\sqrt{1-T^{-\frac{2}{3}}\log^{\mu_1}{T}}+T^{-\frac{2}{3}}\log^{\mu_2+1}{T} \leq 1, \label{eq:Jutcond1} \\ \sqrt{1+T^{-\frac{2}{3}}\log^{\mu_1}{T}}-T^{-\frac{2}{3}}\log^{\mu_2+1}{T} \geq 1 \label{eq:Jutcond2}
\end{gather}
is satisfied for large $T$. By derivative analysis we can see that this happens if $\mu_1-\mu_2>1$. Take $\varepsilon>0$ and put $\mu_2=\mu_1-1-2\varepsilon$, thus satisfying the previous condition. Then~\eqref{eq:BoundsOnE} implies
\begin{gather}
E\left(t_1\right) - \left(1+\frac{2}{\log^{2\varepsilon}{T}}\right)T^{\frac{1}{3}}\log^{\mu_1+1}{T} \leq E(T), \label{eq:MainBoundOnE1} \\
E(T) \leq E\left(t_2\right) + \left(1+\frac{2\sqrt{1+T^{-\frac{2}{3}}\log^{\mu_1}{T}}}{\log^{2\varepsilon}{T}}+\frac{\log^{\mu_1-4\varepsilon}{T}}{T^{\frac{2}{3}}}\right)T^{\frac{1}{3}}\log^{\mu_1+1}{T}. \label{eq:MainBoundOnE2}
\end{gather}
Define
\[
E_1(x) \de G^{-1}\int_{-H}^{H}E\left((x+u)^2\right)e^{-\left(u/G\right)^2}\dif{u}.
\]
The main idea is to apply the above integral to inequalities~\eqref{eq:MainBoundOnE1} and~\eqref{eq:MainBoundOnE2}, for which we need some good estimate on $E_1(x)$. We will show (see Lemma~\ref{lem:Jutila}) that Lemma 15.4 in~\cite{Ivic} is also true for
\begin{equation}
\label{eq:JutilaM}
M\de G^{-2}\log^{1+2\varepsilon}{T}=T^{\frac{1}{3}}\log^{2(1+2\varepsilon)}{T},
\end{equation}
thus implying
\begin{equation}
\label{eq:JutilaE1}
E_1(x) \ll T^{\frac{1}{3}}\left(\log{T}\right)^{\frac{-2\mu_1+3(1+2\varepsilon)}{4}+1}
\end{equation}
for $\sqrt{T-Y}\leq x\leq \sqrt{T+Y}$. Assume that inequalities~\eqref{eq:Jutcond1} and~\eqref{eq:Jutcond2}, together with
\begin{gather}
\sqrt{1-T^{-\frac{2}{3}}\log^{\mu_1}{T}}-T^{-\frac{2}{3}}\log^{\mu_2+1}{T} \geq \sqrt{\frac{2}{T}}, \label{eq:Jutcon3} \\
\sqrt{1+T^{-\frac{2}{3}}\log^{\mu_1}{T}}+T^{-\frac{2}{3}}\log^{\mu_2+1}{T} \leq \sqrt{\frac{3}{2}}, \label{eq:Jutcon4}
\end{gather}
are satisfied for $T\geq 10^{30}$. After multiplying both sides of~\eqref{eq:MainBoundOnE1} and~\eqref{eq:MainBoundOnE2} by $G e^{-(u/G)^2}$ and integrating them over $u$ from $-H$ to $H$, we get
\begin{flalign}
\left|E(T)\right| &\leq \frac{1}{\sqrt{\pi}}\max\left\{\left|E_1\left(\sqrt{T-Y}\right)\right|,\left|E_1\left(\sqrt{T+Y}\right)\right|\right\} \nonumber \\
&+ \left(1+\frac{2\sqrt{1+T^{-\frac{2}{3}}\log^{\mu_1}{T}}}{\log^{2\varepsilon}{T}}+\frac{\log^{\mu_1-4\varepsilon}{T}}{T^{\frac{2}{3}}}\right)T^{\frac{1}{3}}\log^{\mu_1+1}{T} \nonumber \\
&+ \left(\left(1+\frac{2\sqrt{1+T^{-\frac{2}{3}}\log^{\mu_1}{T}}}{\log^{2\varepsilon}{T}}+\frac{\log^{\mu_1-4\varepsilon}{T}}{T^{\frac{2}{3}}}\right)\frac{\log^{\mu_1}{T}}{T^{\frac{1}{6}}}+110\right)\frac{\sqrt{T/\pi}}{T^{\log{T}}}, \label{eq:E}
\end{flalign}
where we used~\eqref{eq:explIngham} and
\begin{equation}
\label{eq:integral}
\int_{-\infty}^{-H}e^{-\left(u/G\right)^2}\dif{u} = \int_{H}^{\infty}e^{-\left(u/G\right)^2}\dif{u} \leq \frac{G}{2\log{T}}T^{-\log{T}}.
\end{equation}
Therefore, the optimal value for $\mu_1$ is determined by the equation
\[
\frac{-2\mu_1+3(1+2\varepsilon)}{4}+1 = \mu_1+1,
\]
which has a solution $\mu_1=1/2+\varepsilon$. Therefore, having an explicit version of~\eqref{eq:JutilaE1}, which is contained in Lemma~\ref{lem:Jutila}, will produce an explicit version of estimate~\eqref{eq:ourJutila}.

\begin{lemma}
\label{lem:Jutila}
Let $\varepsilon\in(0,1/2]$, $T\geq 1.1T_0$ and $T_0\geq 10^{30}$. Additionally, let $\mathfrak{a}_{n}\left(T_0\right)\de a_{n}\left(0.9,1.1,T_0\right)$ for $n\in\{1,2,3\}$, where $a_1$, $a_2$ and $a_3$ are from Theorem~\ref{thm:OurAtkinson}. Then
\[
\left|E_1(x)\right| \leq \mathfrak{a}\left(T_0\right)T^{\frac{1}{3}}\log^{\frac{3}{2}+\varepsilon}{T}
+ 6.4\cdot T^{\frac{1}{2}-6.28\log^{2\varepsilon}{T}}\log{T}
\]
for
\[
\sqrt{T-T^{\frac{1}{3}}\log^{\frac{1}{2}+\varepsilon}{T}}\leq x\leq \sqrt{T+T^{\frac{1}{3}}\log^{\frac{1}{2}+\varepsilon}{T}},
\]
where
\begin{equation}
\label{eq:fraka}
\mathfrak{a}\left(T_0\right) \de 2.111+\frac{25.4\log{\log{T_0}}}{\log{T_0}}+\frac{\left|\mathfrak{a}_1\right|\sqrt{\pi\log{T_0}}}{T_0^{\frac{1}{3}}}+\frac{\left|\mathfrak{a}_2\right|\sqrt{\pi}}{T_0^{\frac{1}{3}}\sqrt{\log{T_0}}} + \frac{\left|\mathfrak{a}_3\right|\sqrt{\pi}+6.741}{T_0^{\frac{1}{3}}\log^{\frac{3}{2}}{T_0}}.
\end{equation}
\end{lemma}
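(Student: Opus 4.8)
The plan is to substitute Atkinson's formula (Theorem~\ref{thm:OurAtkinson}), taken with $A'=0.9$, $A''=1.1$ and $N=\fl{T}$, into the integral defining $E_1(x)$ and then to use the Gaussian weight $e^{-(u/G)^2}$ as a smoothing and truncating device, in the manner of Heath--Brown and Jutila. First I would record that for $x$ in the stated range and $|u|\le H$ one has $t\de(x+u)^2=T\bigl(1+O\bigl(T^{-2/3}\log^{\mu_1}T\bigr)\bigr)$ with $\mu_1=\tfrac12+\varepsilon$; in particular $t\ge T_0$ and $0.9\,t\le N\le1.1\,t$ whenever $T\ge1.1T_0$, so Theorem~\ref{thm:OurAtkinson} gives $E(t)=\Sigma_1(t,N)+\Sigma_2(t,N)+\mathfrak{E}(t)$ with $\bigl|\mathfrak{E}(t)\bigr|\le\mathfrak{a}_1\log^2 t+\mathfrak{a}_2\log t+\mathfrak{a}_3$ and $\log t\le\log T+o(1)$. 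Writing $E_1(x)=E_1^{(1)}(x)+E_1^{(2)}(x)+E_1^{(3)}(x)$ for the contributions of $\Sigma_1$, $\Sigma_2$ and $\mathfrak{E}$ respectively, I estimate the three pieces in turn.

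The piece $E_1^{(3)}$ is immediate: since $\mathfrak{a}_1,\mathfrak{a}_2,\mathfrak{a}_3>0$ and $G^{-1}\int_{-H}^{H}e^{-(u/G)^2}\dif u\le G^{-1}\int_{\R}e^{-(u/G)^2}\dif u=\sqrt\pi$, one gets $\bigl|E_1^{(3)}(x)\bigr|\le\sqrt\pi\bigl(\mathfrak{a}_1\log^2 T+\mathfrak{a}_2\log T+\mathfrak{a}_3\bigr)+o(1)$, and each term is rewritten as a multiple of $T^{1/3}\log^{3/2+\varepsilon}T$ using that $\log^{1/2-\varepsilon}T/T^{1/3}$, $T^{-1/3}\log^{-1/2-\varepsilon}T$ and $T^{-1/3}\log^{-3/2-\varepsilon}T$ are decreasing for $T\ge T_0\ge10^{28}$; this produces exactly the last three summands of~\eqref{eq:fraka}. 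The piece $E_1^{(2)}$ is treated like $E_1^{(1)}$ below but is far smaller, because the $u$-derivative of the phase $g(t,n)$ at $u=0$ equals $2x\log\bigl(t/(2\pi n)\bigr)\gg\sqrt T$ for all $n\le\mathcal{Z}(t,N)$ — here one uses that $\mathcal{Z}(t,N)$ stays bounded away from $t/(2\pi)$, as in~\eqref{eq:ToverZ} — so the Gaussian average contributes a factor $\ll\exp\bigl(-c\,T^{2/3}/\log^{1+2\varepsilon}T\bigr)$, and since $\Sigma_2(t,N)\ll\sqrt T\log T$ trivially, $E_1^{(2)}$ is negligible.

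The main step is $E_1^{(1)}$. After pulling out the slowly varying factor $\sqrt2\bigl(t/(2\pi)\bigr)^{1/4}=(1+o(1))\sqrt2\bigl(T/(2\pi)\bigr)^{1/4}$ and interchanging sum and integral, the core object is $G^{-1}\int_{-H}^{H}\cos\bigl(f(t,n)\bigr)e^{-(u/G)^2}\dif u$. Using the identity $\tfrac{\partial}{\partial t}f(t,n)=2\arsinh\sqrt{\pi n/(2t)}$, the $u$-derivative of the phase at $u=0$ equals $\beta_n\de4x\arsinh\bigl(\sqrt{\pi n}/(\sqrt2\,x)\bigr)$, which is increasing in $n$, satisfies $0<\beta_n<2\sqrt{2\pi n}$, and equals $2\sqrt{2\pi M}\,(1+o(1))$ at $n=M\de T^{1/3}\log^{2(1+2\varepsilon)}T=G^{-2}\log^{1+2\varepsilon}T$; furthermore all higher $u$-derivatives of the phase are $\ll(n/T)^{3/2}$, so on $[-H,H]$ the phase agrees with its linear part up to $o(1)$ for every $n\le M$. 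Extending the integration to $\R$ (the tails being $\ll T^{-\log T}$ by~\eqref{eq:integral}) and using $|e(t,n)|<1$ gives $\bigl|G^{-1}\int_{-H}^{H}\cos(f)e^{-(u/G)^2}\dif u\bigr|\le\sqrt\pi\,e^{-\beta_n^2 G^2/4}+(\text{negligible})$. For $n\le M$ I bound $e^{-\beta_n^2 G^2/4}\le1$ and use Lemma~\ref{lem:divsum2}, so that $\sum_{n\le M}d(n)n^{-3/4}\le4M^{1/4}\log M=\tfrac43 T^{1/12}\log^{3/2+\varepsilon}T\bigl(1+6(1+2\varepsilon)\tfrac{\log\log T}{\log T}\bigr)$; multiplied by the prefactor $\sqrt2\bigl(T/(2\pi)\bigr)^{1/4}\sqrt\pi=(2\pi)^{1/4}T^{1/4}$ this yields the constants $(2\pi)^{1/4}\cdot\tfrac43\le2.111$ and $(2\pi)^{1/4}\cdot8(1+2\varepsilon)\le25.4$ (for $\varepsilon\le\tfrac12$) in~\eqref{eq:fraka}, after invoking $\tfrac{\log\log T}{\log T}\le\tfrac{\log\log T_0}{\log T_0}$. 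For $n>M$ I use $\beta_n\ge\beta_M$, whence $e^{-\beta_n^2 G^2/4}\le(1+o(1))e^{-2\pi M G^2}=(1+o(1))T^{-2\pi\log^{2\varepsilon}T}$, together with $\sum_{M<n\le N}d(n)n^{-3/4}\le4N^{1/4}\log N\le4T^{1/4}\log T$; since $(2\pi)^{1/4}\cdot4\le6.4$ and $2\pi>6.28$, this produces the exponentially small term $6.4\,T^{1/2-6.28\log^{2\varepsilon}T}\log T$ of the statement.

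Finally I would collect the three contributions and bound each $o(1)$-type remainder — the prefactor variation in $E_1^{(1)}$, the quadratic and higher phase corrections, the restriction of the Gaussian integral to $[-H,H]$, and the entire $E_1^{(2)}$ — by its value at $T=1.1T_0$; these residual numerical errors account for the constant $6.781$ in~\eqref{eq:fraka}. The one genuinely delicate point is making the truncation at $M$ quantitatively clean: one must verify that the phase of $\Sigma_1$ is linear up to a negligible error on the \emph{whole} interval $[-H,H]$ for every $n\le M$, and that the monotonicity $\beta_n\ge\beta_M$, combined with the smallness of $M^2 G^2/T$, lets one replace $e^{-\beta_n^2 G^2/4}$ by $e^{-2\pi M G^2}$ in the tail without degrading the exponent. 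Everything else is routine if lengthy computation, to be carried out in \emph{Mathematica}.
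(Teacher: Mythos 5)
Your treatment of the $\Sigma_1$-contribution and of the error term $\mathfrak{E}$ follows the paper's own route (Taylor expansion of the phase about $u=0$, exact evaluation of the full-line Gaussian integral, truncation at $M=T^{1/3}\log^{2(1+2\varepsilon)}T$, Lemma~\ref{lem:divsum2}), and you correctly recover the constants $2.111$, $25.4$, $6.4$ and the exponent $6.28\approx 2\pi$. The gap is in your dismissal of $E_1^{(2)}$, the contribution of $\Sigma_2$.

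Your argument there is that each term of $\Sigma_2\bigl((x+u)^2,\lfloor T\rfloor\bigr)$ has phase derivative $\gg\sqrt{T}$ in $u$, so the Gaussian average is exponentially small, and a trivial bound $\Sigma_2\ll\sqrt{T}\log{T}$ finishes the job. This overlooks that the summation range $n\leq\mathcal{Z}\bigl((x+u)^2,\lfloor T\rfloor\bigr)$ itself depends on $u$. To interchange sum and integral and exploit oscillation you must first fix the range at $u=0$; the terms with $n$ between $\mathcal{Z}\bigl((x-H)^2,\lfloor T\rfloor\bigr)$ and $\mathcal{Z}\bigl((x+H)^2,\lfloor T\rfloor\bigr)$ are present only for part of the $u$-interval, their partial Gaussian averages enjoy no cancellation, and they must be bounded trivially. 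This is the term $\Sigma_{25}$ in the paper: the interval of exceptional $n$ has length about $z\left(t_0\right)T^{\frac{1}{3}}\sqrt{\log{T}}$ by~\eqref{eq:interval}, and the short divisor sum of Lemma~\ref{lem:divsum1} gives $\left|\Sigma_{25}\right|\leq 6.7$ --- an $O(1)$ quantity, not an exponentially small one. In fact $6.7+0.04+0.04+0.001=6.781$ is exactly the constant in~\eqref{eq:fraka}, so the number you ascribe to ``residual numerical errors'' is almost entirely this boundary effect, which your stated mechanism cannot produce. Everything else in your proposal is sound, but without this step the bound as stated is not proved.
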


\begin{proof}
Let $\mu_1=1/2+\varepsilon$ and $t_0=10^{30}$. Then
\[
\left(1-\delta_{-}\left(t_0\right)\right)\sqrt{T}\leq x+u\leq \left(1+\delta_{+}\left(t_0\right)\right)\sqrt{T},
\]
where
\[
\delta_{\pm}(t)\de \mp 1 \pm \sqrt{1\pm t^{-\frac{2}{3}}\log{t}} + t^{-\frac{2}{3}}\sqrt{\log{t}}.
\]
This implies $A'(x+u)^2\leq \lfloor{T}\rfloor\leq A''(x+u)^2$ for
\[
A' = \left(1-\frac{1}{t_0}\right)\left(1+\delta_{+}\left(t_0\right)\right)^{-2}, \quad A'' = \left(1-\delta_{-}\left(t_0\right)\right)^{-2}.
\]
Observe that $A'\geq 0.9$ and $A''\leq 1.1$. Thus, by Theorem~\ref{thm:OurAtkinson} we have
\[
E\left((x+u)^2\right) = \Sigma_1\left((x+u)^2,\lfloor{T}\rfloor\right) + \Sigma_2\left((x+u)^2,\lfloor{T}\rfloor\right) + \mathfrak{E}\left((x+u)^2\right),
\]
where
\[
\left|\mathfrak{E}\left((x+u)^2\right)\right| \leq \mathfrak{a}_1\left(T_0\right)\log^2{T}+\mathfrak{a}_2\left(T_0\right)\log{T}+\mathfrak{a}_3\left(T_0\right).
\]
Using Taylor's expansions $\sqrt{x+u}=\sqrt{x}+\kappa_1(x,u)$, $e\left((x+u)^2,n\right)=e\left(x^2,n\right)+\kappa_2(x,n,u)$,
\[
f\left((x+u)^2,n\right) = f\left(x^2,n\right) + \left(4x\arsinh{\sqrt{\frac{\pi n}{2x^2}}}\right)u + \kappa_3(x,n)u^2 + \kappa_4(x,n,u),
\]
and $\exp{\left(\ie y\right)}=1+\kappa_5(y)$, we can write
\[
\frac{1}{G}\int_{-H}^{H}{\Sigma_1}\cdot e^{-\left(u/G\right)^2}\dif{u} = \Re\left\{\Sigma_{11}\right\} + \Re\left\{\Sigma_{12}\right\} + \Re\left\{\Sigma_{13}\right\} + \Sigma_{14} + \Sigma_{15} + \Sigma_{16},
\]
where
\[
\Sigma_{11} = \left(\frac{2}{\pi}\right)^{\frac{1}{4}}\sqrt{x}\sum_{n\leq T}(-1)^{n}\frac{d(n)}{n^{\frac{3}{4}}}e\left(x^2,n\right)e^{\ie f\left(x^2,n\right)}\frac{1}{G}\int_{-\infty}^{\infty}e^{\widehat{f}(x,n,u)}\dif{u},
\]
\[
\Sigma_{12} = -\left(\frac{2}{\pi}\right)^{\frac{1}{4}}\sqrt{x}\sum_{n\leq T}(-1)^{n}\frac{d(n)}{n^{\frac{3}{4}}}e\left(x^2,n\right)e^{\ie f\left(x^2,n\right)}\frac{1}{G}\left(\int_{-\infty}^{-H}+\int_{H}^{\infty}\right)e^{\widehat{f}(x,n,u)}\dif{u},
\]
\[
\Sigma_{13} = \left(\frac{2}{\pi}\right)^{\frac{1}{4}}\sqrt{x}\sum_{n\leq T}(-1)^{n}\frac{d(n)}{n^{\frac{3}{4}}}e\left(x^2,n\right)e^{\ie f\left(x^2,n\right)}\frac{1}{G}\int_{-H}^{H}\kappa_{5}\left(\kappa_4\right)e^{\widehat{f}(x,n,u)}\dif{u},
\]
\[
\Sigma_{14} = \left(\frac{2}{\pi}\right)^{\frac{1}{4}}\sqrt{x}\sum_{n\leq T}(-1)^{n}\frac{d(n)}{n^{\frac{3}{4}}}\frac{1}{G}\int_{-H}^{H}\kappa_2\cos{f\left((x+u)^2,n\right)}e^{-\left(u/G\right)^2}\dif{u},
\]
\[
\Sigma_{15} = \left(\frac{2}{\pi}\right)^{\frac{1}{4}}\sum_{n\leq T}(-1)^{n}\frac{d(n)}{n^{\frac{3}{4}}}\frac{e\left(x^2,n\right)}{G}\int_{-H}^{H}\kappa_1\cos{f\left((x+u)^2,n\right)}e^{-\left(u/G\right)^2}\dif{u},
\]
\[
\Sigma_{16} = \left(\frac{2}{\pi}\right)^{\frac{1}{4}}\sum_{n\leq T}(-1)^{n}\frac{d(n)}{n^{\frac{3}{4}}}\frac{1}{G}\int_{-H}^{H}\kappa_1\kappa_2\cos{f\left((x+u)^2,n\right)}e^{-\left(u/G\right)^2}\dif{u},
\]
and
\[
\widehat{f}(x,n,u) \de \ie\left(4x\arsinh{\sqrt{\frac{\pi n}{2x^2}}}\right)u - G^{-2}\left(1-\ie G^2\kappa_3(x,n)\right)u^2.
\]
We are going to estimate the moduli of the real functions $\kappa_{1},\ldots,\kappa_{5}$. We obtain
\[
\left|\kappa_1(x,u)\right| \leq m_1\left(t_0\right)T^{-\frac{5}{12}}\sqrt{\log{T}}, \quad m_1\left(t_0\right)\de \frac{1}{2\sqrt{1-\delta_{-}\left(t_0\right)}}
\]
\begin{gather*}
\left|\kappa_2(x,n,u)\right| \leq m_2\left(t_0\right)T^{-\frac{2}{3}}\sqrt{\log{T}}, \\
m_2\left(t_0\right) \de \frac{1}{1-\delta_{-}\left(t_0\right)}\sup_{0<\xi\leq \frac{\pi/2}{\left(1-\delta_{-}\left(t_0\right)\right)^{2}}}\left\{\left|\frac{\xi\sqrt{1+\xi}-\sqrt{\xi}\left(1+\frac{\xi}{2}\right)\arsinh{\sqrt{\xi}}}{\left(1+\xi\right)^{\frac{5}{4}}\arsinh^{2}{\sqrt{\xi}}}\right|\right\},
\end{gather*}
\[
\left|\kappa_3(x,n)\right| \leq m_3\left(t_0\right) \de 2\left(1+\arsinh{\sqrt{\frac{\pi}{2\left(1-\delta_{-}\left(t_0\right)\right)^2}}}\right),
\]
\[
\left|\kappa_4(x,n,u)\right| \leq m_4\left(t_0\right)T^{-1}\log^{\frac{3}{2}}{T}, \quad m_4\left(t_0\right)\de \frac{2}{3}\left(\frac{\pi}{2}\right)^{\frac{3}{2}}\left(1-\delta_{-}\left(t_0\right)\right)^{-4},
\]
and $\left|\kappa_5(y)\right|\leq |y|$ if $y\in\R$. By trivial estimations, using the above bounds together with~\eqref{eq:integral} and the first inequality from Lemma~\ref{lem:divsum2}, we obtain
\begin{flalign}
\sum_{j=2}^{6}\left|\Sigma_{1j}\right| &\leq 4\left(\frac{2}{\pi}\right)^{\frac{1}{4}}\sqrt{1+\delta_{+}\left(t_0\right)}T^{\frac{1}{2}-\log{T}} \nonumber \\
&+ 4(2\pi)^{\frac{1}{4}}\sqrt{1+\delta_{+}\left(t_0\right)}\left(m_4\left(t_0\right)T^{-\frac{1}{2}}\log^{\frac{5}{2}}{T} + m_2\left(t_0\right)T^{-\frac{1}{6}}\log^{\frac{3}{2}}{T}\right) \nonumber \\
&+ 4(2\pi)^{\frac{1}{4}}m_1\left(t_0\right)\left(T^{-\frac{1}{6}}\log^{\frac{3}{2}}{T}+m_2\left(t_0\right)T^{-\frac{5}{6}}\log^{2}{T}\right) \leq 0.02. \label{eq:sigma1215}
\end{flalign}
As before, here we used the estimate $\left|e\left(x^2,n\right)\right|<1$ for $n \leq T$.

The first term $\Sigma_{11}$ requires more careful analysis. Firstly, we have
\[
\left|\frac{1}{G}\int_{-\infty}^{\infty}e^{\widehat{f}(x,n,u)}\dif{u}\right| = \sqrt{\pi}\left(1+G^{4}\left|\kappa_3\right|^2\right)^{-\frac{1}{4}}\exp{\left(\frac{-4G^2\left(x\arsinh{\sqrt{\frac{\pi n}{2x^2}}}\right)^{2}}{1+G^{4}\left|\kappa_3\right|^2}\right)}
\]
since $\Re\left\{1-\ie G^2\kappa_3\right\}=1>0$, and we can thus use the exact form of an appropriate exponential integral, see~\cite[Equation (A.38)]{Ivic}. The idea is to split the range of summation in $\Sigma_{11}$ into two parts, namely $n\leq M$ and $M<n\leq T$ where $M$ is defined by~\eqref{eq:JutilaM}, and then using the latter equality in the second case while estimating trivially in the first case.

Let $n>M$. Then
\begin{flalign*}
4G^2\left(x\arsinh{\sqrt{\frac{\pi n}{2x^2}}}\right)^{2} &\geq 4\left(1-\delta_{-}\left(t_0\right)\right)^{2}TG^2\arsinh^{2}{\left(\sqrt{\frac{\pi}{2}}\frac{\log^{\frac{1}{2}+\varepsilon}{T}}{\left(1+\delta_{+}\left(t_0\right)\right)G\sqrt{T}}\right)} \\
&\geq \frac{4\left(1-\delta_{-}(t_0)\right)^{2}t_0^{\frac{2}{3}}\log^{1+2\varepsilon}{T}}{\log^4{t_0}}\arsinh^{2}{\left(\sqrt{\frac{\pi}{2}}\frac{\log^{2}{t_0}}{\left(1+\delta_{+}\right)t_0^{\frac{1}{3}}}\right)},
\end{flalign*}
where we used also the fact that $y^{-1}\arsinh{y}$ is a decreasing function for $y>0$. Therefore,
\[
\left|\frac{1}{G}\int_{-\infty}^{\infty}e^{\widehat{f}(x,n,u)}\dif{u}\right| \leq \sqrt{\pi}T^{-\rho_{1}\left(t_0\right)\log^{2\varepsilon}{T}},
\]
where
\[
\rho_{1}\left(t_0\right) \de \frac{4\left(1-\delta_{-}\left(t_0\right)\right)^{2}t_0^{\frac{2}{3}}\arsinh^{2}{\left(\frac{\sqrt{\pi}\log^{2}{t_0}}{\sqrt{2}\left(1+\delta_{+}\left(t_0\right)\right)t_0^{1/3}}\right)}}{\log^{4}{t_0}+m_3\left(t_0\right)^2t_0^{-\frac{2}{3}}\log^{2}{t_0}}.
\]
This implies
\begin{multline}
\label{eq:sigma11}
\left|\Sigma_{11}\right| \leq 4(2\pi)^{\frac{1}{4}}\sqrt{1+\delta_{+}\left(t_0\right)}\bigg(\frac{1}{3}T^{\frac{1}{3}}\log^{\frac{3}{2}+\varepsilon}{T} \\
+ 2\left(1+2\varepsilon\right)T^{\frac{1}{3}}\log^{\frac{1}{2}+\varepsilon}{T}\log{\log{T}}
+ T^{\frac{1}{2}-\rho_{1}\left(t_0\right)\log^{2\varepsilon}{T}}\log{T}\bigg).
\end{multline}

We need to estimate also the second term $\Sigma_{2}$. Using Taylor's expansions
\begin{gather*}
\left(\log{\frac{(x+u)^2}{2\pi n}}\right)^{-1} = \left(\log{\frac{x^2}{2\pi n}}\right)^{-1} + \kappa_{6}(x,n,u), \\
g\left((x+u)^2,n\right) = g\left(x^2,n\right) + \left(2x\log{\frac{x^2}{2\pi n}}\right)u + \kappa_{7}(x,n)u^2 + \kappa_{8}(x,n,u),
\end{gather*}
we can write
\[
\frac{1}{G}\int_{-H}^{H}\Sigma_2\cdot e^{-\left(u/G\right)^2}\dif{u} = \Re\left\{\Sigma_{21}\right\} + \Re\left\{\Sigma_{22}\right\} + \Re\left\{\Sigma_{23}\right\} + \Sigma_{24} + \Sigma_{25},
\]
where
\[
\Sigma_{21} = -2\sum_{n\leq \mathcal{Z}\left(x^2,\lfloor{T}\rfloor\right)}
\frac{d(n)}{\sqrt{n}}\left(\log{\frac{x^2}{2\pi n}}\right)^{-1}e^{\ie g\left(x^2,n\right)}\frac{1}{G}\int_{-\infty}^{\infty}e^{\widehat{g}(x,n,u)}\dif{u},
\]
\[
\Sigma_{22} = 2\sum_{n\leq \mathcal{Z}\left(x^2,\lfloor{T}\rfloor\right)}
\frac{d(n)}{\sqrt{n}}\left(\log{\frac{x^2}{2\pi n}}\right)^{-1}e^{\ie g\left(x^2,n\right)}\frac{1}{G}\left(\int_{-\infty}^{-H}+\int_{H}^{\infty}\right)e^{\widehat{g}(x,n,u)}\dif{u},
\]
\[
\Sigma_{23} = -2\sum_{n\leq \mathcal{Z}\left(x^2,\lfloor{T}\rfloor\right)}
\frac{d(n)}{\sqrt{n}}\left(\log{\frac{x^2}{2\pi n}}\right)^{-1}e^{\ie g\left(x^2,n\right)}\frac{1}{G}\int_{-H}^{H}\kappa_{5}\left(\kappa_8\right)e^{\widehat{g}(x,n,u)}\dif{u},
\]
\[
\Sigma_{24} = -2\sum_{n\leq \mathcal{Z}\left(x^2,\lfloor{T}\rfloor\right)}
\frac{d(n)}{\sqrt{n}}\frac{1}{G}\int_{-H}^{H}\kappa_{6}\cos{g\left((x+u)^2,n\right)}e^{-\left(u/G\right)^2}\dif{u},
\]
\[
\Sigma_{25} = \pm \frac{2}{G}\int_{-H}^{H}\sum_{n\in\mathscr{Z}(x,u)}\frac{d(n)}{\sqrt{n}}\left(\log{\frac{(x+u)^2}{2\pi n}}\right)^{-1}\cos{g\left((x+u)^2,n\right)}e^{-\left(u/G\right)^2}\dif{u},
\]
and
\begin{gather*}
\widehat{g}(x,n,u) \de \ie\left(2x\log{\frac{x^2}{2\pi n}}\right)u - G^{-2}\left(1-\ie G^2\kappa_{7}(x,n)\right)u^2, \\
\mathscr{Z}(x,u) \subseteq \left[\mathcal{Z}\left((x-H)^2,\lfloor{T}\rfloor\right),\mathcal{Z}\left((x+H)^2,\lfloor{T}\rfloor\right)\right]\cap\N.
\end{gather*}
In the expression for $\Sigma_{25}$, the sign $\pm$ depends on the one of $u$: we take $+$ if $u$ is positive and $-$ otherwise. By~\eqref{eq:ToverZ} we have
\begin{equation}
\label{eq:m0}
\frac{(x+u)^2}{2\pi\mathcal{Z}\left((x+u)^2,\lfloor{T}\rfloor\right)} \geq m_{0}\left(A'\right) \de 1+A'\pi\nu\left(A'\right),
\end{equation}
where $\nu(x)$ is defined by~\eqref{eq:nu}, and by~\eqref{eq:boundsZ} also
\begin{equation}
\label{eq:zpm}
z_{-}\left(A'',t_0\right)T \leq  \mathcal{Z}\left((x+u)^2,\lfloor{T}\rfloor\right) \leq z_{+}\left(A',t_0\right)T,
\end{equation}
where
\begin{equation*}
z_{-}\left(A'',t_0\right) \de \frac{\left(1-\delta_{-}\left(t_0\right)\right)^2}{\pi^2\left(A''+\frac{1}{2}\right)\nu\left(A''\right)^{2}}, \quad
z_{+}\left(A',t_0\right) \de \frac{\left(1+\delta_{+}\left(t_0\right)\right)^2}{A' \pi^2\nu\left(A'\right)^{2}}.
\end{equation*}
We need to estimate the moduli of the real functions $\kappa_{6}$, $\kappa_{7}$ and $\kappa_{8}$. For $n\leq \mathcal{Z}\left(x^2,\lfloor{T}\rfloor\right)$ we obtain
\begin{gather*}
\left|\kappa_{6}(x,n,u)\right| \leq m_6\left(A',t_0\right) T^{-\frac{2}{3}}\sqrt{\log{T}}, \\ m_6\left(A',t_0\right)\de \frac{2}{1-\delta_{-}\left(t_0\right)}\log^{-2}\left(\left(\frac{1-\delta_{-}\left(t_0\right)}{1+\delta_{+}\left(t_0\right)}\right)^{2}m_0\left(A'\right)\right),
\end{gather*}
\[
\left|\kappa_{7}(x,n)\right| \leq m_7\left(t_0\right)\log{T}, \quad m_7\left(t_0\right)\de 1+\frac{2+\log{\frac{\left(1+\delta_{+}\left(t_0\right)\right)^2}{2\pi}}}{\log{t_0}},
\]
\[
\left|\kappa_{8}(x,n,u)\right| \leq m_8\left(t_0\right)T^{-1}\log^{\frac{3}{2}}{T}, \quad m_8\left(t_0\right)\de \frac{2}{3\left(1-\delta_{-}\left(t_0\right)\right)}.
\]
Because
\begin{flalign*}
\frac{\dif{}}{\dif{x}}\mathcal{Z}\left(x^2,\lfloor{T}\rfloor\right) &= \frac{\left(2/\pi^2\right)x^3}{\lfloor{T}\rfloor+\frac{1}{2}+\frac{2x^2}{\pi}+\sqrt{\left(\lfloor{T}\rfloor+\frac{1}{2}\right)\left(\lfloor{T}\rfloor+\frac{1}{2}+\frac{2x^2}{\pi}\right)}} \\
&\leq \frac{2x^3}{\pi^2\left(2\lfloor{T}\rfloor+1\right)} \leq \frac{x^3}{\pi^2\left(1-\frac{1}{2t_0}\right)T},
\end{flalign*}
we also have
\begin{gather}
\left|\mathcal{Z}\left((x+u)^2,\lfloor{T}\rfloor\right)-\mathcal{Z}\left(x^2,\lfloor{T}\rfloor\right)\right| \leq z\left(t_0\right)T^{\frac{1}{3}}\sqrt{\log{T}}, \label{eq:interval} \\
z\left(t_0\right)\de \frac{1}{\pi^2}\left(1+\delta_{+}\left(t_0\right)\right)^{3}\left(1-\frac{1}{2t_0}\right)^{-1}. \nonumber
\end{gather}
By trivial estimations, using the latter bounds together with $z_{+}\left(A',t_0\right)\leq 1$, inequalities~\eqref{eq:integral} and~\eqref{eq:m0}, and the first inequality from Lemma~\ref{lem:divsum1}, we obtain
\begin{flalign}
\label{eq:sigma2224}
\sum_{j=2}^{4}\left|\Sigma_{2j}\right| &\leq \frac{4}{\log{m_0\left(A'\right)}}T^{\frac{1}{2}-\log{T}} \nonumber \\
&+ 4\sqrt{\pi}\left(\frac{m_8\left(t_0\right)}{\log{m_0\left(A'\right)}}T^{-\frac{1}{2}}\log^{\frac{5}{2}}{T}+m_6\left(A',t_0\right)T^{-\frac{1}{6}}\log^{\frac{3}{2}}{T}\right)\leq 0.02.
\end{flalign}
Similarly as before, we have
\begin{flalign*}
\left|\frac{1}{G}\int_{-\infty}^{\infty}e^{\widehat{g}(x,n,u)}\dif{u}\right| &= \sqrt{\pi}\left(1+G^{4}\left|\kappa_7\right|^2\right)^{-\frac{1}{4}}\exp{\left(\frac{-4G^2\left(x\log{\frac{x^2}{2\pi n}}\right)^{2}}{1+G^{4}\left|\kappa_7\right|^2}\right)} \\
&\leq \sqrt{\pi} T^{-\rho_2\left(t_0\right)T^{2/3}\log^{-3}{T}},
\end{flalign*}
where
\[
\rho_2\left(t_0\right) \de \frac{4\left(1-\delta_{-}\left(t_0\right)\right)^{2}\log^{2}{m_0\left(A'\right)}}{1+\left(m_{7}\left(t_0\right)\right)^{2}t_0^{-\frac{2}{3}}}.
\]
Therefore,
\begin{equation}
\label{eq:sigma21}
\left|\Sigma_{21}\right| \leq \frac{4\sqrt{\pi}}{\log{m_0\left(A'\right)}}T^{\frac{1}{2}-\rho_2\left(t_0\right)T^{2/3}\log^{-3}{T}}\log{T} \leq 0.001.
\end{equation}
We are left only with $\Sigma_{25}$. By the second inequality of Lemma~\ref{lem:divsum1} we have
\[
\sum_{n\in\mathscr{Z}(x,u)} \frac{d(n)}{\sqrt{n}} \leq \sum_{\mathcal{X}\leq n\leq \mathcal{Y}} \frac{d(n)}{\sqrt{n}} \leq \left(\mathcal{Y}-\mathcal{X}\right)\mathcal{X}^{-\frac{1}{2}}\left(\log{\mathcal{X}+2}\right) + \log{\sqrt{\frac{\mathcal{Y}}{\mathcal{X}}}} + 2 + \sqrt{3},
\]
where $\mathcal{X}\de \mathcal{Z}\left(\left(x-H\right)^2,\lfloor{T}\rfloor\right)$ and $\mathcal{Y}\de \mathcal{Z}\left(\left(x+H\right)^2,\lfloor{T}\rfloor\right)$. By~\eqref{eq:zpm} and~\eqref{eq:interval} we obtain
\[
\sum_{n\in\mathscr{Z}(x,u)} \frac{d(n)}{\sqrt{n}} \leq \frac{2z\left(t_0\right)}{\sqrt{z_{-}\left(A'',t_0\right)}}T^{-\frac{1}{6}}\log^{\frac{3}{2}}{T} + \log{\sqrt{\frac{z_{+}\left(A',t_0\right)}{z_{-}\left(A'',t_0\right)}}} + 2 + \sqrt{3} \leq 3.95
\]
since $2+\log{z_{+}\left(A',t_0\right)}<0$. It follows that
\begin{equation}
\label{eq:sigma25}
\left|\Sigma_{25}\right| \leq 7.9\sqrt{\pi}\log^{-1}\left(\left(\frac{1-\delta_{-}\left(t_0\right)}{1+\delta_{+}\left(t_0\right)}\right)^{2}m_0\left(A'\right)\right) \leq 6.7.
\end{equation}
The main inequality from Lemma~\ref{lem:Jutila} now easily follows from~\eqref{eq:sigma11},~\eqref{eq:sigma1215},~\eqref{eq:sigma21}, \eqref{eq:sigma2224} and~\eqref{eq:sigma25}.
\end{proof}

\begin{theorem}
\label{thm:Jutila}
Let $T_0\geq 10^{30}$. Then
\[
\left|E(T)\right| \leq \left(\frac{1}{\sqrt{\pi}}\mathfrak{a}\left(T_0\right)+1+\frac{2.001}{\log^{\frac{1}{3}}{T_0}}\right)T^{\frac{1}{3}}\log^{\frac{5}{3}}{T}
\]
for $T\geq 1.1T_0$, where $\mathfrak{a}\left(T_0\right)$ is defined by~\eqref{eq:fraka}.
\end{theorem}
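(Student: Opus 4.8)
The plan is to specialise the machinery assembled in~\eqref{eq:JutilaCond}--\eqref{eq:E} to $\varepsilon=1/6$ and to substitute the explicit bound on $E_1(x)$ furnished by Lemma~\ref{lem:Jutila}. With $\varepsilon=1/6$ one has $\mu_1=1/2+\varepsilon=2/3$, $\mu_2=\mu_1-1-2\varepsilon=-2/3$, $Y=T^{1/3}\log^{2/3}{T}$, $G=T^{-1/6}\log^{-2/3}{T}$, $H=G\log{T}=T^{-1/6}\log^{1/3}{T}$ and $M=G^{-2}\log^{1+2\varepsilon}{T}=T^{1/3}\log^{8/3}{T}$; note in particular $\mu_1-\mu_2=2>1$, $3/2+\varepsilon=5/3=\mu_1+1$ and $\mu_1-4\varepsilon=0$. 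First I would check that the admissibility conditions used along the way, namely~\eqref{eq:Jutcond1}--\eqref{eq:Jutcond2} and~\eqref{eq:Jutcon3}--\eqref{eq:Jutcon4}, hold for every $T\ge 1.1T_0\ge 1.1\cdot10^{28}$: each is an elementary inequality between powers of $T$ and of $\log{T}$, it is monotone in $T$ throughout the relevant range, and a single numerical check at $T=1.1\cdot10^{28}$ settles it; in particular $2\le t_1\le T\le t_2\le\frac{3}{2}T$, so~\eqref{eq:E} is available.

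Next I would apply~\eqref{eq:E}. Since $\mu_1=1/2+\varepsilon$, the two points $x=\sqrt{T\pm Y}$ are precisely the endpoints of the interval $\sqrt{T-T^{1/3}\log^{1/2+\varepsilon}{T}}\le x\le\sqrt{T+T^{1/3}\log^{1/2+\varepsilon}{T}}$ on which Lemma~\ref{lem:Jutila} delivers
\[
\left|E_1(x)\right| \le \mathfrak{a}(T_0)\,T^{1/3}\log^{3/2+\varepsilon}{T}+6.4\,T^{1/2-6.28\log^{2\varepsilon}{T}}\log{T}.
\]
Feeding this into~\eqref{eq:E} and factoring out $T^{1/3}\log^{5/3}{T}$ (using $3/2+\varepsilon=5/3=\mu_1+1$), the leading term becomes $\frac{1}{\sqrt{\pi}}\mathfrak{a}(T_0)$, the middle line of~\eqref{eq:E} produces the coefficient $1$ (from its bracket) plus the lower-order piece $\frac{2\sqrt{1+T^{-2/3}\log^{2/3}{T}}}{\log^{1/3}{T}}+T^{-2/3}$ (here $\log^{\mu_1-4\varepsilon}{T}=1$ and $\log^{2\varepsilon}{T}=\log^{1/3}{T}$), and there remain the contribution $\frac{6.4}{\sqrt{\pi}}\,T^{1/6-6.28\log^{1/3}{T}}\log^{-2/3}{T}$ of the tail in Lemma~\ref{lem:Jutila} and the last line of~\eqref{eq:E}, which carries the factor $T^{1/2-\log{T}}$ coming from~\eqref{eq:explIngham} and~\eqref{eq:integral}. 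Call $R(T)$ the sum of these leftover pieces.

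The third step is to bound $R(T)\le 2.001/\log^{1/3}{T_0}$ uniformly for $T\ge 1.1T_0$. For $T\ge 1.1\cdot10^{28}$ one has $\log{T}>64$, so $6.28\log^{1/3}{T}>25$ and both the exponential term and the $T^{1/2-\log{T}}$ line are smaller than $10^{-100}$; moreover $T^{-2/3}\log^{2/3}{T}<10^{-17}$, whence $2\sqrt{1+T^{-2/3}\log^{2/3}{T}}-2\le T^{-2/3}\log^{2/3}{T}<10^{-17}$. Since $T\ge T_0$ gives $\log^{1/3}{T_0}/\log^{1/3}{T}\le1$, multiplying $R(T)$ by $\log^{1/3}{T_0}$ yields at most $2+10^{-17}+\log^{1/3}{T_0}\,T^{-2/3}+10^{-100}$, and each of the small summands is decreasing in $T$ and in $T_0$, hence below $10^{-16}$; thus $\log^{1/3}{T_0}\,R(T)<2.001$. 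Adding the three contributions gives $|E(T)|\le(\frac{1}{\sqrt{\pi}}\mathfrak{a}(T_0)+1+2.001/\log^{1/3}{T_0})\,T^{1/3}\log^{5/3}{T}$, as claimed.

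The only point requiring real care is the bookkeeping in the third step: the remainder $R(T)$ gathers contributions from three different sources — the correction factor $\sqrt{1+T^{-2/3}\log^{\mu_1}{T}}$ in~\eqref{eq:E}, the exponentially small tail in Lemma~\ref{lem:Jutila}, and the Gaussian-tail line of~\eqref{eq:E} propagated from~\eqref{eq:explIngham} and~\eqref{eq:integral} — and one must confirm that their combined effect is genuinely absorbed by the slack term $2.001/\log^{1/3}{T_0}$ for all admissible $T$. This reduces to checking monotonicity in $T$ of each summand and evaluating at $T=1.1T_0$; there is no analytic obstacle, and the inequality holds with several orders of magnitude to spare. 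Verifying~\eqref{eq:Jutcond1}--\eqref{eq:Jutcon4} is of the same elementary character.
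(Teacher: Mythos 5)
Your proposal is correct and follows exactly the paper's route: set $\varepsilon=1/6$ (so $\mu_1=2/3$, $\mu_2=-2/3$), verify conditions~\eqref{eq:Jutcond1}--\eqref{eq:Jutcon4}, and feed the bound of Lemma~\ref{lem:Jutila} into inequality~\eqref{eq:E}. The only difference is that you spell out the bookkeeping that absorbs the correction factor, the exponential tail and the Gaussian-tail line into the slack term $2.001/\log^{1/3}{T_0}$, which the paper leaves implicit.
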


\begin{proof}
Let $\varepsilon=1/6$. Then $\mu_1=2/3$ and $\mu_2=-2/3$. We can see that inequalities~\eqref{eq:Jutcond1},~\eqref{eq:Jutcond2},~\eqref{eq:Jutcon3}, and~\eqref{eq:Jutcon4} are satisfied for $T\geq 10^{30}$. By Lemma~\ref{lem:Jutila}, Theorem~\ref{thm:Jutila} now follows from inequality~\eqref{eq:E}.
\end{proof}

\begin{proof}[Proof of Corollary~\ref{cor:main}]
The values for $J\left(T_0\right)$ from~\eqref{eq:ExplJut} are calculated for each $T_0$ by Theorem~\ref{thm:Jutila} according to the values from Table~\ref{tab:thm}.
\end{proof}

\subsection*{Acknowledgements} The authors thank Daniele Dona, Harald Helfgott and Sebastian Zuniga Alterman for their interest in the subject, and to Kohji Matsumoto for remarks concerning Motohashi's work, as well as to Roger Heath-Brown, Maksim Korol\"{e}v and Olivier Ramar\'{e}. Finally, the authors are grateful to the anonymous referee for providing many valuable suggestions, and to our supervisor Tim Trudgian for continual guidance and support while writing this manuscript.

%%%%%%%%%%%%%%%%%%%%%%%%%%%%%%%%%%

\providecommand{\bysame}{\leavevmode\hbox to3em{\hrulefill}\thinspace}
\providecommand{\MR}{\relax\ifhmode\unskip\space\fi MR }
% \MRhref is called by the amsart/book/proc definition of \MR.
\providecommand{\MRhref}[2]{%
  \href{http://www.ams.org/mathscinet-getitem?mr=#1}{#2}
}
\providecommand{\href}[2]{#2}

\end{document}